\setlist[enumerate]{label = (\roman*)}
\crefname{enumi}{}{}
\newlist{caseenum}{enumerate}{1}
\setlist[caseenum]{label = (\roman*)}
\crefname{caseenumi}{case}{cases}
\crefname{equation}{}{}
\Crefname{equation}{Equation}{Equations}
\theoremstyle{plain}
\newtheorem{theo}{Theorem}[section]
\theoremstyle{definition}
\newtheorem*{exa*}{Example}
\newtheorem{lem}[theo]{Lemma}
\newtheorem*{lem*}{Lemma}
\newtheorem{pro}[theo]{Proposition}
\newtheorem*{pro*}{Proposition}
\newtheorem*{cla*}{Claim}
\newtheorem*{lie*}{Lie}
\theoremstyle{remark}
\newtheorem{rem}[theo]{Remark}
\newtheorem*{rem*}{Remark}
\newtheorem*{note*}{Note}
\crefname{cor}{corollary}{corollaries}
\crefname{pro}{proposition}{propositions}
\crefname{lem}{lemma}{lemmas}
\crefname{theo}{theorem}{theorems}
\newcommand\intd{\mathop{}\!\mathrm{d}}
\newcommand\df\emph
\newcommand\ind[1]{1_{#1}}
\newcommand\tx\text
\newcommand\f\frac
\newcommand\loc{\mathrm{loc}}
\newcommand\Q{\mathcal Q}
\newcommand\B{\mathcal B}
\newcommand\C{\mathcal C}
\newcommand\F{\mathcal F}
\newcommand\G{\mathcal G}
\newcommand\avint\fint
\DeclareMathOperator\var{var}
\DeclareMathOperator\diam{diam}
\DeclareMathOperator\dist{dist}
\let\div\relax
\DeclareMathOperator\div{div}
\newcommand\M{{\mathrm M}}
\newcommand\ub\underbrace
\newcommand\la\langle
\newcommand\ra\rangle
\newcommand\comp\complement
\newcommand\mb[1]{\partial_*{#1}}
\newcommand\rb[1]{\partial^*{#1}}
\newcommand\mc[1]{\overline{#1}^*}
\newcommand\mib[1]{\mathrm{int}_*{\Bigl(#1\Bigr)}}
\newcommand\mi[1]{\mathrm{int}_*{(#1)}}
\newcommand\cl[1]{\overline{#1}}
\newcommand\ti[1]{\mathring{#1}}
\newcommand\sm[1]{\mathcal{H}^{d-1}(#1)}
\newcommand\smb[1]{\mathcal{H}^{d-1}\Bigl(#1\Bigr)}
\newcommand\lm[1]{\mathcal{L}(#1)}
\newcommand\lmb[1]{\mathcal{L}\Bigl(#1\Bigr)}
\newcommand\proj[1]{\widehat{#1}}
\begin{document}

\title{Variation of the uncentered maximal characteristic Function}
\author{Julian Weigt}
\affil{Department of Mathematics and Systems Analysis, Aalto University, Finland, \texttt{julian.weigt@aalto.fi}}
\maketitle

\begin{abstract}
Let \(\M\) be the uncentered Hardy-Littlewood maximal operator or the dyadic maximal operator and \(d\geq1\).
We prove that for a set \(E\subset\mathbb{R}^d\) of finite perimeter
the bound \(\var\M\ind E\leq C_d\var\ind E\) holds. 
We also prove this for the local maximal operator.

\end{abstract}

\begingroup
\renewcommand\thefootnote{}\footnotetext{%
2020 \textit{Mathematics Subject Classification.} 42B25,26B30.\\%
\textit{Key words and phrases.} Maximal function, variation, dyadic cubes.%
}%
\addtocounter{footnote}{-1}%
\endgroup

\section*{Introduction}

The uncentered Hardy-Littlewood maximal function of a non-negative locally integrable function \(f\) is given by
\[\M f(x)=\sup_{B\ni x}\f1{\lm{B}}\int_B f,\]
where the supremum is taken over all open balls \(B\subset\mathbb{R}^d\) that contain \(x\).
Various versions of this maximal operator have been investigated. 
There is the (centered) Hardy-Littlewood maximal operator, where the supremum is taken only over those balls that are centered in \(x\), 
or the dyadic maximal operator which maximizes over dyadic cubes instead of balls.
Those operators also have local versions, where for some open set \(\Omega\subset\mathbb{R}^d\) the supremum is taken only over those balls or cubes that are contained in \(\Omega\).
For example the local dyadic maximal function with respect to \(\Omega\) of \(f\in L^1_\loc(\Omega)\) at \(x\in\Omega\) is given by
\[\M f(x)=\sup_{x\in Q\subset \Omega}\f1{\lm{Q}}\int_Q f,\]
where the supremum is taken over all half open dyadic cubes \(Q\subset\mathbb{R}^d\) with \(x\in Q\subset\Omega\).

It is well known that many maximal operators are bounded on \(L^p(\mathbb{R}^d)\) if and only if \(p>1\). 
The regularity of the maximal operator was first studied in \cite{MR1469106}, 
where Kinnunen proved for the Hardy-Littlewood maximal operator that for \(p>1\) and \(f\in W^{1,p}(\mathbb{R}^d)\) also the bound
\[\|\nabla\M f\|_p\leq C_{d,p}\|\nabla f\|_p\]
holds, from which it follows that the Hardy-Littlewood maximal operator is bounded on \(W^{1,p}(\mathbb{R}^d)\). 
The proof combines the point-wise bound \(|\nabla\M f|\leq\M|\nabla f|\) with the \(L^p(\mathbb{R}^d)\)-bound of the maximal operator.
Since the maximal operator is not bounded on \(L^1(\mathbb{R}^d)\), this approach fails for \(p=1\).
For \(p>1\) the gradient \(L^p(\mathbb{R}^d)\)-bound or some corresponding version is valid for most maximal operators.
However so far no counterexamples have been found for \(p=1\). 
So in 2004, Haj\l{}asz and Onninen posed the following question in \cite{MR2041705}:
For the Hardy-Littlewood maximal operator \(\M\), is \(f\mapsto|\nabla\M f|\) a bounded mapping \(W^{1,1}(\mathbb{R}^d)\rightarrow L^1(\mathbb{R}^d)\)? 
This question for various maximal operators has since become a well known problem
and has been the subject of lots of research.
In one dimension for \(L^1(\mathbb{R})\) the gradient bound has already been proven in \cite{MR1898539} by Tanaka for the uncentered maximal function,
and later in \cite{MR3310075} by Kurka for the centered Hardy-Littlewood maximal function. 
The latter proof turned out to be much more complicated. 
In \cite{MR3800463} Luiro has proven the gradient bound for radial functions in \(L^1(\mathbb{R}^d)\) for the uncentered maximal operator.
More research on this question, 
and also more generally on the endpoint regularity of maximal operators 
can be found in
\cite{MR2868961,MR2276629,MR2356061,MR3091605,MR3624402,MR3695894,MR2550181,MR3592548}.
However, so far the question has been essentially unsolved in dimensions larger than one for any maximal operator.

In this paper we prove that for \(\M\) being the dyadic or the uncentered Hardy Littlewood maximal operator
and \(E\subset\mathbb{R}^d\) being a set with finite perimeter, we have
\[\var\M\ind E\leq C_d\var\ind E.\]
This answers the question of Haj\l{}asz and Onninen in a special case,
and is the first truly higher dimensional result for \(p=1\) to the best of our knowledge.
We furthermore prove a localized version, as is stated in \Cref{theo_goaldyadic,theo_goal}.
The Hardy-Littlewood uncentered maximal function and the dyadic maximal function have in common
that their levels sets \(\{\M f>\lambda\}\) can be written as
the union of all balls/dyadic cubes \(X\) with \(\int_X f>\lambda\lm X\).
Our proof relies on this.
Since this is not true for the centered Hardy-Littlewood maximal function,
a different approach has to be found for that maximal operator.

Also related topics for various exponents \(1\leq p\leq\infty\) have been studied,
such as the continuity of the maximal operator in Sobolev spaces \cite{MR1724375}
and bounds for the gradient of other maximal operators, such as fractional, convolution, discrete, local and bilinear maximal operators
\cite{MR3809456,MR2431055,MR3063097,MR3319617,MR1650343,MR1979008,2017arXiv171007233L,2019arXiv190904375R}.

I would like to thank my supervisor, Juha Kinnunen for all of his support,
Panu Lahti for discussions on the theory of sets of finite perimeter,
his suggested proof of \Cref{cla_largeboundaryinball},
and repeated reading of and advice on the manuscript,
and Carlos Mudarra for discussions on the proof of \Cref{cla_surfacedistanceboxingball}.
I am indebted to the anonymous referees for their careful reading and the large amount of improvements they suggested both in style and in mathematical substance to the paper.
The author has been supported by the Vilho, Yrj\"o and Kalle V\"ais\"al\"a Foundation of the Finnish Academy of Science and Letters.

\section{Preliminaries and main result}\label{sec_preliminaries}

We work in the setting of sets of finite perimeter, as in Evans-Gariepy \cite{MR3409135}, Section~5. 
For a measurable set \(E\subset\mathbb{R}^d\) we denote by \(\lm E\) its Lebesgue measure
and by \(\sm E\) its \(d-1\)-dimensional Hausdorff measure.
For an open set \(\Omega\subset\mathbb{R}^d\), a function \(f\in L^1_\loc(\Omega)\) is said to have locally bounded variation
if for each open and compactly supported \(U\subset \Omega\) we have
\[\sup\Bigl\{\int_Uf\div\varphi:\varphi\in C^1_{\tx c}(U;\mathbb{R}^d),\ |\varphi|\leq1\Bigr\}<\infty.\]
Such a function comes with a measure \(\mu\) and a function \(\nu:\Omega\rightarrow\mathbb{R}^d\) that has \(|\nu|=1\) \(\mu\)-a.e.\ such that for all \(\varphi\in C^1_{\tx c}(\Omega;\mathbb{R}^d)\) we have
\[\int_\Omega f\div\varphi=\int_\Omega\varphi\nu\intd \mu.\]
We define the variation of \(f\) in \(\Omega\) by
\[\var_\Omega f=\mu(\Omega).\]
For a measurable set \(E\subset\mathbb{R}^d\) we define the measure theoretic boundary by
\[\mb E=\Bigl\{x:\limsup_{r\rightarrow0}\f{\lm{B(x,r)\setminus E}}{r^d}>0,\ \limsup_{r\rightarrow0}\f{\lm{B(x,r)\cap E}}{r^d}>0\Bigr\}.\]
The following coarea formula is our strategy to approach the variation of the maximal function.

\begin{lem}[Theorem~5.9 in \cite{MR3409135}]\label{lem_coareabv}
Let \(\Omega\subset\mathbb{R}^d\) be open. 
Let \(f\in L^1_\loc(\Omega)\). 
Then
\[\var_\Omega f=\int_\mathbb{R}\sm{\mb{\{f>\lambda\}\cap \Omega}}\intd\lambda.\]
\end{lem}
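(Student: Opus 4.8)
The plan is to prove the two inequalities separately, using throughout that for any set \(F\) the perimeter \(\var_\Omega\ind F\) equals \(\sm{\mb F\cap\Omega}\) — by De~Giorgi's structure theorem together with Federer's theorem \(\sm{\mb F\setminus\rb F}=0\) — and that, by Federer's criterion, both quantities are \(+\infty\) simultaneously, so that no finiteness hypothesis on \(\var_\Omega f\) is needed and finiteness of the right-hand integral for a.e.\ \(\lambda\) will come out of the proof rather than go into it. Since \(\Omega\) is open, \(\mb{\{f>\lambda\}\cap\Omega}\) agrees inside \(\Omega\) with \(\mb{\{f>\lambda\}}\cap\Omega\), and I will freely pass between the two.

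For the bound \(\var_\Omega f\le\int_\mathbb{R}\sm{\mb{\{f>\lambda\}\cap\Omega}}\intd\lambda\) I would start from the layer-cake identity
\[f(x)=\int_0^\infty\ind{\{f>\lambda\}}(x)\intd\lambda-\int_{-\infty}^0\ind{\{f\le\lambda\}}(x)\intd\lambda,\]
valid for a.e.\ \(x\) with absolutely convergent right-hand side. Inserting it into \(\int_\Omega f\div\varphi\) for an arbitrary \(\varphi\in C^1_{\tx c}(\Omega;\mathbb{R}^d)\) with \(|\varphi|\le1\) and applying Fubini (legitimate since \(\div\varphi\) has compact support and \(f\in L^1_\loc(\Omega)\)), then using \(\int_\Omega\div\varphi=0\) to replace \(\ind{\{f\le\lambda\}}\) by \(-\ind{\{f>\lambda\}}\) in the second term, one obtains \(\int_\Omega f\div\varphi=\int_\mathbb{R}\bigl(\int_\Omega\ind{\{f>\lambda\}}\div\varphi\bigr)\intd\lambda\). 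Bounding the inner integral by \(\var_\Omega\ind{\{f>\lambda\}}=\sm{\mb{\{f>\lambda\}\cap\Omega}}\) and taking the supremum over \(\varphi\) yields the bound; measurability of \(\lambda\mapsto\sm{\mb{\{f>\lambda\}\cap\Omega}}\) follows from monotonicity of the superlevel sets and lower semicontinuity of the perimeter.

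For the reverse inequality I would argue by smooth approximation. If \(f\in C^\infty(\Omega)\), the classical coarea formula gives \(\var_\Omega f=\int_\Omega|\nabla f|=\int_\mathbb{R}\sm{f^{-1}(\lambda)\cap\Omega}\intd\lambda\), and by Sard's theorem for a.e.\ \(\lambda\) the level set \(f^{-1}(\lambda)\) is a smooth hypersurface coinciding \(\mathcal H^{d-1}\)-a.e.\ with \(\mb{\{f>\lambda\}}\), so the formula holds for such \(f\). For general \(f\) of locally bounded variation, exhaust \(\Omega\) by open sets \(U\Subset\Omega\) and mollify to get \(f_k\in C^\infty\) with \(f_k\to f\) in \(L^1_\loc(\Omega)\) and, crucially, \(\var_\Omega f_k\to\var_\Omega f\) (strict convergence of mollifications). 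From \(\int_\mathbb{R}\lm{(\{f_k>\lambda\}\sdiff\{f>\lambda\})\cap U}\intd\lambda=\int_U|f_k-f|\to0\) one gets, along a subsequence, \(\ind{\{f_k>\lambda\}}\to\ind{\{f>\lambda\}}\) in \(L^1_\loc(\Omega)\) for a.e.\ \(\lambda\), whence \(\sm{\mb{\{f>\lambda\}\cap\Omega}}\le\liminf_k\sm{\mb{\{f_k>\lambda\}\cap\Omega}}\) for a.e.\ \(\lambda\) by lower semicontinuity of the perimeter. Fatou's lemma and the smooth case then give
\[\int_\mathbb{R}\sm{\mb{\{f>\lambda\}\cap\Omega}}\intd\lambda\le\liminf_k\int_\mathbb{R}\sm{\mb{\{f_k>\lambda\}\cap\Omega}}\intd\lambda=\liminf_k\var_\Omega f_k=\var_\Omega f,\]
which together with the first inequality finishes the proof.

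The main obstacle is the reverse inequality, and within it the limiting procedure: one needs \emph{strict} convergence \(\var_\Omega f_k\to\var_\Omega f\) of the mollifications (lower semicontinuity alone would go the wrong way) and the a.e.-\(\lambda\) \(L^1_\loc\)-convergence of the superlevel sets, so that lower semicontinuity of perimeter can be applied levelwise before integrating in \(\lambda\). By comparison, the measurability of \(\lambda\mapsto\sm{\mb{\{f>\lambda\}\cap\Omega}}\), the identification of the perimeter with \(\mathcal H^{d-1}\) of the measure-theoretic boundary, and the reduction from \(\Omega\) to relatively compact pieces are routine.
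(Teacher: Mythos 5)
The paper gives no proof of this lemma: it is quoted directly as Theorem~5.9 of Evans--Gariepy \cite{MR3409135} (together with the identification \(\var_\Omega\ind F=\sm{\mb F\cap\Omega}\) via the structure theorem and Federer's criterion, which you also invoke), so the only comparison available is with the standard textbook argument --- and that is exactly what you reproduce: the layer-cake decomposition plus the dual definition of the variation for ``\(\leq\)'', and approximation with convergent total variations, levelwise lower semicontinuity of perimeter along a subsequence, and Fatou for ``\(\geq\)''. Your proof is correct; the only deviations from the cited proof are cosmetic (smooth approximants handled via Sard and the classical coarea formula, where Evans--Gariepy use a piecewise-linear step), with the minor caveat that ``strict convergence of mollifications'' on all of \(\Omega\) really means the standard partition-of-unity approximation theorem, since plain mollification is only available on compactly contained subsets --- a technicality you have effectively covered by exhausting \(\Omega\) by sets \(U\Subset\Omega\).
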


We say that measurable set \(E\subset\mathbb{R}^d\) has locally finite perimeter
if its characteristic function \(\ind E\) has locally bounded variation.
For \(f=\ind E\) we call \(\var_\Omega\ind E\) the perimeter of \(E\)
and \(\nu\) from above the outer normal of \(E\).
\Cref{lem_coareabv} implies
\[\var_\Omega\ind E=\sm{\mb E\cap \Omega}.\]
Recall the definition of the set of dyadic cubes
\[\bigcup_{n\in\mathbb{Z}}\{[x_1,x_1+2^n)\times\ldots\times[x_d,x_d+2^n):i=1,\ldots,n,\ x_i\in2^n\mathbb{Z}\}.\]
The maximal function of a characteristic function can be written as
\[\M \ind E(x)=\sup_{x\in X\subset \Omega}\f{\lm{E\cap X}}{\lm{X}},\]
where \(X\) ranges over balls for the uncentered maximal operator,
and over dyadic cubes for the dyadic maximal operator.
Now we are ready to state the main results of this paper.

\begin{theo}\label{theo_goaldyadic}
Let \(\M\) be the local dyadic maximal operator with respect to an open set \(\Omega\subset\mathbb{R}^d\).
Let \(E\subset\mathbb{R}^d\) be a set with locally finite perimeter. 
Then
\[\var_\Omega\M\ind E\leq C_d\sm{\mb E\cap \Omega},\]
where \(C_d\) depends only on the dimension \(d\).
\end{theo}

\begin{theo}\label{theo_goal}
Let \(\M\) be the local uncentered maximal operator with respect to an open set \(\Omega\subset\mathbb{R}^d\).
Let \(E\subset\mathbb{R}^d\) be a set with locally finite perimeter. 
Then
\[\var_\Omega\M\ind E\leq C_d\sm{\mb E\cap \Omega},\]
where \(C_d\) depends only on the dimension \(d\).
\end{theo}

We can for example take \(\Omega=\mathbb{R}^d\).
Denote \(\{\M\ind E>\lambda\}=\{x\in\Omega:\M\ind E(x)>\lambda\}\).
We reduce \Cref{theo_goaldyadic,theo_goal} to the following results.

\begin{pro}\label{eq_levelsetsdyadic}
Let \(\M\) be the local dyadic maximal operator with respect to some open set \(\Omega\subset\mathbb{R}^d\).
Let \(E\subset\mathbb{R}^d\) be a set with locally finite perimeter and let \(\lambda\in(0,1)\). 
Then 
\[\sm{\mb{\{\M\ind E> \lambda\}}\cap \Omega}\leq C_d\lambda^{-\f{d-1}d}\sm{\mb E\cap \Omega}.\]
\end{pro}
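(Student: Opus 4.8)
The plan is the one that is natural for estimates of this type: realise the superlevel set as a union of dyadic cubes, pass to the maximal ones, and bound the perimeter cube by cube through the relative isoperimetric inequality in a cube — which is precisely what produces the factor $\lambda^{-(d-1)/d}$.

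\textbf{Reduction to maximal cubes and exposed faces.} Put $U\seq\{\M\ind E>\lambda\}=\bigcup\{Q\text{ dyadic}:Q\subseteq\Omega,\ \lm{E\cap Q}>\lambda\lm Q\}$ and let $\mathcal Q$ be the family of cubes that are maximal with respect to inclusion in this union. Since two dyadic cubes are either nested or disjoint, the cubes in $\mathcal Q$ are pairwise disjoint and $U=\bigsqcup_{Q\in\mathcal Q}Q$. A point in the interior of a cube of $\mathcal Q$, and a point in the relative interior of a face $F$ of such a cube $Q$ whose equal-sized dyadic neighbour $P_F$ across $F$ also lies in $U$, are density points of $U$; moreover $P_F\subseteq\Omega$ together with $\lm{E\cap P_F}>\lambda\lm{P_F}$ already forces $P_F\subseteq U$. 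Hence, up to an $\mathcal H^{d-1}$-null set, $\mb U\cap\Omega$ is contained in the union of the faces $F\cap\Omega$, $Q\in\mathcal Q$, for which $P_F\not\subseteq U$, and it is enough to estimate $\sum\sm{F\cap\Omega}$ over these ``exposed'' faces.

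\textbf{The per-cube estimate.} Let $F$ be an exposed face of $Q\in\mathcal Q$, of side length $\ell$. If $\lm{Q\setminus E}\ge\lambda\lm Q$, then $\min(\lm{E\cap Q},\lm{Q\setminus E})\ge\lambda\lm Q$ and the relative isoperimetric inequality in the cube $Q$ gives at once
\[\sm F=\ell^{d-1}\le C_d\lambda^{-\f{d-1}d}\,\smb{\mb E\cap Q^\circ}.\]
In the complementary case $\lm{E\cap Q}>(1-\lambda)\lm Q$ the cube $Q$ is almost filled by $E$ and the inequality inside $Q$ is useless; one must pass to a larger set. If the dyadic parent $\hat Q$ of $Q$ lies in $\Omega$, then maximality of $Q$ gives $\lm{E\cap\hat Q}\le\lambda\lm{\hat Q}$, and the relative isoperimetric inequality in $\hat Q$ yields $\ell^{d-1}\le C_d(\min(\lambda,1-\lambda))^{-(d-1)/d}\smb{\mb E\cap\hat Q^\circ}$; and one checks that this subcase with $\hat Q\subseteq\Omega$ \emph{cannot occur at all} once $\lambda\le(2^d+1)^{-1}$, since then the averages of $\ind E$ on $Q$ and on $\hat Q$ would contradict maximality. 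Summing the first display over the (at most $2d$ per cube) exposed faces, and using that the cubes $Q$, hence the sets $Q^\circ$, are pairwise disjoint, contributes at most $C_d\lambda^{-(d-1)/d}\smb{\mb E\cap\Omega}$; so for $\lambda\le(2^d+1)^{-1}$ this settles all exposed faces of cubes with $\hat Q\subseteq\Omega$.

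\textbf{The main obstacle.} What is left, and what I expect to be the genuine difficulty, are (i) the exposed faces $F$ of cubes $Q$ that abut $\partial\Omega$ (so $\hat Q\not\subseteq\Omega$ and the parent argument is unavailable), and (ii) the regime $\lambda$ close to $1$, where the bound above degenerates to a power of $(1-\lambda)^{-1}$ rather than the claimed $\lambda^{-(d-1)/d}\le C_d$. In both situations every cube under consideration is nearly filled by $E$, so the pointwise loss of $1-\lambda$ is unavoidable by cube-by-cube relative isoperimetry, and one must bring in that only exposed faces count: if $x\in F\cap\Omega$ lies in $\mb U$, then since $x$ has $\Omega$-density $1$ the mass of $U^c$ that witnesses $x\in\mb U$ lies in $\Omega$, and because $\{y\in\Omega:\M\ind E(y)\le\lambda\}$ consists, up to an $\mathcal L^d$-null set, of points of density $0$ of $E$, there is $E^c$-mass of full density accumulating at $x$; together with the $E$-mass carried by the nearly filled cube $Q$ on the other side of $F$, this should confine $x$ to a controlled neighbourhood of $\mb E$, and a bounded-overlap covering of the exposed faces by such configurations converts it into the required bound. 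Everything outside these two cases is the relative isoperimetric inequality plus routine dyadic bookkeeping.
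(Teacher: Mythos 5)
Your skeleton — disjoint maximal dyadic cubes plus a per-cube relative isoperimetric estimate producing \(\lambda^{-\f{d-1}d}\) — is the same as the paper's, but the proof is not complete: the cases you yourself flag as ``the main obstacle'' are exactly the cases the paper has to work for, and your sketch for them (density considerations should ``confine \(x\) to a controlled neighbourhood of \(\mb E\)'' plus an unspecified bounded-overlap covering) is not an argument. Concretely, your per-cube bound only works when \(\lm{Q\setminus E}\geq\lambda\lm Q\); whenever \(\lambda>(2^d+1)^{-1}\), or whenever the parent of \(Q\) leaves \(\Omega\), you are left with cubes nearly filled by \(E\), where \(\mb E\cap Q\) can even be empty, so no isoperimetric inequality inside \(Q\) (or inside \(\hat Q\), whose use moreover degenerates like \((1-\lambda)^{-\f{d-1}d}\) and also creates an unbounded-overlap problem, since arbitrarily many parents of distinct maximal cubes can contain a common point) can give the claim. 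The paper's resolution is \Cref{lem_varav}: one only estimates \(\sm{\partial Q\setminus\mc E}\), i.e.\ the part of \(\partial Q\) at which \(E\) has density zero, and \Cref{cla_surfacedistanceboxingball} covers this set by balls \(C\) centered at such points in which a continuity (stopping-radius) argument, combined with the relative isoperimetric inequality on the John domain \(C\cap Q\cap\{\dist(\cdot,Q^\comp)\gtrsim\lambda\diam C\}\) (\Cref{cla_isoperimetricballintersection,lem_isoperimetricconsequence}), yields \(\sm{\mb E\cap C\cap\ti Q}\gtrsim\lambda^{\f{d-1}d}\sm{\partial C}\); a Vitali selection (\Cref{lem_disjointcover}) then sums these to \(\sm{\partial Q\setminus\mc E}\lesssim\lambda^{-\f{d-1}d}\sm{\mb E\cap\ti Q}\), uniformly in \(\lambda\in(0,1)\) and with no reference to the parent cube or to \(\partial\Omega\). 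The part of \(\partial Q\) inside \(\mc E\) is never estimated cube-wise; it is absorbed into the extra term \(\sm{\mb E\cap\Omega}\) produced by \Cref{lem_boundaryofunion} applied to \(\bigcup\Q_n\cup E\). This mechanism is the missing idea in your proposal.

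A secondary but genuine gap is your reduction ``up to an \(\H^{d-1}\)-null set, \(\mb U\cap\Omega\) is contained in the union of the exposed faces'': for the full (countably infinite) family of maximal cubes this is false. Take \(\Omega=\mathbb{R}^d\), \(E=\{x_d<2/3\}\) and \(\lambda=3/4\); then \(\{\M\ind E>\lambda\}=\{x_d<2/3\}\), its measure-theoretic boundary is the non-dyadic hyperplane \(\{x_d=2/3\}\), and this hyperplane meets no face of any maximal cube — the maximal cubes merely accumulate on it. So positive \(\H^{d-1}\)-measure of \(\mb U\) can live off all cube faces, and your face-by-face accounting misses it. The paper avoids this by working with finite subfamilies \(\Q_n\), passing to the limit with lower semicontinuity of perimeter (\Cref{lem_l1approx}) applied to \(\bigcup\Q_n\cup E\) (using \Cref{lem_finMf} to know this stays inside the level set), at the price of the harmless additional summand \(\sm{\mb E\cap\Omega}\) in \cref{eq_summandtoomany}; some such approximation or an added \(\mb E\)-term is needed to make your first step correct.
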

By \Cref{lem_finMf} we have \(\mc E\cap\Omega\subset\mc{\{\M\ind E>\lambda\}}\) so that we might intersect the right-hand side with \(\mc{\{\M\ind E>\lambda\}}\).

\begin{pro}\label{eq_levelsets}
Let \(\M\) be the local uncentered maximal operator. 
Let \(E\subset\mathbb{R}^d\) be a set with locally finite perimeter and let \(\lambda\in(0,1)\). 
Then 
\[\sm{\mb{\{\M\ind E> \lambda\}}\cap\Omega}\leq C_d\lambda^{-\f{d-1}d}(1-\log\lambda)\sm{\mb E\cap\{\M\ind E>\lambda\}}.\]
\end{pro}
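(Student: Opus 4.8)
The plan is to estimate $\mathcal{H}^{d-1}$ of the measure theoretic boundary of the superlevel set $U_\lambda \seq \{\M\ind E>\lambda\}$ by covering $\mb{U_\lambda}$ with balls adapted to the geometry of the problem. Recall that $U_\lambda$ is a union of open balls $B$ with $\lm{E\cap B}>\lambda\lm B$; call such a ball \emph{admissible}. For a boundary point $x\in\mb{U_\lambda}\cap\Omega$, either $x$ lies in the closure of $E$ itself, or $x$ has a neighborhood disjoint from the interior-density part of $E$ and is reached only through admissible balls of positive radius. The first case contributes at most $\sm{\mb E\cap\{\M\ind E>\lambda\}}$, which is exactly the quantity on the right-hand side, so the work is in the second case. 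The idea is a Vitali/Besicovitch-type selection: around each such $x$ choose an admissible ball $B_x$ witnessing $\M\ind E(x)>\lambda$, note that $B_x$ must "see" a substantial piece of $\mb E$ — quantitatively, the relative isoperimetric inequality applied inside $B_x$ gives
\[
\min\{\lm{E\cap B_x},\lm{B_x\setminus E}\}^{\frac{d-1}{d}}\leq c_d\,\sm{\mb E\cap B_x},
\]
and since $\lm{E\cap B_x}>\lambda\lm{B_x}$ while, for $x$ on the boundary of $U_\lambda$, a comparable ball around $x$ is \emph{not} fully inside $U_\lambda$ and hence $\lm{B_x\setminus E}$ is also bounded below by a dimensional multiple of $\lambda\lm{B_x}$ (otherwise $B_x$ would certify $\M\ind E>\lambda$ on a whole neighborhood of $x$). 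This yields the lower bound $r(B_x)^{d-1}\lesssim_d \lambda^{-\frac{d-1}{d}}\sm{\mb E\cap B_x}$.

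Next I would pass from this pointwise ball-by-ball bound to a global one. Cover $\mb{U_\lambda}\cap\Omega\setminus\mc E$ by the balls $\{B_x\}$, extract a bounded-overlap subfamily by the Besicovitch covering theorem, and sum: $\sm{\mb{U_\lambda}\cap\Omega}\lesssim_d\sum_i r(B_{x_i})^{d-1}\lesssim_d\lambda^{-\frac{d-1}{d}}\sum_i\sm{\mb E\cap B_{x_i}}\lesssim_d\lambda^{-\frac{d-1}{d}}\sm{\mb E\cap\{\M\ind E>\lambda\}}$, where in the last step I use that each $B_{x_i}\subset\{\M\ind E>\lambda\}$ (being admissible) together with the bounded overlap. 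This is where the logarithmic factor $(1-\log\lambda)$ should enter: the balls $B_{x_i}$ come in a range of dyadic scales, and a single Besicovitch extraction across all scales does not control the overlap of the \emph{portions of $\mb E$} they capture — balls at very different radii through the same boundary point of $E$ can all be admissible. Grouping the selected balls into $O(1-\log\lambda)$ scale bands (the number of relevant scales is governed by $\lambda$ via the isoperimetric constraint $r(B_x)^{d-1}\lesssim\lambda^{-(d-1)/d}\sm{\mb E\cap B_x}$ and a matching upper bound on $r(B_x)$ coming from $x\notin\mc E$), applying bounded-overlap within each band, and summing the bands produces the stated $\lambda^{-\frac{d-1}{d}}(1-\log\lambda)$.

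The main obstacle I anticipate is precisely this multi-scale bookkeeping: controlling how many genuinely different scales of admissible balls can accumulate at a single boundary point of $U_\lambda$, and showing the total $\mathcal{H}^{d-1}$-mass of $\mb E$ is not overcounted more than $O(1-\log\lambda)$ times. Making the relative isoperimetric step rigorous also requires care, since it must be applied on the ball $B_x$ and one needs $\lm{B_x\setminus E}$ bounded below — this in turn forces a careful choice of $B_x$ (e.g. the supremum defining $\M$ need not be attained, so one takes a near-optimal admissible ball and slightly enlarges it, or uses a ball that straddles $\mb{U_\lambda}$ at $x$). A secondary technical point is replacing $\Omega$-truncation issues: one should localize all covering arguments to $\Omega$ and use that admissibility for the local maximal operator only involves balls inside $\Omega$, so $B_x\subset\Omega$ automatically and no boundary-of-$\Omega$ effects arise in the sum.
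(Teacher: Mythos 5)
Your high-level plan (cover the boundary of \(\{\M\ind E>\lambda\}\), use a relative isoperimetric inequality to convert the surface of each covering ball into a piece of \(\sm{\mb E\cap\cdot}\), then control overlap across \(O(1-\log\lambda)\) scales) is the right flavor, but two of its load-bearing steps do not hold as stated. First, the covering bound \(\sm{\mb{\{\M\ind E>\lambda\}}\cap\Omega}\lesssim\sum_i r(B_{x_i})^{d-1}\) is invalid: a cover by balls of uncontrolled radii bounds only the Hausdorff \emph{content} \(\H^{d-1}_\infty\), not the measure \(\H^{d-1}\); the portion of \(\mb{\{\M\ind E>\lambda\}}\) inside a single covering ball can a priori carry far more than \(r^{d-1}\) of surface measure. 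The paper avoids this by first reducing, via Lindel\"of, \Cref{lem_l1approx,lem_boundaryofunion,lem_finMf}, to a \emph{finite} union \(\bigcup\B\) of admissible balls, and then proving the genuinely geometric fact that inside a ball \(C\) the boundary of a union of much larger balls is a bounded number of \(1\)-Lipschitz graphs, so \(\sm{\mb{\bigcup\B}\cap 5C}\lesssim\sm{\partial C}\) (\Cref{cla_largeboundaryinball}); nothing in your argument substitutes for this step.

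Second, the ball selection and the isoperimetric input are not justified. Since \(\{\M\ind E>\lambda\}\) is open, a boundary point \(x\) lies in \emph{no} admissible ball, and the claimed lower bound \(\lm{B_x\setminus E}\gtrsim\lambda\lm{B_x}\) does not follow from ``otherwise \(B_x\) certifies \(\M\ind E>\lambda\) near \(x\)'': an admissible ball certifies this only on itself, which need not contain \(x\), and admissible balls reaching up to the boundary can be almost entirely filled by \(E\) (for instance balls contained in \(\mi E\)). The paper's replacement is \Cref{cla_surfacedistanceboxingball}: center a small ball at a point of \(\partial X\setminus\mc E\) for each admissible \(X\), observe that the \(E\)-density of the trimmed region \(A(r)=B(x,r)\cap\{y:\dist(y,X^\comp)>c_d\lambda r\}\) is at least \(\lambda/2\) at scale \(\diam X\) and tends to \(0\) as \(r\to0\), pick by continuity a scale where it equals \(\lambda/2\), and apply the relative isoperimetric inequality in the John domain \(A(r)\) (\Cref{cla_isoperimetricballintersection,lem_isoperimetricconsequence}). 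Finally, your source of the factor \(1-\log\lambda\) is asserted rather than proved: you give no mechanism preventing admissible balls of unboundedly many scales from charging the same piece of \(\mb E\). In the paper this is exactly what the trimming buys: the piece of \(\mb E\) charged at scale \(2^n\) lies at distance between \(\approx\lambda 2^n\) and \(\approx 2^n\) from \((\bigcup\B)^\comp\) (\Cref{cla_surfacedistanceboxing}, \Cref{it_enoughvardist}), so each point of \(\mb E\) is charged by at most \(\sim\log(1/\lambda)+C_d\) scales. Without such a depth tag, the logarithmic overlap bound — and hence the whole estimate — remains unsupported.
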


The constants \(C_d\) that appear in \Cref{theo_goal,theo_goaldyadic} and \Cref{eq_levelsetsdyadic,eq_levelsets} are not equal.
Since the proofs of \Cref{theo_goaldyadic,theo_goal} are almost the same we do them simultaneously.

\begin{proof}[Proof of \Cref{theo_goaldyadic,theo_goal}]
By \Cref{lem_coareabv} and \Cref{eq_levelsetsdyadic,eq_levelsets} we have
\begin{align*}
\var_\Omega\M\ind E&=\int_0^1\sm{\mb{\{\M\ind E> \lambda\}}\cap \Omega}\intd\lambda\\
&\leq C_d\int_0^1\lambda^{-\f{d-1}d}(1-\log\lambda)\sm{\mb E\cap \Omega}\intd\lambda\\
&=d(d+1)C_d\sm{\mb E\cap \Omega}.
\end{align*}
\end{proof}

In \Cref{sec_both,sec_dyadic,sec_uncentered} we prove \Cref{eq_levelsetsdyadic,eq_levelsets}.
In \Cref{sec_optimal} we prove \Cref{eq_levelsets_optimal} which is \Cref{eq_levelsets} without the factor \(1-\log\lambda\).
The rate \(\lambda^{-\f{d-1}d}\) is optimal.

We introduce some notation we will use throughout the paper.
By \(a\lesssim b\) we mean that there exists a constant \(C_d\) that depends only on the dimension \(d\) such that \(a\leq C_d b\). 
For a set \(\B\) of subsets of \(\mathbb{R}^d\) we write
\[\bigcup\B=\bigcup_{B\in\B}B.\]
For a ball \(B=B(x,r)\subset\mathbb{R}^d\) and \(c>0\) we denote
\(cB=B(x,cr)\).
If \(\B\) is a set of balls we denote
\[c\B=\{cB:B\in\B\}.\]
For a set \(E\subset\mathbb{R}^d\) and a point \(x\in\mathbb{R}^d\) we denote
\[
\dist(x,E)
=
\inf_{y\in E}|x-y|
.
\]
We also need more measure theoretic quantities.
We define the measure theoretic interior by
\begin{align*}
\mi E&=\Bigl\{x:\limsup_{r\rightarrow0}\f{\lm{B(x,r)\setminus E}}{r^d}=0\Bigr\},\\
\intertext{the measure theoretic closure by}
\mc E&=\Bigl\{x:\limsup_{r\rightarrow0}\f{\lm{B(x,r)\cap E}}{r^d}>0\Bigr\}
\end{align*}
and the measure theoretic boundary by
\[
\mb E
=
\mc E
\setminus
\mi E
.
\]

\begin{lem}\label{lem_boundaryofunion}
Let \(A,B\subset\mathbb{R}^d\) be measurable. 
Then
\[\mb{(A\cup B)}\subset(\mb A\setminus\mc B)\cup(\mb B\setminus\mc A)\cup(\mb A\cap\mb B).\]
\end{lem}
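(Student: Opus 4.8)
The plan is a nine-way case analysis on the position of \(x\) relative to \(A\) and to \(B\), based on the partition of \(\mathbb{R}^d\) into the measure theoretic interior, boundary and exterior of a measurable set. First I would record two structural facts about a measurable \(E\subset\mathbb{R}^d\): the sets \(\mi E\), \(\mb E\), \(E^\comp\) are pairwise disjoint with union \(\mathbb{R}^d\), and \(\mc E=\mathbb{R}^d\setminus E^\comp\). The partition property follows from the definition \(\mb E=\mathbb{R}^d\setminus(\mi E\cup E^\comp)\) once one checks \(\mi E\cap E^\comp=\emptyset\): a point in both would force \(\limsup_{r\rightarrow0}\f{\lm{B(x,r)}}{r^d}\leq\limsup_{r\rightarrow0}\f{\lm{B(x,r)\setminus E}}{r^d}+\limsup_{r\rightarrow0}\f{\lm{B(x,r)\cap E}}{r^d}=0\), which is impossible since \(\f{\lm{B(x,r)}}{r^d}\) is a fixed positive constant. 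The identity \(\mc E=\mathbb{R}^d\setminus E^\comp\) is immediate by comparing the two defining conditions. So every \(x\) belongs to exactly one of \(\mi A,\mb A,A^\comp\) and to exactly one of \(\mi B,\mb B,B^\comp\).

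Next I would eliminate the cases incompatible with \(x\in\mb{(A\cup B)}\) by elementary density comparisons. If \(x\in\mi A\), then from \(B(x,r)\setminus(A\cup B)\subseteq B(x,r)\setminus A\) we get \(\limsup_{r\rightarrow0}\f{\lm{B(x,r)\setminus(A\cup B)}}{r^d}=0\), so \(x\in\mi{(A\cup B)}\) and hence \(x\notin\mb{(A\cup B)}\); by symmetry the same conclusion holds if \(x\in\mi B\). If \(x\in A^\comp\) and \(x\in B^\comp\), then from \(B(x,r)\cap(A\cup B)\subseteq(B(x,r)\cap A)\cup(B(x,r)\cap B)\) we get \(\limsup_{r\rightarrow0}\f{\lm{B(x,r)\cap(A\cup B)}}{r^d}=0\), so \(x\in(A\cup B)^\comp\) and again \(x\notin\mb{(A\cup B)}\). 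Thus if \(x\in\mb{(A\cup B)}\) then \(x\in\mb A\cup A^\comp\), \(x\in\mb B\cup B^\comp\), and \(x\) does not lie in both \(A^\comp\) and \(B^\comp\).

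Finally I would read off the conclusion from the three remaining sub-cases. If \(x\in\mb A\cap\mb B\), then \(x\) is in the last set on the right-hand side. If \(x\in\mb A\) and \(x\in B^\comp\), then \(x\notin\mc B\) because \(\mc B=\mathbb{R}^d\setminus B^\comp\), so \(x\in\mb A\setminus\mc B\); symmetrically, \(x\in A^\comp\) and \(x\in\mb B\) gives \(x\in\mb B\setminus\mc A\). These exhaust the surviving cases, which proves the asserted inclusion. I do not expect a real obstacle here: the only points requiring attention are keeping the set inclusions (and hence the density inequalities) pointed the right way for unions, and correctly exchanging \(E^\comp\) for \(\mc E\).
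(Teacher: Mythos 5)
Your proof is correct and rests on the same elementary ingredients as the paper's: the monotonicity of the density limsups under the inclusions \(B(x,r)\setminus(A\cup B)\subseteq B(x,r)\setminus A\) and \(B(x,r)\cap(A\cup B)\subseteq(B(x,r)\cap A)\cup(B(x,r)\cap B)\). The paper merely organizes the same comparisons more compactly (a symmetry reduction to the case \(\limsup_{r\to0}\lm{B(x,r)\cap A}/r^d>0\), then a dichotomy on the density of \(B\)) rather than running the full interior/boundary/exterior trichotomy, so your nine-case elimination is just a more verbose packaging of essentially the same argument.
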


\begin{proof} Let \(x\in\mb{(A\cup B)}\). 
Then
\begin{align*}
\limsup_{r\rightarrow0}\f{\lm{B(x,r)\cap(A\cup B)}}{r^d}&>0\\
\intertext{
and
}
\limsup_{r\rightarrow0}\f{\lm{B(x,r)\setminus(A\cup B)}}{r^d}&>0.
\end{align*}
By symmetry it suffices to consider the case that
\[\limsup_{r\rightarrow0}\f{\lm{B(x,r)\cap A}}{r^d}>0.\]
Then
\[\limsup_{r\rightarrow0}\f{\lm{B(x,r)\setminus A}}{r^d}\geq\limsup_{r\rightarrow0}\f{\lm{B(x,r)\setminus(A\cup B)}}{r^d}>0\]
which means \(x\in\mb A\). 
Analogously, if
\[\limsup_{r\rightarrow0}\f{\lm{B(x,r)\cap B}}{r^d}>0\]
then \(x\in\mb B\) so we get \(x\in\mb A\cap\mb B\). 
Otherwise
\[\limsup_{r\rightarrow0}\f{\lm{B(x,r)\cap B}}{r^d}=0\]
and we can conclude \(x\in\mb A\setminus\mc B\).
\end{proof}

Let \(E\subset\mathbb{R}^d\) be measurable and let \(\mu\) be the measure from the definition of \(\var\ind E\) and \(\nu\) the outer normal.
We define the reduced boundary \(\rb E\) of \(E\subset\mathbb{R}^d\) as the set of all points \(x\in\mathbb{R}^d\) such that
for all \(r>0\) we have \(\mu(B(x,r))>0\),
\[\lim_{r\rightarrow0}\avint_{B(x,r)}\nu\intd\mu=\nu(x),\]
and \(|\nu(x)|=1\).
This is Definition~5.4 in \cite{MR3409135}.
By Lemma~5.5 in \cite{MR3409135} we have \(\rb E\subset\mb E\) and \(\sm{\mb E\setminus\rb E}=0\).
Thus it suffices to consider only the reduced boundary when estimating the perimeter of a set.
But most of the time we will formulate the results for the measure theoretic boundary. 
The exception is \Cref{lem_finMf}, which we could only prove for the reduced boundary because there we make use of Theorem~5.13 in \cite{MR3409135}, which states the following.

\begin{lem}[Theorem~5.13 in \cite{MR3409135}]\label{lem_blowuprb}
Let \(E\subset\mathbb{R}^d\) be a measurable set.
Assume \(0\in\rb E\) with \(\nu(0)=(1,0,\ldots,0)\). 
Then for \(r\rightarrow0\) we have
\(\ind{\f1rE}\rightarrow\ind{\{x:x_1<0\}}\)
in \(L^1_\loc(\mathbb{R}^d)\).
\end{lem}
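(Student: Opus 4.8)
The plan is to run the classical De Giorgi blow-up argument. Set $E_r\seq\f1rE$ and $f_r\seq\ind{E_r}$, and let $\mu_r$ and $\nu_r$ denote the perimeter measure and outer normal of $E_r$; the change of variables formula gives $\mu_r(A)=r^{1-d}\mu(rA)$ and $\nu_r(x)=\nu(rx)$. It suffices to show that every sequence $r_k\to0$ admits a subsequence along which $f_{r_k}\to\ind{\{x_1<0\}}$ in $L^1_\loc(\mathbb{R}^d)$, since the limit will turn out not to depend on the subsequence.

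First I would establish compactness. The $f_r$ are uniformly bounded by $1$, and for every $R>0$,
\[\var_{B(0,R)}f_r=r^{1-d}\var_{B(0,rR)}\ind E=r^{1-d}\mu(B(0,rR))\lesssim R^{d-1},\]
using the upper perimeter density estimate $\mu(B(0,t))\lesssim t^{d-1}$, valid at points of $\rb E$. The $\BV$ compactness theorem then yields a subsequence with $f_{r_k}\to\ind F$ in $L^1_\loc(\mathbb{R}^d)$ for some set $F$ of locally finite perimeter, with $\mu_{r_k}\rightharpoonup\mu_F$ weakly-$*$ and $\var_{B(0,R)}\ind F\le\liminf_k\var_{B(0,R)}f_{r_k}$ by lower semicontinuity.

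Next I would identify $F$ as the half-space $\{x_1<0\}$, up to a Lebesgue-null set. From $\int_{B(0,s)}\nu_r\intd\mu_r=r^{1-d}\int_{B(0,rs)}\nu\intd\mu$ and the hypothesis $0\in\rb E$ we get $\avint_{B(0,rs)}\nu\intd\mu\to\nu(0)=(1,0,\ldots,0)$, and since $|\nu|=1$ this upgrades to $\int_{B(0,s)}|\nu_r-(1,0,\ldots,0)|\intd\mu_r\to0$ for every fixed $s$. Combining this with the weak-$*$ convergence $\mu_{r_k}\rightharpoonup\mu_F$ (and the convergence of the vector measures $-D\ind{E_{r_k}}=\nu_{r_k}\mu_{r_k}$) shows that the outer normal of $F$ equals $(1,0,\ldots,0)$ $\mu_F$-almost everywhere; equivalently, the transverse components $\partial_{x_2}\ind F,\ldots,\partial_{x_d}\ind F$ vanish and $\partial_{x_1}\ind F$ has a fixed sign. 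A set of locally finite perimeter with this property is, modulo a null set, of the form $\{x_1<c\}$ for some $c\in[-\infty,\infty]$: the vanishing transverse derivatives force $\ind F$ to depend on $x_1$ alone, the one-signed $x_1$-derivative makes it monotone, and the sign is fixed by $(1,0,\ldots,0)$ being the outer normal. To pin down $c=0$ I would invoke the lower volume density estimates at reduced-boundary points: for all small $t$ one has $\lm{E\cap B(0,t)}\gtrsim t^d$ and $\lm{B(0,t)\setminus E}\gtrsim t^d$; rescaling and passing to the limit along $r_k$ gives $\lm{F\cap B(0,\rho)}\gtrsim\rho^d$ and $\lm{B(0,\rho)\setminus F}\gtrsim\rho^d$ for every $\rho>0$, which is impossible unless $c=0$. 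Hence $F=\{x_1<0\}$, the subsequential limit is unique, and therefore $f_r\to\ind{\{x_1<0\}}$ in $L^1_\loc(\mathbb{R}^d)$ as $r\to0$.

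The hard part is not the blow-up bookkeeping above but the density estimates feeding into it — both the upper bound $\mu(B(0,t))\lesssim t^{d-1}$ used for compactness and the lower bounds $\lm{E\cap B(0,t)}\gtrsim t^d$, $\lm{B(0,t)\setminus E}\gtrsim t^d$ used to locate the half-space. These are obtained independently of the blow-up, via the relative isoperimetric inequality together with a differential-inequality comparison for the monotone function $t\mapsto\lm{E\cap B(0,t)}$ (whose a.e.\ derivative is controlled by $\mu(\cl{B(0,t)})$, using that $\mu(\partial B(0,t))=0$ for a.e.\ $t$), and they are available in \cite{MR3409135} before Theorem~5.13. A second point requiring care is the classification of sets of finite perimeter with a constant normal, which rests on the fact that a $\BV$ function whose distributional gradient is a one-signed multiple of a fixed direction coincides a.e.\ with a function that is monotone along that direction and constant in the orthogonal directions.
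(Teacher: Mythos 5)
The paper does not prove this lemma at all: it is quoted verbatim as Theorem~5.13 of Evans--Gariepy \cite{MR3409135}, and your argument is exactly the standard De~Giorgi blow-up proof given there (BV compactness from the upper perimeter density bound, identification of the limit's normal, classification of constant-normal sets as half-spaces, and the volume density estimates to force the boundary through the origin). Your proposal is correct and matches the cited source's approach, so there is nothing to add beyond noting that the density estimates you invoke are indeed established in \cite{MR3409135} before the blow-up theorem.
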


A central tool used here is the relative isoperimetric inequality, see Theorem~5.11 in \cite{MR3409135}.
It states that for a ball \(B\) and any measurable set \(E\subset\mathbb{R}^d\) we have
\begin{equation}\label{eq_isoperimetric}
\min\{\lm{E\cap B},\lm{B\setminus E}\}^{d-1}\lesssim\sm{\mb E\cap B}^d.
\end{equation}
However we need the relative isoperimetric inequality also for other sets than balls.
An open bounded set \(A\) is called a John domain if there is a constant \(K\) and point \(x\in A\) from which every other point \(y\in A\) can be reached via a path \(\gamma\) such that for all \(t\) we have
\begin{equation}\label{eq_twistedcone}
\dist(\gamma(t),A^\comp)\geq K^{-1}|y-\gamma(t)|.
\end{equation}
\begin{figure}
\centering
\includegraphics{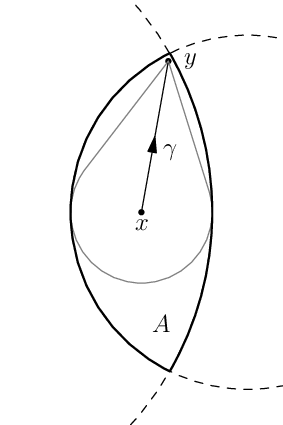}
\caption{A John domain.}
\label{fig_Acone}
\end{figure}
This is called the cone condition, see \Cref{fig_Acone}.
Theorem 107 in the lecture notes \cite{lecturenoteshajlasz} by Piotr Haj\l{}asz states that all John domains admit a relative isoperimetric inequality.

\begin{lem}\label{lem_isoperimetric}
Let \(A\subset\mathbb{R}^d\) be a John domain with constant \(K\). 
Then \(A\) satisfies a relative isoperimetric inequality with constant \(C_{K,d}\) only depending on \(K\) and the dimension \(d\),
\[\min\{\lm{E\cap A},\lm{A\setminus E}\}^{d-1}\leq C_{K,d}\sm{\mb E\cap A}^d.\]
\end{lem}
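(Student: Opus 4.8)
The plan is to derive this from the relative isoperimetric inequality on balls, \eqref{eq_isoperimetric}, by the usual chaining argument; the inequality is Theorem~107 in \cite{lecturenoteshajlasz}, so I only sketch the standard steps. We may assume $\sm{\mb E\cap A}<\infty$. Since $A$ is bounded, $u\seq\ind E$ then lies in $\BV(A)\cap L^1(A)$, its variation measure $\mu\seq|Du|$ has total mass $\mu(A)=\sm{\mb E\cap A}$, and $u(x)\in\{0,1\}$ for a.e.\ $x$ because $\lm{\mb E}=0$.

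First I would take a Whitney-type covering of $A$ by open balls $B_j\subset A$ with $\diam B_j\sim\dist(B_j,\partial A)$ and bounded overlap. The twisted cone condition \eqref{eq_twistedcone}, anchored at the John center $x_0$, then lets one single out a central ball $B_0\ni x_0$ with $B_0\subset A\subset C_KB_0$ and $\lm A\sim_K\lm{B_0}$, and join each $B_j$ to $B_0$ by a finite chain of these balls in which consecutive balls overlap in a fixed proportion of their volume and all balls along the chain have comparable radii (the Boman chain condition). Applying \eqref{eq_isoperimetric} — equivalently the $(\f d{d-1},1)$-Poincar\'e inequality — on each ball of a chain and telescoping, and using the bounded overlap of the $B_j$ to collect the local contributions into a single Riesz potential of $\mu$, one obtains
\[|u(x)-u_{B_0}|\leq C_K\int_A\f{\intd\mu(y)}{|x-y|^{d-1}}\qquad\text{for a.e.\ }x\in A,\]
where $u_{B_0}$ is the average of $u$ over $B_0$.

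Write $I\mu$ for the Riesz potential on the right and recall the classical weak-type bound $\lm{\{x:I\mu(x)>t\}}\lesssim_d(\mu(\mathbb{R}^d)/t)^{\f d{d-1}}$. For a.e.\ $x\in E\cap A$ and a.e.\ $x'\in A\setminus E$ one has $|u(x)-u(x')|=1$, so $C_K(I\mu(x)+I\mu(x'))\geq1$ and hence $\max\{I\mu(x),I\mu(x')\}\geq c_K\seq(2C_K)^{-1}$. Assume without loss of generality $\lm{E\cap A}\leq\f12\lm A\leq\lm{A\setminus E}$. If $I\mu(x)\geq c_K$ for a.e.\ $x\in E\cap A$, then $\lm{E\cap A}\leq\lm{\{I\mu\geq c_K\}}\lesssim_K\mu(A)^{\f d{d-1}}$; otherwise a positive-measure set of $x\in E\cap A$ has $I\mu(x)<c_K$, which forces $I\mu(x')\geq c_K$ for a.e.\ $x'\in A\setminus E$, so $\lm{E\cap A}\leq\lm{A\setminus E}\leq\lm{\{I\mu\geq c_K\}}\lesssim_K\mu(A)^{\f d{d-1}}$. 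In both cases $\min\{\lm{E\cap A},\lm{A\setminus E}\}\lesssim_K\mu(A)^{\f d{d-1}}=\sm{\mb E\cap A}^{\f d{d-1}}$, and raising to the power $d-1$ gives the asserted inequality. The main obstacle is the chaining step: extracting the Boman chain from the twisted cone condition \eqref{eq_twistedcone} and carrying out the telescoping with constants depending only on $K$ and $d$. This is the technical core of Theorem~107 in \cite{lecturenoteshajlasz} and is only quoted here.
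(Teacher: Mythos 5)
The paper never proves this lemma internally: it is quoted verbatim as Theorem~107 of Haj\l{}asz's lecture notes \cite{lecturenoteshajlasz}, so there is no in-paper argument to compare yours against. Your sketch is the standard proof underlying that theorem, and it is correct as far as it goes: the John condition yields a Boman chain decomposition with a central ball $B_0$ of volume comparable to $\lm A$, telescoping the ball Poincar\'e inequality along chains gives the pointwise bound $|u(x)-u_{B_0}|\lesssim_K I_1\mu(x)$ a.e.\ for $u=\ind E$, and since $u$ is a characteristic function the weak-type $(1,\f d{d-1})$ bound for the Riesz potential suffices (no truncation/strong-type estimate needed); your case analysis with $c_K=(2C_K)^{-1}$ closes the argument correctly. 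Two minor remarks: the step from $\sm{\mb E\cap A}<\infty$ to $\ind E\in\BV(A)$ with $\mu(A)=\sm{\mb E\cap A}$ silently uses Federer's criterion together with the identification of the perimeter measure with $\H^{d-1}$ on the measure-theoretic boundary (harmless here, and in the paper's applications $E$ already has locally finite perimeter); and the chaining step you explicitly defer to is precisely the technical content of the cited Theorem~107, so in effect your writeup amounts to the paper's citation plus a correct outline of the proof behind it.
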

For example a ball and an open cube are John domains.


Another basic tool is the Vitali covering lemma, see for example Theorem~1.24 in \cite{MR3409135}.
\begin{lem}[Vitali covering lemma]\label{lem_disjointcover}
Let \(\B\) be a set of balls in \(\mathbb{R}^d\) with diameter bounded by some \(R\in\mathbb{R}\). 
Then it has a countable subset \(\tilde\B\) of disjoint balls such that
\[\bigcup\B\subset\bigcup5\tilde\B.\]
\end{lem}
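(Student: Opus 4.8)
The statement is the classical Vitali covering lemma, and I would give the standard greedy-selection-by-size proof. The plan is to organize $\B$ into generations according to diameter: for $j\in\{1,2,\ldots\}$ put
\[\B_j=\Bigl\{B\in\B:\f R{2^j}<\diam B\leq\f R{2^{j-1}}\Bigr\},\]
so that $\B=\bigcup_{j\geq1}\B_j$ after discarding any degenerate balls of diameter zero. Here the hypothesis that the diameters are bounded by $R$ is exactly what guarantees that the generations start at $j=1$, so that the recursive construction below is well founded.

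Next I would construct the disjoint subcollections inductively. Suppose $\tilde\B_1,\ldots,\tilde\B_{j-1}$ have already been chosen; let $\tilde\B_j$ be a subcollection of
\[\Bigl\{B\in\B_j:B\cap B'=\emptyset\tx{ for all }B'\in\tilde\B_1\cup\cdots\cup\tilde\B_{j-1}\Bigr\}\]
that is pairwise disjoint and maximal with respect to inclusion among such subcollections; such a collection exists by Zorn's lemma (and the appeal to Zorn is unnecessary if $\B$ happens to be countable). Set $\tilde\B=\bigcup_{j\geq1}\tilde\B_j$. By construction the balls of $\tilde\B$ are pairwise disjoint, and since each has positive Lebesgue measure and $\mathbb{R}^d$ is $\sigma$-finite, $\tilde\B$ is at most countable.

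It remains to verify the covering property. Fix $B\in\B$, and let $j$ be the index with $B\in\B_j$. By maximality of $\tilde\B_j$ there is some $B'\in\tilde\B_1\cup\cdots\cup\tilde\B_j$ with $B\cap B'\neq\emptyset$ (taking $B'=B$ if $B\in\tilde\B_j$). Write $B=B(x,r)$ and $B'=B(x',r')$. Since $B'\in\B_k$ for some $k\leq j$, the generation bounds give $2r'=\diam B'>R/2^k\geq R/2^j$ and $2r=\diam B\leq R/2^{j-1}=2\cdot R/2^j$, hence $r<2r'$. Then for any $y\in B$ and any $p\in B\cap B'$ the triangle inequality yields
\[|y-x'|\leq|y-x|+|x-p|+|p-x'|<2r+r'<5r',\]
so $B\subset5B'\subset\bigcup5\tilde\B$. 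Since $B\in\B$ was arbitrary, $\bigcup\B\subset\bigcup5\tilde\B$, which completes the proof.

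The only delicate point is the inductive setup: one must use the bound $R$ on the diameters to get a top generation and to ensure the recursion makes sense; the rest is the elementary containment $B\subset5B'$ together with the observation that a pairwise disjoint family of balls in $\mathbb{R}^d$ is countable.
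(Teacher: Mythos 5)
Your proof is correct and is exactly the standard dyadic-generation greedy selection argument; the paper does not prove this lemma itself but cites Theorem~1.24 in Evans--Gariepy, whose proof is the same construction (maximal disjoint subfamilies chosen generation by generation, then the radius comparison \(r<2r'\) giving \(B\subset5B'\)). No gaps: the bound \(R\) is indeed what makes the generations start, and disjointness plus positive measure gives countability.
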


Instead of considering \(\{\M\ind E>\lambda\}\) we will only consider a finite union of balls/cubes.
In order to pass from there to the whole set \(\{\M\ind E>\lambda\}\) we will use an approximation result.
We say that a sequence \((A_n)_n\) of sets in \(\mathbb{R}^d\) converges to some set \(A\) in \(L^1_\loc(\mathbb{R}^d)\) if \((\ind{A_n})_n\) converges to \(\ind A\) in \(L^1_\loc(\mathbb{R}^d)\). 

\begin{lem}[Theorem~5.2 in \cite{MR3409135} for characteristic functions]\label{lem_l1approx}
Let \(\Omega\subset\mathbb{R}^d\) be an open set and let \((A_n)_n\) be subsets of \(\mathbb{R}^d\) with locally finite perimeter that converge to \(A\) in \(L^1_\loc(\Omega)\). 
Then
\[\sm{\mb A\cap \Omega}\leq\liminf_{n\rightarrow\infty}\sm{\mb A_n\cap \Omega}.\]
\end{lem}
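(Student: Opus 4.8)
The statement is the classical lower semicontinuity of the perimeter under $L^1_\loc$ convergence, so the plan is to exploit that the variation is, by definition, a supremum of the linear functionals $\varphi\mapsto\int_A\div\varphi$, and a supremum of continuous functionals is lower semicontinuous. If $\liminf_{n\to\infty}\sm{\mb{A_n}\cap\Omega}=\infty$ there is nothing to prove, so I would pass to a subsequence (not relabelled) along which the $\liminf$ is attained and is finite.

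First I would fix an arbitrary $\varphi\in C^1_{\tx c}(\Omega;\mathbb{R}^d)$ with $|\varphi|\leq1$. Since $\div\varphi$ is bounded and supported in a fixed compact subset of $\Omega$, and $\ind{A_n}\to\ind A$ in $L^1_\loc(\Omega)$, dominated convergence on $\supp\varphi$ gives
\[\int_A\div\varphi=\lim_{n\to\infty}\int_{A_n}\div\varphi.\]
Because each $A_n$ has locally finite perimeter, the defining supremum yields $\int_{A_n}\div\varphi\leq\var_\Omega\ind{A_n}$, and by \Cref{lem_coareabv} applied to $f=\ind{A_n}$ (whose superlevel sets are $\mathbb{R}^d$, $A_n$, or $\emptyset$) we have $\var_\Omega\ind{A_n}=\sm{\mb{A_n}\cap\Omega}$. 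Hence $\int_A\div\varphi\leq\liminf_{n\to\infty}\sm{\mb{A_n}\cap\Omega}$.

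Taking the supremum over all admissible $\varphi$ shows that $\ind A$ has locally bounded variation on $\Omega$, i.e.\ $A$ has locally finite perimeter, with $\var_\Omega\ind A\leq\liminf_{n\to\infty}\sm{\mb{A_n}\cap\Omega}$. Applying \Cref{lem_coareabv} once more, now to $f=\ind A$, gives $\var_\Omega\ind A=\sm{\mb A\cap\Omega}$, and combining the two identities finishes the proof.

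The argument is entirely routine; the only point requiring a word of care is the passage to the limit inside the integral, which is legitimate precisely because each test field $\varphi$ has compact support in $\Omega$, so only $L^1$ convergence on that fixed compact set is needed. There is no genuine obstacle here — the content is simply that variation is a supremum of functionals that are continuous under $L^1_\loc$ convergence.
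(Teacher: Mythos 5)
Your argument is correct: it is the standard lower-semicontinuity proof (fix a test field \(\varphi\in C^1_{\tx c}(\Omega;\mathbb{R}^d)\), pass to the limit using only \(L^1\) convergence on \(\supp\varphi\), take the supremum, and identify \(\var_\Omega\ind A=\sm{\mb A\cap\Omega}\) via \Cref{lem_coareabv}), and this is exactly the argument behind Theorem~5.2 in Evans--Gariepy, which the paper simply cites rather than reproves. So you take essentially the same route as the paper's source; the only implicit step is the routine identification of \(\mu(\Omega)\) with the supremum of \(\int_\Omega f\div\varphi\) over admissible \(\varphi\), which is standard.
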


\begin{lem}
\label{eq_unitvolumeratio}
Let \(\sigma_d=\lm{B(0,1)}\) be the Lebesgue measure of the \(d\)-dimensional unit ball.
Then
\[
\sqrt{\f{2\pi}{d+1}}
\leq
\f{\sigma_d}{\sigma_{d-1}}
\leq
\sqrt{\f{2\pi}d}
.
\]
\end{lem}
\begin{proof}
By the logarithmic convexity of the \(\Gamma\)-function, for all \(x>\f12\) we have
\begin{align*}
\f{\Gamma(x)}{\Gamma(x+1/2)}
&\leq
\f{\sqrt{\Gamma(x-1/2)\Gamma(x+1/2)}}{\Gamma(x+1/2)}
=
\sqrt{\f{\Gamma(x-1/2)}{\Gamma(x+1/2)}}
=
\f1{\sqrt{x-1/2}}
,
\\
\f{\Gamma(x)}{\Gamma(x+1/2)}
&\geq
\f{\Gamma(x)}{\sqrt{\Gamma(x)\Gamma(x+1)}}
=
\sqrt{\f{\Gamma(x)}{\Gamma(x+1)}}
=
\f1{\sqrt x}
,
\end{align*}
and the result follows from
\(
\sigma_d
=\pi^{\f d2}/
\Gamma(d/2+1)
.
\)
\end{proof}

We will need some facts about convex sets.
\begin{lem}
\label{lem_surfaceconvex}
The following properties hold for all convex and bounded sets \(A,B\subset\mathbb{R}^d\).
\begin{enumerate}
\item
The set \(A\cap B\) is convex.
\label{it_intersectionconvex}
\item
If \(A\subset B\) then
\(
\sm{\partial A}
\leq
\sm{\partial B}
.
\)
\label{it_perimeterconvex}
\item
For every \(\varepsilon>0\) we have
\(
\lm{\{x\in A:0<\dist(x,A^\comp)\leq\varepsilon\}}
\leq\varepsilon
\sm{\partial A}
.
\)
\label{it_perimeterblowupcovex}
\end{enumerate}
\end{lem}
\begin{proof}
\Cref{it_intersectionconvex} follows from the definition of convexity.

For every \(x\in\partial B\) there is a point \(z\in\partial A\) with
\[
|z-x|
=
\min_{y\in\partial A}|y-x|
.
\]
A straightforward computation shows that if \(z'\in\partial A\) with \(|x-z'|=\min_{y\in\partial A}|x-y|\)
then
\(
|x-(z+z')/2|
\leq
\min_{y\in\partial A}|x-y|
\)
and the inequality is strict if \(z'\neq z\).
Hence we must have \(z'=z\) because \((z+z')/2\in\cl A\) by convexity.
We denote \(p(x)=z\).

Since \(A\) is convex, in every point \(z\in\partial A\) there is a hyperplane \(H\) which contains \(z\) and such that for all \(y\in\partial A\) we have
\(\langle y-z,n\rangle\leq0\),
where \(n\) is the normal of \(H\).
Because \(B\) is bounded, there is an \(r\geq0\) such that \(z+rn\in\partial B\).
It is easy to see that \(p(z+rn)=z\).
That means \(p:\partial B\rightarrow\partial A\) is surjective.

Let \(x_1,x_2\in\partial B\).
For \(i=1,2\) denote \(z_i=p(x_i)\) and let \(H_i\) be the hyperplane with normal \(x_i-z_i\) which contains \(z_i\).
Then
\(
\langle z_2-z_1,x_1-z_1\rangle
\leq0
\)
because otherwise it is straightforward to find a \(t>0\) small enough with \((1-t)z_1+tz_2\in\cl A\) which is closer to \(x_1\) than \(z_1\), which leads to a contradiction to \(p(x_1)=z_1\).
Similarly we must have
\(
\langle z_1-z_2,x_2-z_2\rangle
\leq0
.
\)
We can conclude
\[
|z_1-z_2||x_1-x_2|
\geq
\langle z_1-z_2,x_1-x_2\rangle
=
\langle z_1-z_2,x_1-z_1\rangle
+
\langle z_2-z_1,x_2-z_2\rangle
+
\langle z_1-z_2,z_1-z_2\rangle
\geq
|z_1-z_2|^2
.
\]
This means that the map \(p:\partial B\rightarrow\partial A\) is \(1\)-Lipschitz,
and we obtain \cref{it_perimeterconvex} because the Hausdorff measure does not increase under \(1\)-Lipschitz maps by \cite[Theorem~2.8]{MR3409135}.

For every \(\lambda\geq0\) denote \(A_\lambda=\{x\in A:\dist(x,A^\comp)\geq\lambda\}\).
Then \(A_\lambda\) is convex and by \cite[Theorem~3.14]{MR3409135} and \cref{it_perimeterconvex} we have
\[
\lm{\{x\in A:0<\dist(x,A^\comp)\leq\varepsilon\}}
=
\int_0^\varepsilon
\sm{\partial A_\lambda}
\intd\lambda
\leq
\int_0^\varepsilon
\sm{\partial A}
\intd\lambda
=
\varepsilon
\sm{\partial A}
.
\]
\end{proof}

\section{Tools for both maximal operators}\label{sec_both}

We start with a couple of tools that are used for both maximal operators.

\begin{lem}\label{lem_isoperimetricconsequence}
Let \(X\subset\mathbb{R}^d\) be an open set with finite measure and finite perimeter which satisfies a relative isoperimetric inequality
and denote \(c=\sm{\partial X}^d/\lm X^{d-1}\).
Let \(0<\lambda\leq1-\varepsilon<1\) and let \(E\) be a measurable set such that \(\lambda\leq\lm{E\cap X}/\lm X\leq1-\varepsilon\).
Then
\[\sm{\mb E\cap X}\gtrsim c^{-1/d}\varepsilon^{d-1}\lambda^{\f{d-1}d}\sm{\partial X}.\]
\end{lem}

Note that \(c\) is invariant under scaling of \(X\).

\begin{proof}
We first prove
\begin{equation}
\label{eq_isopereps}
\lm{E\cap X}^{d-1}
\lesssim\varepsilon^{-(d-1)}
\sm{\mb E\cap X}^d.
\end{equation}
If \(\varepsilon\geq\f12\) then \cref{eq_isopereps} follows directly from the relative isoperimetric inequality for \(X\).
For \(\varepsilon<\f12\) we obtain \cref{eq_isopereps} it from the relative isoperimetric inequality as follows
\[
\sm{\mb E\cap X}^d\gtrsim\lm{X\setminus E}^{d-1}
\geq\varepsilon^{d-1}
\lm{X}^{d-1}
\geq\varepsilon^{d-1}
\lm{E\cap X}^{d-1}
.
\]
From \cref{eq_isopereps} we conclude
\[\varepsilon^{-(d-1)}\sm{\mb E\cap X}\gtrsim\lm{E\cap X}^{\f{d-1}d}\geq\lambda^{\f{d-1}d}\lm X^{\f{d-1}d}\geq c^{-1/d}\lambda^{\f{d-1}d}\sm{\partial X}.\]
\end{proof}

\begin{lem}[Boxing inequality, c.f.\ Theorem 3.1 in Kinnunen, Korte, Shanmugalingam, Tuominen \cite{MR2400262}]\label{lem_boxing}
Let \(E\subset\mathbb{R}^d\) be a set with finite measure that is contained in the union of a set \(\B\) of balls \(B\) with \(\lm{E\cap B}\leq\lm B/2\). 
Then there is a set \(\F\) of balls \(F\) with \(\lm{F\cap E}=\lm F/2\) which covers almost all of \(E\).
Furthermore, each \(F\in\F\) is contained in a ball \(B\in\B\).
\end{lem}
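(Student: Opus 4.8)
The plan is to produce, for each ball $B\in\mathcal B$, a sub-ball $F\subset B$ with $\lm{F\cap E}=\lm F/2$ by starting from $B$ itself, where $\lm{E\cap B}\le\lm B/2$, and continuously shrinking it toward a Lebesgue density point of $E$ lying in $B$, where the density ratio rises to $1$; the intermediate value theorem then picks out the size at which the ratio is exactly $\f12$. First I would discard every $B\in\mathcal B$ with $\lm{E\cap B}=0$: this changes neither $\bigcup\mathcal B$ up to a null set nor the conclusion, so afterwards I may assume $\lm{E\cap B}>0$ for every $B\in\mathcal B$. By the Lebesgue density theorem almost every $x\in E$ satisfies $x\in\mi E$, and $E\subset\bigcup\mathcal B$, so almost every $x\in E$ lies in $\mi E$ and in some $B\in\mathcal B$.

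Given such a point $x$ and a ball $B=B(c,R)\in\mathcal B$ with $x\in B$, so that $|x-c|<R$, I would consider the one-parameter family $B_t=B\bigl((1-t)x+tc,\,tR\bigr)$, $t\in(0,1]$. A short computation gives $\overline{B_t}\subset B$ (since $(1-t)|x-c|+tR<R$), $B_1=B$, $x\in B_t$ (since $|x-((1-t)x+tc)|=t|x-c|<tR$), and $B_t\subset B(x,2tR)$, so $\diam B_t\to0$ as $t\to0^+$. Put $h(t)=\lm{B_t\cap E}/\lm{B_t}$. Then $h$ is continuous on $(0,1]$ because $\lm{B_s\sdiff B_t}\to0$ as $s\to t$ while $\lm{B_t}=\omega_d(tR)^d>0$; moreover $\lm{B_t\setminus E}\le\lm{B(x,2tR)\setminus E}$ together with $x\in\mi E$ forces $h(t)\to1$ as $t\to0^+$, whereas $h(1)=\lm{E\cap B}/\lm B\le\f12$. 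By the intermediate value theorem there is $t^*\in(0,1]$ with $h(t^*)=\f12$, and I set $F_x=B_{t^*}$; then $\lm{F_x\cap E}=\lm{F_x}/2$, $F_x\subset B$ and $x\in F_x$.

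I would then let $\mathcal F$ consist of one ball $F_x$ attached to each $B\in\mathcal B$ (for every $B$ pick some $x\in\mi E$ with $x\in B$, which exists because $\lm{E\cap B}>0$, and keep the resulting $F_x\subset B$), together with the balls $F_x$ as $x$ ranges over a full-measure set of density points of $E$. By construction every $F\in\mathcal F$ lies in some $B\in\mathcal B$ and satisfies $\lm{F\cap E}=\lm F/2$; every $B\in\mathcal B$ contains the $F$ attached to it; and since almost every $x\in E$ is an eligible density point and $x\in F_x\in\mathcal F$, the family $\mathcal F$ covers almost all of $E$.

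Essentially everything here is routine: the geometric facts $\overline{B_t}\subset B$ and $x\in B_t$, the continuity of $h$, and the density-point limit $h(t)\to1$. The one place that needs attention is the bookkeeping --- arranging the assignment of density points to balls so that all three conclusions hold simultaneously, namely that $\mathcal F$ covers almost all of $E$, that every $F\in\mathcal F$ sits inside some $B\in\mathcal B$, and that every $B\in\mathcal B$ hosts some $F\in\mathcal F$ --- which is why I would assemble $\mathcal F$ from the two overlapping families above rather than from a single one.
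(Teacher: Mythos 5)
Your proof is correct and follows essentially the same route as the paper: a continuous one-parameter family of balls interpolating between a small ball at a Lebesgue density point (where the density ratio is near \(1\)) and the given ball \(B\) (where it is \(\leq\tfrac12\)), combined with the intermediate value theorem. The only cosmetic difference is that you let the family shrink to the point \(x\) and argue \(h(t)\to1\), whereas the paper fixes a small ball of density \(\geq\tfrac12\) and interpolates linearly; this changes nothing of substance.
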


\begin{proof}
It suffices to show that for every ball \(B(x_1,r_1)\in\B\)
every Lebesgue point \(x\in\mi E\) with \(x\in B(x_1,r_1)\)
is contained in a ball \(F\subset B(x_1,r_1)\) with \(\lm{F\cap E}=\lm F/2\).
By assumption
\begin{align*}
\lm{E\cap B(x_1,r_1)}&\leq\f{\lm{B(x_1,r_1)}}2\\
\intertext{and since \(x\) is a Lebesgue point there is a ball \(B(x_0,r_0)\) with \(x\in B(x_0,r_0)\subset B(x_1,r_1)\) and}
\lm{E\cap B(x_0,r_0)}&\geq\f{\lm{B(x_0,r_0)}}2.
\end{align*}
Define \(x_t=(1-t)\cdot x_0+t\cdot x_1\) and \(r_t=(1-t)\cdot r_0+t\cdot r_1\)
so that \(t\mapsto B(x_t,r_t)\) is a continuous transformation of balls. 
That means there is a \(t\) with 
\[\lm{E\cap B(x_t,r_t)}=\f{\lm{B(x_t,r_t)}}2.\]
Since \(x\in B(x_0,r_0)\subset B(x_t,r_t)\subset B(x_1,r_1)\) that means we have found the right ball.
\end{proof}

We will prove a more specialized version of \Cref{lem_boxing}. 

\begin{lem}\label{cla_surfacedistanceboxingball}
Let \(X\) be an open cube or a ball in \(\mathbb{R}^d\) and \(E\) a set with \(\lm{E\cap X}\geq\lambda\lm X\). 
Then there is a cover \(\C\) of \(\partial X\setminus\mc E\)
consisting of balls \(C\) with \(\diam C\leq2\diam X\) and
\begin{equation}\label{eq_ballcontainsmuchboundary}
\smb{\mb E\cap\Bigl\{y\in C:\dist(y,X^\comp)> \f{\lambda\diam C}{4dc_d}\Bigr\}}\gtrsim\lambda^{\f{d-1}d}\sm{\partial C}
,
\end{equation}
where \(c_d=2^d\) if \(X\) is a ball and \(c_d=d^{\f d2}\sigma_d\) if \(X\) is a cube.
\end{lem}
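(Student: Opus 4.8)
The plan is to mimic the proof of the boxing inequality (\Cref{lem_boxing}) but track an additional parameter measuring how deep inside $X$ a ball sits, and to use \Cref{cla_isoperimetricballintersection} in place of the relative isoperimetric inequality for balls. Fix a point $x\in\mb X\setminus\mc E$; by definition of $\mb X$ and $\mc E$ there is a small ball centered at $x$ that is at least half in $X$ and whose intersection with $E$ has density zero. On the other hand, since $\lm{E\cap X}\geq\lambda\lm X$, the ball $X$ itself (or a slightly dilated ball centered at the center of $X$, still of diameter $\lesssim\diam X$) has $E$-density at least $\gtrsim\lambda$. Interpolating continuously between these two balls — sliding the center along the segment from $x$ to the center of $X$ and adjusting the radius as in \Cref{lem_boxing} — I get a one-parameter family of balls $C_t$, all of diameter $\leq 2\diam X$, along which the quantity $\lm{E\cap C_t}/\lm{C_t}$ moves from essentially $0$ up to $\gtrsim\lambda$. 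By the intermediate value theorem I can stop at a ball $C$ with $\lm{E\cap C}/\lm C$ comparable to $\lambda$, say pinned between $c_1\lambda$ and $c_2\lambda$ for dimensional constants with $c_2\lambda<1$ (shrinking $\lambda$ if needed, or using that $\lambda<1$), and such that $C$ is centered on $\mb X$ — because $x\in\mb X$ — and $C$ still has $\diam C\lesssim\diam X$.

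Now apply \Cref{lem_isoperimetricconsequence} not to $C$ but to the set $A=C\cap X\cap\{y:\dist(y,X^\comp)>L\diam C\}$ with $L=\lambda/(4d^{d/2-1})\leq\frac14$, which by \Cref{cla_isoperimetricballintersection} satisfies a relative isoperimetric inequality with a dimensional constant. The point is that removing the collar $\{y:\dist(y,X^\comp)\leq L\diam C\}$ from $C\cap X$ changes its measure by at most a dimensional multiple of $L\diam C\cdot(\diam C)^{d-1}=L\lm C$ up to constants, hence by at most a small dimensional multiple of $\lambda\lm C$; choosing the constant in $L$ small enough relative to $c_1$, the set $A$ still has $\lm{E\cap A}\gtrsim\lambda\lm A$ and $\lm{A\setminus E}\gtrsim\lm A$, and $\lm A\gtrsim\lm C$ so that $\sm{\partial C}\lesssim\sm{\partial A}$-type comparisons go through. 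Then \Cref{lem_isoperimetricconsequence} gives $\sm{\mb E\cap A}\gtrsim\lambda^{(d-1)/d}\sm{\partial C}$, and since $A\subset C\cap\{y:\dist(y,X^\comp)>L\diam C\}$ this is exactly \cref{eq_ballcontainsmuchboundary} for this $C$. Letting $x$ range over $\mb X\setminus\mc E$ and collecting the resulting balls produces the desired cover $\C$.

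I expect the main obstacle to be the bookkeeping that makes the collar removal harmless: I must verify that after deleting the $L\diam C$-neighborhood of $X^\comp$ the set $A$ retains both $\lm{E\cap A}/\lm A\gtrsim\lambda$ (for the lower isoperimetric input) and $\lm A\gtrsim\lm C$ (so the conclusion is in terms of $\sm{\partial C}$, not a degenerate $\sm{\partial A}$), \emph{simultaneously}, and that the constants can be chosen in the right order: first fix the dimensional constants from \Cref{cla_isoperimetricballintersection} and the density window $[c_1\lambda,c_2\lambda]$, then choose the numerical constant $1/(4d^{d/2-1})$ in $L$ small enough. A secondary subtlety is ensuring the interpolating ball $C$ really can be taken centered on $\mb X$ with $x$ as its center while still controlling its diameter — here one slides only the auxiliary endpoint ball toward $X$'s center and keeps $C=C_0$ at $x$, or, if the density at $x$'s small ball is not literally below $c_1\lambda$, one first shrinks that ball, which is legitimate since $x\notin\mc E$ forces the $E$-density there to tend to $0$. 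The rest is a routine adaptation of the continuity-of-balls argument already used for \Cref{lem_boxing}.
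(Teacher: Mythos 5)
There is a genuine gap, and it lies exactly where you flagged the "bookkeeping": you run the intermediate value argument on the density of \(E\) in the \emph{whole} ball, \(\lm{E\cap C_t}/\lm{C_t}\), and then hope that deleting the collar only costs a small multiple of \(\lambda\lm C\). But a ball \(C\) centered at \(x\in\partial X\) sticks out of \(X\), and nothing in your pinning controls how much of \(\lm{E\cap C}\) lies in \(C\setminus X\): the hypothesis \(\lm{E\cap X}\geq\lambda\lm X\) says nothing about \(E\) outside \(X\). One can place a thin shell of \(E\) just outside \(\partial X\) near \(x\) (separated from \(\partial X\) by a small gap, so that \(x\notin\mc E\)), keep \(E\cap X\) far away from \(x\), and arrange that at the radius where your quotient equals \(c_1\lambda\) \emph{all} of the captured mass is in \(C\setminus X\). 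Then \(A=C\cap X\cap\{y:\dist(y,X^\comp)>L\diam C\}\) satisfies \(\lm{E\cap A}=0\), the collar estimate is irrelevant, and the lower isoperimetric input \(\lm{E\cap A}\gtrsim\lambda\lm A\) fails, so \cref{eq_ballcontainsmuchboundary} cannot be concluded for that ball. A second, smaller problem is the upper bound: you need \(\lm{E\cap A}\leq(1-\varepsilon)\lm A\) with \(\varepsilon\) dimensional, uniformly for all \(\lambda\in(0,1)\); "shrinking \(\lambda\) if needed" is not available, since the statement must hold for the given \(\lambda\), and pinning relative to the full ball does not by itself give this when \(\lambda\) is close to \(1\). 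Finally, the sliding-center interpolation is incompatible with \Cref{cla_isoperimetricballintersection}, which requires \(C\) centered on \(\partial X\); your proposed repair (keep the center at \(x\) and vary only the radius) is the right move, but it is not what the interpolation as described produces.

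The paper's proof fixes both issues in one stroke by running the continuity argument on the correct quotient: with the collar width proportional to the current radius, set \(A(r)=B(x,r)\cap\{y:\dist(y,X^\comp)>\lambda r/(2d^{\f d2-1})\}\subset X\) and consider \(r\mapsto\lm{A(r)\cap E}/\lm{A(r)}\). At \(r=\diam X\) this is at least \(\lambda-\f\lambda2=\f\lambda2\) (the collar occupies at most a \(\f\lambda2\)-fraction of \(B(x,r)\cap X\) at every scale), while it drops below \(\f\lambda2\) for small \(r\) because \(x\notin\mc E\) and \(\lm{A(r)}\gtrsim r^d\); pinning it at exactly \(\f\lambda2\) gives both the lower density bound and the upper bound \(\leq\f12\) simultaneously and uniformly in \(\lambda\), after which \Cref{lem_isoperimetricconsequence} applied to \(A(r)\) yields \cref{eq_ballcontainsmuchboundary}. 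So your overall architecture (fix \(x\), vary the scale, use \Cref{cla_isoperimetricballintersection}) is the intended one, but measuring the density against the ball instead of against the collar-removed region inside \(X\) is a real defect, not mere bookkeeping.
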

\begin{figure}
\centering
\includegraphics{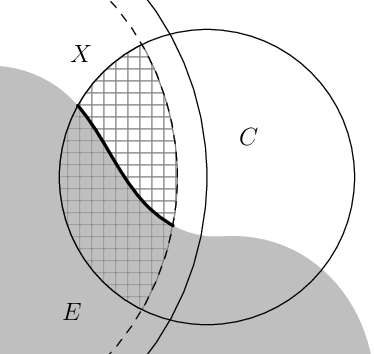}
\caption{The regions in \Cref{cla_surfacedistanceboxingball}.}
\end{figure}

The constants in \Cref{cla_surfacedistanceboxingball} are not important and
one could also impose a stronger bound on the diameter of the balls \(C\in\C\) for \(\lambda\) near 1.

\begin{proof}[Proof of \Cref{cla_surfacedistanceboxingball}]
It suffices to show that for each \(x\in\partial X\setminus\mc E\) there is a ball \(C\) centered in \(x\) that satisfies \cref{eq_ballcontainsmuchboundary}.
Let \(x\in\partial X\setminus\mc E\) and for \(0<r\leq\diam X\) define
\[
A(r)
=
\Bigl\{y\in B(x,r):\dist(y,X^\comp)> \f{\lambda r}{2dc_d}\Bigr\}
.
\]

We first show that \(A(r)\) is a John domain.
Consider the case that \(X\) is a ball.
Then there is a point \(z\in X\cap B(x,r)\) such that \(B(z,\f r2)\subset X\cap B(x,r)\).
That means
\[
B\Bigl(z,\f r4\Bigr)
\subset
B\Bigl(z,\f r2-\f{\lambda r}{2dc_d}\Bigr)
\subset
A(r)
.
\]
Now let \(X\) be a cube.
Then \(X\cap B(x,r)\) contains a cube with diameter at least \(r\),
i.e.\ sidelength at least \(\f r{\sqrt d}\).
Thus, \(A(r)\) contains a cube with sidelength at least
\[
\f r{\sqrt d}-2\f{\lambda r}{2dc_d}
\geq
\f r{\sqrt d}\Bigl(1-\f1{\sqrt dc_d}\Bigr)
\geq
\f r{2\sqrt d}
\]
which in turn contains a ball \(B\) with radius \(\f r{4\sqrt d}\).
The last inequality holds because \(1^{\f{1+1}2}\sigma_1=2\) and \(\sqrt dc_d=d^{\f{d+1}2}\sigma_d\) is increasing in \(d\) by \Cref{eq_unitvolumeratio}.
We have shown that there is a point \(z\in A(r)\) such that
\begin{equation}
\label{eq_Arlikeball}
B\Bigl(z,\f r{4\sqrt d}\Bigr)
\subset
A(r)
,
\end{equation}
both if \(X\) is a cube or a ball.
For any \(y\in A(r)\) we have \(\dist(y,z)\leq\diam(A(r))\leq 2r\).
Because \(A(r)\) is convex by \Cref{lem_surfaceconvex}\cref{it_intersectionconvex}, it contains the convex hull of \(B\bigl(z,\f r{4\sqrt d}\bigr)\cup\{y\}\).
We can conclude that \(A(r)\) is a John domain with \(K=\f{2r}{r/(4\sqrt d)}=8\sqrt d\).

We have
\begin{align}
\nonumber
\lm{B(x,r)\setminus A(r)}
&\leq
\lmb{\Bigl\{y:0<\dist(y,(B(x,r)\cap X)^\comp)\leq \f{\lambda r}{2dc_d}\Bigr\}}
\\
\nonumber
&\leq
\f{\lambda r}{2dc_d}\sm{\partial(B(x,r)\cap X)}
\\
\nonumber
&\leq
\f{\lambda r}{2dc_d}\sm{\partial B(x,r)}
\\
\nonumber
&=
\f\lambda{2c_d}\lm{B(x,r)}
\\
\label{eq_awayfromboundary}
&\leq
\f\lambda2\lm{B(x,r)\cap X}
,
\end{align}
where the last inequality holds because as observed above \(B(x,r)\cap X\) contains a ball with radius \(\f r2\) if \(X\) is a ball,
and a cube with sidelength \(\f r{\sqrt d}\) if \(X\) is a cube.
Then from
\[\f{\lm{X\cap E}}{\lm X}\geq\lambda\]
and \cref{eq_awayfromboundary} with \(r=\diam X\) we get
\[\f{\lm{A(\diam X)\cap E}}{\lm{A(\diam X)}}\geq\f{\lm{A(\diam X)\cap E}}{\lm X}\geq\lambda-\f\lambda2=\f\lambda2.\]
Since \(x\not\in\mc E\) we have \(\lm{E\cap B(x,r)}/r^d\rightarrow0\) for \(r\rightarrow0\).
By \cref{eq_Arlikeball} this implies that there is an \(r_0\) with
\[\f{\lm{A(r_0)\cap E}}{\lm{A(r_0)}}\leq\f\lambda2.\]
By continuity we conclude that there is an \(r_0\leq r\leq\diam X\) such that 
\[\f{\lm{A(r)\cap E}}{\lm{A(r)}} =\f\lambda2.\]
By \cref{eq_Arlikeball} and \Cref{lem_surfaceconvex}\cref{it_intersectionconvex} we have
\begin{equation}
\label{eq_dBdA}
\sm{\partial B(x,r)}
\lesssim
\smb{\partial B\Bigl(z,\f r{4\sqrt d}\Bigr)}
\leq
\sm{\partial A(r)}
.
\end{equation}
Because \(A(r)\) is a John domain it satisfies a relative isoperimetric inequality by \Cref{lem_isoperimetric},
so that we can apply \Cref{lem_isoperimetricconsequence} with \(X=A(r)\) and \(\varepsilon=\f12\)
and obtain
\begin{equation}
\label{eq_Aisoperimetric}
\sm{\partial A(r)}
\lesssim
\lambda^{-\f{d-1}d}\sm{\mb E\cap A(r)}
.
\end{equation}
Combining \cref{eq_dBdA} and \cref{eq_Aisoperimetric} we obtain \cref{eq_ballcontainsmuchboundary}, which finishes the proof. 
\end{proof}

Note that the following \Cref{lem_finMf} addresses the reduced boundary \(\rb E\) and not the measure theoretic boundary \(\mb E\).

\begin{lem}\label{lem_finMf}
Let \(\Omega\subset\mathbb{R}^d\) be an open set and let \(E\subset\mathbb{R}^d\) be measurable.
Then for every \(\lambda\in[0,1)\) and
for both the dyadic and the uncentered maximal operator with domain \(\Omega\)
we have
\(\mi E\cap\Omega\subset\{\M\ind E=1\}\).
For the uncentered maximal operator we furthermore have
\(\rb E\cap\Omega\subset\{\M\ind E=1\}\).
\end{lem}
This is a slightly more precise version of \(\M f\geq f\) almost everywhere for characteristic functions.

\begin{proof}
Let \(x\in\mi E\cap\Omega\).
Then for every \(\varepsilon>0\) there is a ball \(B\subset\Omega\) with center \(x\) and with \(\lm{B\setminus E}\leq\varepsilon\lm{B}\) and a dyadic cube \(Q\) with \(x\in Q\subset B\) and \(\lm{Q}\gtrsim\lm{B}\). 
That means \(\lm{Q\setminus E}\leq\varepsilon\lm{B}\lesssim\varepsilon\lm{Q}\).
We can conclude \(\M\ind E(x)=1\).

Let \(x\in\rb E\cap\Omega\).
It suffices to consider \(x=0\) and
\[\lim_{r\rightarrow0}\avint_{B(0,r)}\nu_E=(1,0,\ldots,0).\]
Then for \(r\) small enough we have \(0\in B_r=B((-r,0,\ldots,0),r+r^2)\subset\Omega\), and so by \Cref{lem_blowuprb} we obtain
\[
\lim_{r\rightarrow0}
\avint_{B_r}\ind E
=
\lim_{r\rightarrow0}
\f{\lm{\{y\in B_r:y_1<0\}}}{\lm{B_r}}
=
\lim_{r\rightarrow0}
\f{\lm{\{y\in B(0,r+r^2):y_1<r\}}}{\lm{B(0,r+r^2)}}
=1
.
\]
\end{proof}

\section{The dyadic maximal function}\label{sec_dyadic}

In this section we discuss the argument for the dyadic maximal operator.
It already showcases the main idea of the proof for the uncentered maximal operator.
For the superlevelset of the dyadic maximal operator we have
\[\{\M\ind E>\lambda\}=\bigcup\{\tx{dyadic cube }Q:\lm{E\cap Q}>\lambda\lm Q\}.\]
The first step in the proof of \Cref{eq_levelsetsdyadic} is
to consider only a finite set \(\Q\) of cubes \(Q\) with \(\lm{E\cap Q}>\lambda\lm Q\) instead of the whole set,
because this allows to write
\[\sm{\mb\bigcup\Q}\leq\sum_{Q\in\Q}\sm{\partial Q\cap\mb\bigcup\Q}.\]
From there we use approximation results to extend to the union of all cubes \(Q\) with \(\lm{E\cap Q}>\lambda\lm Q\).
The strategy for the uncentered maximal operator is similar,
but with cubes replaced by balls.
The main argument is \Cref{lem_varav},
which is more or less \Cref{eq_levelsetsdyadic} for the case that \(\{\M\ind E>\lambda\}\) consists of only one cube. 

\begin{pro}\label{lem_varav} Let \(0<\lambda\leq1\), let \(Q\) be a cube and let \(E\subset\mathbb{R}^d\) be a measurable set with \(\lm{E\cap Q}\geq\lambda\lm Q\). 
Then
\[\sm{\partial Q\setminus\mc E}\lesssim\lambda^{-\f{d-1}d}\sm{\mb E\cap\ti Q}.\]
\end{pro}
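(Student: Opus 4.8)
The plan is to derive the claimed estimate directly from the refined boxing inequality \Cref{cla_surfacedistanceboxingball} combined with the Vitali covering lemma. Since \(Q\) is a cube we have \(\mb Q=\partial Q\), so the hypothesis \(\lm{E\cap Q}=\lambda\lm Q\) places us exactly in the setting of \Cref{cla_surfacedistanceboxingball} with \(X=Q\). That lemma furnishes a cover \(\C\) of \(\partial Q\setminus\mc E\) by balls \(C\) with \(\diam C\leq2\diam Q\) such that
\[
\smb{\mb E\cap C\cap\Bigl\{y:\dist(y,Q^\comp)>\f{\lambda\diam C}{4d^{\f d2-1}}\Bigr\}}\gtrsim\lambda^{\f{d-1}d}\sm{\partial C}.
\]

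Because the diameters of the balls in \(\C\) are uniformly bounded by \(2\diam Q\), I would apply \Cref{lem_disjointcover} to extract a countable disjoint subfamily \(\tilde\C\subseteq\C\) with \(\bigcup\C\subseteq\bigcup5\tilde\C\), so that in particular \(\partial Q\setminus\mc E\subseteq\bigcup5\tilde\C\). Then I would chain together: countable subadditivity of \(\H^{d-1}\); the elementary bound \(\sm{\partial Q\cap 5C}\lesssim(\diam C)^{d-1}\lesssim\sm{\partial C}\), valid because \(\partial Q\) lies in \(2d\) hyperplanes, each meeting the ball \(5C\) in a set of \(\H^{d-1}\)-measure at most a dimensional constant times \((\diam C)^{d-1}\); and the displayed inequality rewritten as \(\sm{\partial C}\lesssim\lambda^{-\f{d-1}d}\smb{\mb E\cap C\cap\{\dots\}}\). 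This yields
\[
\sm{\partial Q\setminus\mc E}\leq\sum_{C\in\tilde\C}\sm{\partial Q\cap 5C}\lesssim\lambda^{-\f{d-1}d}\sum_{C\in\tilde\C}\smb{\mb E\cap C\cap\Bigl\{y:\dist(y,Q^\comp)>\f{\lambda\diam C}{4d^{\f d2-1}}\Bigr\}}.
\]

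To finish, I would observe that the sets appearing in this last sum are pairwise disjoint, since the balls \(C\in\tilde\C\) are, and that each is contained in \(\ti Q\): a point at positive distance from \(Q^\comp\) lies in the topological interior of \(Q\), hence in the measure theoretic interior. Therefore the sum is bounded by \(\sm{\mb E\cap\ti Q}\), which is the asserted estimate.

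The genuinely hard work — producing, near a point of \(\partial Q\) at which \(E\) has density zero, a ball that captures a definite proportion \(\lambda^{(d-1)/d}\) of \(\mb E\) uniformly away from \(\partial Q\) — is already contained in \Cref{cla_surfacedistanceboxingball}, which I may invoke. So the only point requiring care here is the bookkeeping in the last step: one must use the buffer ``\(\dist(y,Q^\comp)>\lambda\diam C/(4d^{\f d2-1})\)'' built into \Cref{cla_surfacedistanceboxingball} precisely to guarantee that the counted portions of \(\mb E\) sit strictly inside \(Q\), and hence contribute only to \(\sm{\mb E\cap\ti Q}\) rather than to \(\partial Q\), while the disjointness needed to sum these contributions without overcounting is supplied for free by the Vitali selection.
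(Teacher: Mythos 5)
Your proposal is correct and follows essentially the same route as the paper: apply \Cref{cla_surfacedistanceboxingball} to the cube, extract a disjoint Vitali subfamily whose dilates cover \(\partial Q\setminus\mc E\), bound each boundary piece by \(\sm{\partial C}\), and sum using the disjointness of the balls and the distance buffer to land inside \(\ti Q\). The only cosmetic difference is that you bound \(\sm{\partial Q\cap 5C}\) via the hyperplane count \(\lesssim(\diam C)^{d-1}\) where the paper simply uses \(\sm{\partial 5C}\); both are fine.
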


\begin{proof}[Proof of \Cref{lem_varav}]
We apply \Cref{cla_surfacedistanceboxingball} to \(X=\ti Q\)
and for the resulting cover use \Cref{lem_disjointcover} to extract a disjoint subcollection \(\C\) 
such that \(5\C\) still covers \(\partial Q\setminus\mc E\). 
Then
by \Cref{lem_surfaceconvex}\cref{it_intersectionconvex,it_perimeterconvex}
and \Cref{cla_surfacedistanceboxingball}
we have
\begin{align*}
\sm{\partial Q\setminus\mc E}
&\leq
\sum_{C\in\C}\sm{\partial Q\cap5C}
\\
&\leq
\sum_{C\in\C}\sm{\partial 5C}
\\
&\lesssim\lambda^{-\f{d-1}d}\sum_{C\in\C}\sm{\mb E\cap C\cap\ti Q}\\
&\leq\lambda^{-\f{d-1}d}\sm{\mb E\cap\ti Q}.
\end{align*}
\end{proof}

\begin{rem}\label{rem_varavball}
For \(\lambda\leq\f12\) \Cref{lem_varav} also follows directly from the relative isoperimetric inequality \cref{eq_isoperimetric} for \(Q\).
\Cref{lem_varav} also holds for \(Q\) being a ball.
\end{rem}

\begin{proof}[Proof of \Cref{eq_levelsetsdyadic}]
For each \(x\in\{\M\ind E>\lambda\}\cap \Omega\) there is a dyadic cube \(Q\subset\Omega\) with \(x\in Q\) and \(\lm{E\cap Q}>\lambda\lm Q\). 
Since there are only countably many dyadic cubes we can enumerate them as \(Q_1,Q_2,\ldots\). 
For each \(n\) let
\[
\Q_n
=
\{Q_i:\forall j=1,\ldots,n\tx{ with }j\neq i\tx{ we have }Q_i\not\subset Q_j\}
.
\]
Then \(\bigcup\Q_n=Q_1\cup\ldots\cup Q_n\) and thus
\[
\bigcup_nQ_n=\{\M\ind E>\lambda\}
.
\]
Because \(E\) and \(\mi E\) agree up to measure zero
and
\(\mi E\subset\{\M\ind E>\lambda\}\)
by \Cref{lem_finMf},
we have that \(\bigcup\Q_n\cup E\) converges to \(\{\M\ind E>\lambda\}\) in \(L^1_\loc(\Omega)\).
Therefore, by \Cref{lem_l1approx,lem_boundaryofunion} we obtain
\begin{align}
\nonumber\sm{\mb{\{\M\ind E>\lambda\}}\cap \Omega}&\leq\limsup_{n\rightarrow\infty}\sm{\mb{(\bigcup\Q_n\cup E)}\cap \Omega}\\
\label{eq_summandtoomany}&\leq\limsup_{n\rightarrow\infty}\sm{(\mb{\bigcup\Q_n}\setminus\mc E)\cap \Omega}+\sm{\mb E\cap \Omega}.
\end{align}
It is not necessary,
but in the line corresponding to \cref{eq_summandtoomany} in the proof for the uncentered Hardy-Littlewood maximal function
we can actually eliminate the term \(\sm{\mb E\cap \Omega}\)
thanks to \Cref{lem_finMf};
see \cref{eq_summandtoomanyagain} in \Cref{sec_uncentered} and the subsequent comment.
Here this is not so clear because for the dyadic maximal function \Cref{lem_finMf} is weaker.
But in any case, it suffices to estimate the first term on the right hand side of \cref{eq_summandtoomany}.
We invoke \Cref{lem_varav} and use that the cubes in \(\Q_n\) are disjoint and obtain
\begin{align*}
\sm{(\mb{\bigcup\Q_n}\setminus\mc E)\cap \Omega}&\leq\sum_{Q\in\Q_n}\sm{(\mb Q\setminus\mc E)\cap \Omega}\\
&\lesssim\sum_{Q\in\Q_n}\lambda^{-\f{d-1}d}\sm{\mb E\cap Q}\\
&\leq\lambda^{-\f{d-1}d}\sm{\mb E\cap \Omega\cap\{\M\ind E>\lambda\}}.
\end{align*}
\end{proof}

\Cref{lem_varav} readily implies \Cref{eq_levelsetsdyadic} because \(\{\M\ind E>\lambda\}\) is a disjoint union of such cubes.
Two balls however can have nontrivial intersections,
which is why the proof for the uncentered Hardy-Littlewood maximal operator is much more complicated than the proof for the dyadic maximal operator.

\section{The uncentered maximal function}\label{sec_uncentered}

In this section we prove \Cref{eq_levelsets}.
The main step is \Cref{pro_levelsets_finite_g}.
It is \Cref{lem_varav} for a set \(\B\) of finitely many balls \(B\) with \(\lm{B\cap E}>\lambda\lm B\) instead of one cube.
\Cref{pro_levelsets_finite_g} comes with an additional but harmless factor \((1-\log\lambda)\).
We will show in \Cref{sec_optimal} that this factor can be removed.

\begin{lem}\label{cla_largeboundaryinball}
Let \(K>0\), let \(C\) be a ball and let be \(\B\) a finite set of balls \(B\) with \(\diam(B)\geq K\diam(C)\). 
Then
\[\sm{\mb{\bigcup\B}\cap C}\lesssim (K^{-d}+1)\sm{\partial C}.\]
\end{lem}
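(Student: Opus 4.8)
The plan rests on one observation: the union $\bigcup\B$ satisfies an interior ball condition whose radius is comparable to $K\diam C$. First I would note that by $\Cref{lem_boundaryofunion}$, applied inductively to the finitely many balls of $\B$, one has $\mb{\bigcup\B}\subset\bigcup_{B\in\B}\mb B=\bigcup_{B\in\B}\partial B$; so every $x\in\mb{\bigcup\B}$ lies on a sphere $\partial B$ with $B\in\B$, and this open ball $B$ — of radius $\operatorname{rad}(B)\geq K\diam(C)/2=:\rho$ — is contained in $\bigcup\B$ and touches $\partial(\bigcup\B)$ at $x$. Hence $\bigcup\B$ has, at every boundary point, a tangent interior ball of radius at least $\rho$.

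Next I would turn this into a volume estimate via a tube/coarea argument. For $x\in\mb{\bigcup\B}\cap C$ choose an inward normal $n(x)$ of such a tangent interior ball (the choice matters only on the $\H^{d-1}$-null set of points lying on two spheres) and put $p_s(x)=x+s\,n(x)$ for $0<s\leq\rho/2$; a direct computation inside that interior ball gives $\dist(p_s(x),(\bigcup\B)^\comp)=s$. Thus $p_s$ sends $\mb{\bigcup\B}\cap C$ into the inner parallel surface $\Sigma_s:=\{y\in\bigcup\B:\dist(y,(\bigcup\B)^\comp)=s\}$, with area distortion at least $(1-s/\operatorname{rad}(B))^{d-1}\geq2^{-(d-1)}$, injectively off a lower-dimensional set; and since $p_s(x)$ stays within $\rho/2$ of $x\in C$, its image lies in the ball $B'$ concentric with $C$ of radius $\operatorname{rad}(C)+\rho$. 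So $\sm{\Sigma_s\cap B'}\gtrsim\sm{\mb{\bigcup\B}\cap C}$ for $0<s\leq\rho/2$. Since $u(y):=\dist(y,(\bigcup\B)^\comp)$ is $1$-Lipschitz with $|\nabla u|=1$ a.e., the coarea formula gives $\lm{B'}\geq\int_0^{\rho/2}\sm{\Sigma_s\cap B'}\intd s\gtrsim\rho\,\sm{\mb{\bigcup\B}\cap C}$, whence $\sm{\mb{\bigcup\B}\cap C}\lesssim(\diam C)^d/\rho\lesssim K^{-1}\sm{\partial C}$, which gives the claimed bound.

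The one place I expect to need genuine care is this coarea / change-of-variables step for the non-smooth surface $\mb{\bigcup\B}$: one must verify that each $\Sigma_s$ retains a fixed fraction of the area of $\mb{\bigcup\B}\cap C$ despite $p_s$ possibly collapsing pieces, and that the corner set — a finite union of lower-dimensional sphere intersections — contributes nothing; both are settled by working on the countably many smooth spherical faces of $\mb{\bigcup\B}$ and discarding their overlaps. A variant avoiding coarea, which produces the $K^{-d}$ form directly, is to cover $\mb{\bigcup\B}\cap C$ by boundedly overlapping balls of radius $\rho$ (once $\rho\lesssim\diam C$ only $\lesssim K^{-d}$ are needed) and to bound $\mb{\bigcup\B}$ inside each such ball by $\lesssim\rho^{d-1}\lesssim(\diam C)^{d-1}$, using that every $B\in\B$ has radius $\gtrsim\rho$, so at that scale no sphere is more than mildly curved; summing over the cover gives $\sm{\mb{\bigcup\B}\cap C}\lesssim K^{-d}(\diam C)^{d-1}\lesssim K^{-d}\sm{\partial C}$.
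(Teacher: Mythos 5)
Your main argument takes a genuinely different route from the paper's, and in the regime that is actually used it works and even gives a sharper bound. The paper handles the case of comparatively large balls via \Cref{lem_minangle}: boundary pieces coming from balls whose centers lie in a fixed \(\pi/4\)-sector form a \(1\)-Lipschitz graph inside \(C\), so a finite net of directions gives \(\sm{\mb{\bigcup\B}\cap C}\lesssim\sm{\partial C}\), and the factor \(K^{-d}\) only arises from covering \(C\) by \(\lesssim (N/K)^d\) smaller balls. You instead use the interior tangent ball of radius \(\rho\gtrsim K\diam C\) at every boundary point and an inward normal flow plus coarea; for \(K\lesssim1\) (which is the case in every application in the paper, where \(K\) is \(1/6\), \(1/16\), \(1/20\), \(\sim\lambda^{1/(d+1)}\), \dots) this yields the stronger rate \(K^{-1}\leq K^{-d}\), and it avoids the net argument entirely.

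Two points in your write-up need repair. First, the collapse you should worry about is on the image side, not the preimage side: \(p_s\) restricted to a single spherical face is a homothety (injective, area distortion \(\geq2^{-(d-1)}\)), but two \emph{different} faces can map to the same \(y\in\Sigma_s\), and ``discarding overlaps of the faces'' does not address this. The fix is that such a \(y\) has two distinct nearest points in \((\bigcup\B)^\comp\) at distance \(s\) (note \(x\in\mb{\bigcup\B}\) does lie in \((\bigcup\B)^\comp\) since the union is open, so \(x\) really is a nearest point of \(p_s(x)\)); the set of points with non-unique nearest point is Lebesgue-null, hence by coarea its trace on \(\Sigma_s\) is \(\H^{d-1}\)-null for a.e.\ \(s\), which suffices because you integrate in \(s\) anyway. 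Equivalently, run the whole argument \(d\)-dimensionally: the maps \((x,s)\mapsto x+s\,n(x)\) on the separate faces have essentially disjoint images in \(B'\). Second, replacing \(\lm{B'}\) by \((\diam C)^d\) requires \(\rho\lesssim\diam C\), i.e.\ \(K\lesssim1\); for large \(K\) you must cap the push-in depth at \(\sim\diam C\), and then you only obtain \(\lesssim\sm{\partial C}\). This matches what the paper's own proof gives for \(K\geq N\) (the literal bound \(K^{-d}\sm{\partial C}\) cannot hold for large \(K\): a single huge sphere through the center of \(C\) already contributes \(\approx c_d\sm{\partial C}\)), so it is harmless, but your final claim ``\(\lesssim K^{-1}\sm{\partial C}\), which gives the claimed bound'' is only valid for \(K\leq1\) and should say so. Finally, your coarea-free ``variant'' is essentially the paper's reduction step, except that the assertion that inside a ball of radius \(\rho\) the boundary of a union of balls of radius \(\gtrsim\rho\) has measure \(\lesssim\rho^{d-1}\) is precisely the nontrivial content that \Cref{lem_minangle} and the Lipschitz-graph argument supply; as sketched, that variant assumes what is to be proved.
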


\begin{figure}
\centering
\includegraphics{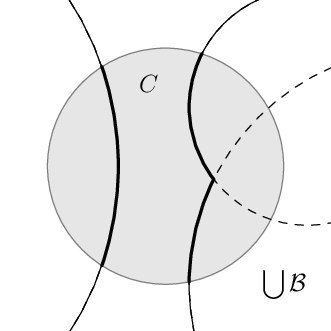}
\caption{The objects in \Cref{cla_largeboundaryinball}.}
\end{figure}
The rate \(K^{-d}\) does not play a role in the application.

\begin{proof}
By translation and scaling it suffices to consider the case \(C=B(0,1)\).
Let \(B(x,r)\) be a ball with \(|x|\geq4d+1\) whose boundary intersects \(B(0,1)\),
which means \(4d<r<4d+2\).
For any point \(y=(y_1,\ldots,y_d)\in\mathbb{R}^d\) denote \(\proj y=(y^1,\ldots,y^{d-1})\).
Assume that \(|x^d|=\max\{|x_1|,\ldots,|x_d|\}\), so that
\[
|\proj x|^2=|x|^2-x_d^2\leq\Bigl(1-\f1d\Bigr)|x|^2
.
\]
Then for every \(y\in B(0,1)\) we have
\begin{align*}
|\proj y-\proj x|
&\leq
|\proj x|+1
\leq
\sqrt{1-\f1d}|x|+1
\leq
\Bigl(1-\f1{2d}\Bigr)(r+1)+1
=r
\Bigl(1+\f2r-\f1{2d}-\f1{2dr}\Bigr)
\leq
r
,
\end{align*}
and
\[
x_d-y_d
\geq
\sqrt{\f1d}|x|
-1
\geq
\f{r-(\sqrt d+1)}{\sqrt d}
>0
.
\]
Therefore the function
\[
y\mapsto\varphi(\hat y)
=
x_d-\sqrt{r^2-|\proj{y-x}|^2}
\]
is well defined for \(y\in B(0,1)\),
we have
\[
B(x,r)\cap B(0,1)
=
\{y\in B(0,1):y_d>\varphi(\hat y)\}
,
\]
and for \(y\in\partial B(x,r)\cap B(0,1)\) the gradient of \(\varphi\) in \(y_1,\ldots,y_{d-1}\) is bounded by
\[
|\nabla\varphi(\hat y)|
=
\f{|\proj{x-y}|}{\sqrt{r^2-|\proj{x-y}|^2}}
=
\f{|\proj{x-y}|}{|x_d-y_d|}
\leq
\f{\sqrt dr}{r-(\sqrt d+1)}
\leq
\f{4d^{\f32}}{4d-(\sqrt d+1)}
\leq
2\sqrt d
.
\]
For the case that all balls \(B\in\B\) have radius at least \(4d\)
we can conclude that the boundary of the union of all balls of the above form is a piece of the infimum of \(2\sqrt d\)-Lipschitz graphs, and thus itself a piece of a \(2\sqrt d\)-Lipschitz graph.
We can conclude that
\begin{align*}
\smb{\partial
\bigcup\bigl\{B(x,r)\in\B:x_d=\max\{|x_1|,\ldots,|x_d|\}\bigr\}\cap B(0,1)
}
&\leq
\sqrt{4d+1}\sigma_{d-1}
\\
&=
\f{\sqrt{4d+1}\sigma_{d-1}}{d\sigma_d}
\sm{\partial B(0,1)}
.
\end{align*}
By rotation we obtain the same bound for the union of those balls \(B(x,r)\in\B\) with \(\pm x_i=\max\{|x_1|,\ldots,|x_d|\}\) for any \(i=1,\ldots,d\) and any sign.
This finishes the proof for \(K\geq4d\).

If \(K<4d\) then we cover \(B(0,1)\) by \(\lesssim\bigl(\f{4d}K\bigr)^d\) many balls \(C_1,C_2,\ldots\) so that for each \(i\) we have \(\diam(B)\geq 4d\diam(C_i)\). 
Then
\[
\sm{\mb{\bigcup\B}\cap B(0,1)}
\leq
\sum_i \sm{\mb{\bigcup\B}\cap C_i}
\lesssim
\sum_i \sm{\partial C_i}
\lesssim
\Bigl(\f{4d}K\Bigr)^d\sm{\partial B(0,1)}.
\]
\end{proof}

In this section for a set of balls \(\B\) we denote by \(\B_n\) the set of those \(B\in\B\) with \(\diam(B)\in[\f12,1)2^n\).
Further define \(\B_{>n}=\bigcup_{k>n}\B_k\) and \(\B_{\geq n},\B_{<n},\ldots\) accordingly.

\begin{lem}\label{cla_surfacedistanceboxing}
Let \(\lambda\in(0,1)\), let \(E\subset\mathbb{R}^d\) be measurable and let \(\B\) be a finite set
of balls \(B\) with \(\lm{E\cap B}>\lambda\lm B\). 
Then there is a set of balls \(\C\) such that for each \(n\in\mathbb{Z}\) the following holds.
\begin{enumerate}
\item\label{it_disjointscale} The balls in \(\C_n\) are disjoint.
\item\label{it_coverscale} The boundary piece \(\mb{\bigcup\B}\cap\mb{\bigcup\B_{n-1}}\setminus\mc E\) is covered by \(5\C_{\leq n}\).
\item\label{it_closetoboundary} Each \(C\in\C_n\) has distance at most \(2\diam(C)\) to \(\mb{\bigcup\B}\setminus\mc E\).
\item
We have
\(
\smb{\mb E\cap\bigl\{x\in C:\dist(x,(\bigcup\B)^\comp)\geq\lambda d^{-1}2^{n-d-2}\bigr\}}
\gtrsim
\lambda^{\f{d-1}d}
\sm{\partial C}
.
\)
\label{it_enoughvardist}
\end{enumerate}
\end{lem}

\begin{proof}
Apply \Cref{cla_surfacedistanceboxingball} to each ball in \(\B\) and denote by \(\tilde\C\) the union of all of these balls. 
They cover \(\mb{\bigcup\B}\setminus\mc E\). 
In particular \(\mb{\bigcup\B}\cap\mb{\bigcup\B_{n-1}}\setminus\mc E\) is covered by \(\tilde\C_{\leq n}\). 
Let \(n\in\mathbb{Z}\). 
By \Cref{lem_disjointcover} there is a subcollection \(\C_n\) of \(\tilde\C_n\) of disjoint balls with \(\bigcup\tilde\C_n\subset\bigcup5\C_n\). 
That means \Cref{it_disjointscale} and \Cref{it_coverscale} are satisfied. 
Now remove those balls \(C\) from \(\C_n\) such that \(5C\) does not touch \(\mb{\bigcup\B}\setminus\mc E\). 
Then \Cref{it_coverscale} still holds and we also get \Cref{it_closetoboundary}.

Let \(C\in\C_n\) and let \(B\in\B\) be the ball which gave rise to \(C\). 
We use \(B\subset\bigcup\B\)
and \Cref{cla_surfacedistanceboxingball}
to obtain
\begin{align*}
&\quad\sm{\mb E\cap\{x\in C:\dist(x,\bigcup\B^\comp)>\lambda d^{-1}2^{n-d-2}\}}\\
&\geq\smb{\mb E\cap\Bigl\{x\in C:\dist(x,\bigcup\B^\comp)>\f{\lambda\diam C}{2^{d+2}d}\Bigr\}}\\
&\geq\smb{\mb E\cap\Bigl\{x\in C:\dist(x,B^\comp)>\f{\lambda\diam C}{2^{d+2}d}\Bigr\}}\\
&\gtrsim\lambda^{\f{d-1}d}\sm{\partial C},
\end{align*}
proving \Cref{it_enoughvardist}.
\end{proof}

\begin{pro}\label{pro_levelsets_finite_g} Let \(\lambda\in(0,1)\). 
Let \(E\subset\mathbb{R}^d\) be a set of locally finite perimeter and let \(\B\) be a finite set of balls such that for each \(B\in\B\) we have \(\lm{E\cap B}>\lambda\lm B\).
Then
\[\sm{\mb{\bigcup\B}\setminus\mc E}\lesssim\lambda^{-\f{d-1}d}(1-\log\lambda)\smb{\mb E \cap\mib{\bigcup\B}}.\]
\end{pro}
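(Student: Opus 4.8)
The plan is to decompose the boundary $\mb{\bigcup\B}\setminus\mc E$ according to the dyadic scale of the balls that generate it, and on each scale $n$ use the covering $\C_n$ produced by \Cref{cla_surfacedistanceboxing}. Write $U=\bigcup\B$. For a point $x\in\mb U\setminus\mc E$, assign it to the largest $n$ such that $x\in\mb{\bigcup\B_{n-1}}$ — i.e.\ decompose $\mb U\setminus\mc E$ into the pieces $S_n:=\mb U\cap\mb{\bigcup\B_{n-1}}\setminus\mc E$, minus the part already handled at coarser scales; by \Cref{cla_largeboundaryinball} (with the roles reversed, bounding the contribution of much \emph{smaller} balls) the part of $\mb U$ near $x$ coming from balls of diameter $\ll 2^n$ contributes only a geometrically small fraction of $\sm{\partial C}$ for the relevant covering balls, so it can be absorbed. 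By \Cref{cla_surfacedistanceboxing}\,\cref{it_coverscale}, $S_n$ is covered by $5\C_{\le n}$, and by \cref{it_disjointscale} the balls in each $\C_n$ are disjoint.

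Then for each fixed scale $n$ I would estimate
\[
\sm{S_n}\leq\sum_{k\leq n}\sum_{C\in\C_k}\sm{\partial(5C)}\lesssim\sum_{k\leq n}\sum_{C\in\C_k}\sm{\partial C}\lesssim\lambda^{-\f{d-1}d}\sum_{k\leq n}\sum_{C\in\C_k}\smb{\mb E\cap C\cap\Bigl\{x:\dist\bigl(x,U^\comp\bigr)\geq\lambda d^{1-\f d2}2^{k-3}\Bigr\}},
\]
using \cref{it_enoughvardist}. Within a single scale $k$ the balls of $\C_k$ are disjoint, so the inner sum is at most $\sm{\mb E\cap U}$; the loss comes from summing over $k\leq n$, i.e.\ the logarithmic factor. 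The key point — and this is where the distance constraint in \cref{it_enoughvardist} earns its keep — is that a piece of $\mb E$ near $\partial U$ can only be charged at scales $k$ for which $\dist(x,U^\comp)\gtrsim\lambda 2^k$; a given point of $\mb E$ at distance $\delta$ from $U^\comp$ is therefore charged at only $\lesssim 1-\log\lambda$ scales $k$ (those with $2^k\lesssim\delta/\lambda$, intersected with the $\lesssim\log(1/\lambda)$-many scales actually contributing after the \Cref{cla_largeboundaryinball} truncation removes far-smaller balls), whence
\[
\sum_{k}\smb{\mb E\cap C_k\cap\Bigl\{x:\dist\bigl(x,U^\comp\bigr)\geq\lambda d^{1-\f d2}2^{k-3}\Bigr\}}\lesssim(1-\log\lambda)\,\sm{\mb E\cap U}.
\]
Combining this with the fact that $\mb U\setminus\mc E=\bigcup_n S_n$ is itself, after the \Cref{cla_largeboundaryinball} argument, effectively covered by finitely many of the $S_n$ up to a summable geometric tail gives the claimed bound $\sm{\mb U\setminus\mc E}\lesssim\lambda^{-\f{d-1}d}(1-\log\lambda)\sm{\mb E\cap U}$.

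**Main obstacle.** The delicate part is the bookkeeping that prevents a single portion of $\mb E$ from being counted at infinitely many scales: one must simultaneously use \Cref{cla_largeboundaryinball} to discard the contribution of balls much smaller than $C$ (which caps the number of \emph{relevant coarser} scales at $\sim\log(1/\lambda)$) and the $\dist(\cdot,U^\comp)\gtrsim\lambda 2^k$ constraint to cap the number of scales at which a fixed near-boundary point of $\mb E$ is charged. Making these two truncations compatible — and checking that the geometric tail from the \Cref{cla_largeboundaryinball} step is genuinely summable against $\sm{\partial C}$ on each scale — is the technical heart of the argument; everything else is a repackaging of \Cref{cla_surfacedistanceboxing} and the Vitali lemma.
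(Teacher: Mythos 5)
There is a genuine gap, and it sits exactly where you flag the ``technical heart'': your mechanism for capping the number of scales at which a piece of \(\mb E\) is charged does not exist. You invoke \Cref{cla_largeboundaryinball} ``with the roles reversed'' to discard or absorb the contribution of balls much \emph{smaller} than the covering ball \(C\); the lemma gives no such control -- it bounds \(\sm{\mb{\bigcup\B'}\cap C}\) only when every ball of \(\B'\) has diameter \(\geq K\diam C\), and in the opposite direction no bound is possible (a union of many tiny balls inside \(C\) can have arbitrarily large perimeter there). The paper never needs to control small balls at all: it rearranges the decomposition so that the covering balls of scale \(n\) are only ever paired with \(\mb{\bigcup\B_{\geq n-1}}\), i.e.\ it writes \(\mb{\bigcup\B}\setminus\mc E=\bigcup_n\mb{\bigcup\B}\cap\mb{\bigcup\B_{\geq n-1}}\cap\bigcup5\C_n\), so each \(C\in\C_n\) is used exactly once and \Cref{cla_largeboundaryinball} applies in its legitimate direction. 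Your bookkeeping \(\sm{S_n}\leq\sum_{k\leq n}\sum_{C\in\C_k}\sm{\partial(5C)}\), summed over \(n\), instead charges every scale-\(k\) covering ball once for each \(n\geq k\), an unbounded multiplicity that the ``geometric tail'' claim cannot repair because there is no geometric decay to appeal to.

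The second missing ingredient is \Cref{cla_surfacedistanceboxing}\,\cref{it_closetoboundary}, which you never use. The constraint from \cref{it_enoughvardist} only says a point of \(\mb E\) at distance \(\delta\) from \(\bigcup\B^\comp\) can be charged at scales \(k\) with \(\lambda2^k\lesssim\delta\); that is an upper bound on \(k\) alone, and by itself leaves infinitely many admissible smaller scales. The cap of \(\lesssim1-\log\lambda\) charges per point comes from combining it with the \emph{upper} distance bound \(\dist(x,\bigcup\B^\comp)\lesssim2^{k}\) for \(x\) in a scale-\(k\) covering ball, which is exactly what \cref{it_closetoboundary} provides (each \(C\in\C_k\) lies within \(2\diam C\) of \(\mb{\bigcup\B}\setminus\mc E\)); together they confine the scale-\(k\) charge to the annulus \(\{\lambda d^{1-\f d2}2^{k-3}\leq\dist(x,\bigcup\B^\comp)\leq2^{k+1}\}\), and a fixed \(\delta\) meets only \(\lesssim1-\log\lambda\) such annuli. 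Your outline attributes this cap to the (invalid) small-ball truncation instead, so as written the argument does not close; with the paper's rearrangement and with \cref{it_closetoboundary} restored, it does.
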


\Cref{pro_levelsets_finite_g} is the key ingredient in the proof of \Cref{eq_levelsets}.
The idea of the proof of \Cref{pro_levelsets_finite_g} is that we want to split \(\mb{\bigcup\B}\) into pieces
according to how far away a piece of \(\mb{\bigcup\B}\) is from a significant portion of \(E\),
and then identify for each such piece of \(\mb{\bigcup\B}\) a corresponding piece of \(\mb E\) with comparable size. 

\begin{proof}[Proof of \Cref{pro_levelsets_finite_g}]
We use \Cref{cla_surfacedistanceboxing}. 
We first rearrange \(\mb\bigcup\B\setminus\mc E\) and divide it according to the \((\C_n)_n\) in \Cref{cla_surfacedistanceboxing} and apply \Cref{cla_largeboundaryinball}.
We obtain
\begin{align*}
\sm{\mb{\bigcup\B}\setminus\mc E}&=\smb{\bigcup_k\mb{\bigcup\B}\cap\mb{\bigcup\B_k}\setminus\mc E}\\
&=\smb{\bigcup_k\mb{\bigcup\B}\cap\mb{\bigcup\B_k}\cap\bigcup_{n\leq k+1}\bigcup5\C_n}\\
&=\smb{\bigcup_n\bigcup_{k\geq n-1}\mb{\bigcup\B}\cap\mb{\bigcup\B_k}\cap\bigcup5\C_n}\\
&=\smb{\bigcup_n\mb{\bigcup\B}\cap\mb{\bigcup\B_{\geq n-1}}\cap\bigcup5\C_n}\\
&\leq\sum_n\sm{\mb{\bigcup\B}\cap\mb{\bigcup\B_{\geq n-1}}\cap\bigcup5\C_n}\\
&\leq\sum_n\sum_{C\in\C_n}\sm{\mb{\bigcup\B}\cap\mb{\bigcup\B_{\geq n-1}}\cap5C}\\
&\lesssim\sum_n\sum_{C\in\C_n}\sm{\partial C}.
\end{align*}
In what follows we apply first \Cref{it_enoughvardist}, then \Cref{it_disjointscale} and \Cref{it_closetoboundary}.
We obtain
\begin{align*}
\sum_{C\in\C_n}\sm{\partial C}
&\lesssim\lambda^{-\f{d-1}d}\sum_{C\in\C_n}\sm{\mb E\cap\{x\in C:\dist(x,\bigcup\B^\comp)\geq\lambda d^{-1}2^{n-d-2}\}}\\
&=\lambda^{-\f{d-1}d}\sm{\mb E\cap\{x\in\bigcup\C_n:\dist(x,\bigcup\B^\comp)\geq\lambda d^{-}2^{n-d-2}\}}\\
&\leq\lambda^{-\f{d-1}d}\sm{\mb E\cap\{x:\lambda d^{-1}2^{n-d-2}\leq\dist(x,\bigcup\B^\comp)\leq2^{n+1}\}}.
\end{align*}
Now we sum over \(n\).
Since for a fixed number \(r\in\mathbb{R}\) the condition
\(\lambda d^{-1}2^{n-d-2}\leq r\leq2^{n+1}\)
can only occur for \(d+3+\log_2d-\log_2\lambda\) many \(n\in\mathbb{Z}\),
we can bound
\begin{align*}
\sm{\mb{\bigcup\B}\setminus\mc E}
&\lesssim
\lambda^{-\f{d-1}d}\sum_n\sm{\mb E\cap\{x:\lambda d^{-1}2^{n-d-2}\leq\dist(x,\bigcup\B^\comp)\leq2^{n+1}\}}\\
&\lesssim\lambda^{-\f{d-1}d}(1-\log\lambda)\sm{\mb E\cap\bigcup\B}.
\end{align*}

\end{proof}

\begin{rem}
If the balls in \(\bigcup_n\C_n\) were disjoint then we could get rid of the factor \(1-\log\lambda\) by using \Cref{rem_varavball} instead of \Cref{it_enoughvardist}.
\end{rem}

Now we extend \Cref{pro_levelsets_finite_g} to the whole set \(\{\M\ind E>\lambda\}\). 

\begin{proof}[Proof of \Cref{eq_levelsets}]
Note that
\[\{\M\ind E>\lambda\}=\bigcup\{B\subset \Omega:\lm{B\cap E}>\lambda\lm B\}.\]
First we pass to a countable set of balls.
By the Lindel\"of property, for example Proposition~1.5 in \cite{MR2867756},
there is a sequence of balls with
\[\{\M\ind E>\lambda\}= B_1\cup B_2\cup\ldots\]
such that for each \(i\) we have \(\lm{E\cap B_i}>\lambda\lm{B_i}\). 
Denote \(\B_n=\{B_1,\ldots,B_n\}\). 
Then \(\bigcup\B_n\) converges to \(\{\M\ind E>\lambda\}\) in \(L^1_\loc(\Omega)\).
Furthermore, by \Cref{lem_finMf} we have
\[
\bigcup\B_n
\subset
\bigcup\B_n\cup\mi E
\subset
\{\M\ind E>\lambda\}
,
\]
which means that also \(\bigcup\B_n\cup E\) converges to \(\{\M\ind E>\lambda\}\) in \(L^1_\loc(\Omega)\).
Since \(E\) and \(\mi E\) agree up to a set of measure zero we have \(\mc{(\mi E)}=\mc E\) and \(\mb{(\mi E)}=\mb E\).
We apply the approximation using \Cref{lem_l1approx} and then divide the boundary using \Cref{lem_boundaryofunion} and obtain
\begin{align}
\nonumber\sm{\mb{\{\M\ind E>\lambda\}}\cap \Omega}&\leq\limsup_{n\rightarrow\infty}\sm{\mb{(\bigcup\B_n\cup\mi E)}\cap \Omega}\\
\label{eq_summandtoomanyagain}&\leq\limsup_{n\rightarrow\infty}\sm{\mb{\bigcup\B_n}\setminus\mc E\cap \Omega}+\smb{\mb E\setminus\mib{\bigcup\B_n}\cap \Omega}.
\end{align}
By \Cref{lem_finMf} the second summand is bounded by \(\sm{\mb E\cap \Omega\cap \{\M\ind E>\lambda\}}\).
In fact, if \(\sm{\mb E\cap \Omega\cap \{\M\ind E>\lambda\}}\) is finite
then the second summand in \cref{eq_summandtoomanyagain} even goes to \(0\) for \(n\rightarrow\infty\).
This is due to \Cref{lem_finMf} for the uncentered maximal function,
because
\[\mib{\bigcup\B_n}\supset\bigcup\B_n\]
which is an increasing sequence in \(n\) which exhausts \(\{\M\ind E>\lambda\}\).
In any case, it remains to estimate the first summand in \cref{eq_summandtoomanyagain}
which we do using \Cref{pro_levelsets_finite_g}
\begin{align*}
\sm{\mb{\bigcup\B_n}\setminus\mc E}&\lesssim\lambda^{-\f{d-1}d}(1-\log\lambda)\sm{\mb E\cap\bigcup\B_n}\\
&\leq\lambda^{-\f{d-1}d}(1-\log\lambda)\sm{\mb E\cap\{\M\ind E>\lambda\}}.
\end{align*}
\end{proof}

\section{The optimal rate in \texorpdfstring{\(\lambda\)}{lambda}}\label{sec_optimal}

In this section we prove the following improvement of \Cref{eq_levelsets}.

\begin{pro}\label{eq_levelsets_optimal}
Let \(\M\) be the local uncentered maximal operator. 
Let \(E\subset\mathbb{R}^d\) be a set with locally finite perimeter and let \(\lambda\in(0,1)\).
Then 
\[\sm{\mb{\{\M\ind E> \lambda\}}\cap\Omega}\lesssim\lambda^{-\f{d-1}d}\sm{\mb E\cap\{\M\ind E>\lambda\}}.\]
\end{pro}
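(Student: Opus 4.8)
The goal is to remove the spurious factor $1-\log\lambda$ from \Cref{eq_levelsets}, and by inspecting the proofs in \Cref{sec_uncentered} the loss came entirely from one place: in the proof of \Cref{pro_levelsets_finite_g} the balls in $\bigcup_n\C_n$ are disjoint only within each fixed scale $\C_n$, not across scales, so when we sum $\sm{\mb E\cap\bigcup\C_n\cap\{\dist(x,\bigcup\B^\comp)\geq\lambda d^{1-d/2}2^{n-3}\}}$ over $n$ a single point of $\mb E$ can be counted for $\approx 1-\log\lambda$ many $n$. So the plan is to rerun the argument of \Cref{pro_levelsets_finite_g}, but arrange the cover $\C$ so that either the balls are globally disjoint, or the overlap of the annular regions $\{\lambda d^{1-d/2}2^{n-3}\leq\dist(x,\bigcup\B^\comp)\leq 2^{n+1}\}$ over the scales that actually occur is bounded independently of $\lambda$. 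The cleanest route, as hinted in the remark after \Cref{pro_levelsets_finite_g}, is to make the balls disjoint and then use \Cref{rem_varavball} (equivalently, \Cref{lem_varav} for balls) in place of item~\Cref{it_enoughvardist} of \Cref{cla_surfacedistanceboxing}.

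Concretely: as in \Cref{sec_uncentered} reduce by \Cref{lem_l1approx}, \Cref{lem_boundaryofunion} and the Lindelöf argument to proving $\sm{\mb{\bigcup\B}\setminus\mc E}\lesssim\lambda^{-(d-1)/d}\sm{\mb E\cap\bigcup\B}$ for a \emph{finite} family $\B$ of balls with $\lm{E\cap B}>\lambda\lm B$. Apply \Cref{cla_surfacedistanceboxingball} to each $B\in\B$ to get, for every $x\in\partial B\setminus\mc E$, a ball $C=C(x)$ centered at $x$ with $\diam C\leq 2\diam B$ satisfying \cref{eq_ballcontainsmuchboundary}; the union of all these covers $\mb{\bigcup\B}\setminus\mc E$. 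Now instead of Vitali-selecting scale by scale, select a \emph{single} disjoint subfamily $\C$ from the \emph{whole} collection (across all scales) by the Vitali covering lemma \Cref{lem_disjointcover} — this is legitimate because the radii are bounded by $\diam(\bigcup\B)$ — so that $5\C$ still covers $\mb{\bigcup\B}\setminus\mc E$, and discard balls whose dilate $5C$ misses $\mb{\bigcup\B}\setminus\mc E$. Then
\[
\sm{\mb{\bigcup\B}\setminus\mc E}\leq\sum_{C\in\C}\sm{\partial 5C}\lesssim\sum_{C\in\C}\sm{\partial C}\lesssim\lambda^{-\f{d-1}d}\sum_{C\in\C}\sm{\mb E\cap C},
\]
where the last step is \cref{eq_ballcontainsmuchboundary} (dropping the $\dist$-constraint on the region), and since the $C\in\C$ are now \emph{pairwise disjoint} the sum telescopes to $\lambda^{-(d-1)/d}\sm{\mb E\cap\bigcup\C}\leq\lambda^{-(d-1)/d}\sm{\mb E\cap\bigcup\B}$, with no logarithmic factor. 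Finally assemble exactly as in the proof of \Cref{eq_levelsets}, using \Cref{lem_finMf} to replace $\bigcup\B$ by $\{\M\ind E>\lambda\}$ on the right and to kill the $\sm{\mb E\setminus\mi{(\bigcup\B_n)}}$ term in the limit.

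The one point that needs care — and is the main obstacle — is whether a \emph{global} (cross-scale) Vitali selection still leaves $5\C$ covering $\mb{\bigcup\B}\setminus\mc E$ while the balls stay inside the region where \cref{eq_ballcontainsmuchboundary} supplies enough boundary of $E$. This is fine: \Cref{lem_disjointcover} only needs the diameters bounded (they are, by $2\diam(\bigcup\B)$ since $\B$ is finite), and each selected $C$ individually satisfies \cref{eq_ballcontainsmuchboundary} regardless of the others — the earlier proof only invoked the per-scale structure to control cross-scale overlaps of the \emph{annuli}, which we no longer form. What one does lose is the fine information that $C$ sits near $\mb{\bigcup\B}$ at the matching scale (items \Cref{it_closetoboundary,it_enoughvardist}); but for the optimal bound we do not need to intersect with scale-adapted annuli at all, we only need the raw inequality $\sm{\mb E\cap C}\gtrsim\lambda^{(d-1)/d}\sm{\partial C}$ together with disjointness. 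So in effect the whole machinery of \Cref{cla_surfacedistanceboxing} (the scale bookkeeping $\B_n$, $\B_{\geq n}$, and \Cref{cla_largeboundaryinball}) is bypassed in favour of a single clean Vitali selection, and the $1-\log\lambda$ disappears.
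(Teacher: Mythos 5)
Your reduction to a finite family \(\B\) and the final assembly are fine, but the core step is unjustified and in fact false as written: after a single cross-scale Vitali selection, the inequality \(\sm{\mb{\bigcup\B}\setminus\mc E}\leq\sum_{C\in\C}\sm{\partial 5C}\) does not follow from the covering \(\mb{\bigcup\B}\setminus\mc E\subset\bigcup5\C\). A cover by balls controls Hausdorff \emph{content}, not \(\H^{d-1}\)-measure; to pass from the cover to \(\sum_C\sm{\partial C}\) one needs \(\sm{\mb{\bigcup\B}\cap 5C}\lesssim\sm{\partial C}\), which is exactly \Cref{cla_largeboundaryinball} and requires every ball of \(\B\) whose boundary meets \(5C\) to have diameter \(\gtrsim\diam C\). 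The scale bookkeeping you discard exists precisely to supply this hypothesis: in the paper, \(\C_n\) is only asked to cover \(\mb{\bigcup\B}\cap\mb{\bigcup\B_{\geq n-1}}\setminus\mc E\), i.e.\ boundary contributed by balls of comparable or larger size. A global selection destroys the matching: if a large candidate ball \(C\) (generated by \(B\)) is selected, then the candidate balls generated by much smaller balls \(B'\in\B\) lying in \(C\setminus B\) intersect \(C\) and are discarded, so \(5C\) alone must cover their spheres. Arranging \(\sim(\diam C/\rho)^d\) disjoint balls \(B'\) of radius \(\rho\) in \(C\setminus B\), each containing a small blob of \(E\) of density just above \(\lambda\) so that \(\partial B'\) avoids \(\mc E\), the part of \(\mb{\bigcup\B}\setminus\mc E\) inside \(5C\) has measure of order \(\diam(C)^d/\rho\), exceeding \(\sm{\partial 5C}\) by an arbitrarily large factor, and no other selected ball accounts for it. In the per-scale argument those tiny spheres are covered by candidate balls at their own scale, each carrying \(\gtrsim\lambda^{\f{d-1}d}\) of its surface area in \(\mb E\); that pairing is what your selection throws away, and it is why the logarithm is not just an artifact of careless bookkeeping. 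A further, smaller, problem: dropping the distance constraint in \cref{eq_ballcontainsmuchboundary} gives up the containment of the charged piece of \(\mb E\) in \(\bigcup\B\) (the balls \(C\) are centered on \(\partial B\) and stick out), so even granting the rest you would only reach something like \(\sm{\mb E\cap\bigcup2\B}\) rather than \(\sm{\mb E\cap\{\M\ind E>\lambda\}}\), and the localization to \(\Omega\) is lost.

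For comparison, the paper removes the logarithm by a different mechanism. It splits \(\B\) into the balls with \(\lm{E\cap B}>\f12\lm B\), for which \Cref{pro_levelsets_finite_g} already gives a constant, and the balls with \(\lambda\lm B<\lm{E\cap B}\leq\f12\lm B\), treated in \Cref{pro_levelsets_finite_l_local} (and \Cref{pro_levelsets_finite_l} for \(\Omega=\mathbb{R}^d\)): there \(E\cap\bigcup\B\) is decomposed by the boxing lemma \Cref{lem_boxing} into balls \(F\) with \(\lm{F\cap E}=\f12\lm F\), each \(\sm{\partial B}\) is distributed over the \(F\subset9B\) with weights decaying geometrically in the scale gap (the factors \(2^{k-n}\)), and the total charge received by a fixed \(F\) from all \(B\) is bounded using \Cref{cla_largeboundaryinball}; the cross-scale overlap is beaten by geometric decay, not by disjointness. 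The remark after \Cref{pro_levelsets_finite_g} that you leaned on is deliberately conditional: one cannot make the \(\C_n\) disjoint across scales while keeping the scale-matched covering that \Cref{cla_largeboundaryinball} needs, so to obtain the optimal rate you should follow (or reproduce) the redistribution argument of \Cref{pro_levelsets_finite_l_local} rather than a single Vitali selection.
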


More important than the statement of \Cref{eq_levelsets_optimal} is maybe the proof strategy.
It may be helpful when attempting to generalize \Cref{theo_goal} to
\(\var\M f\lesssim\var f\) for general functions \(f\) with bounded variation.

\begin{rem}
From taking \(\Omega=\mathbb{R}^d\) and \(E=B(0,1)\) it follows that the rate \(\lambda^{-\f{d-1}d}\) in \Cref{eq_levelsets_optimal} is optimal.
\end{rem}

In order to prove \Cref{eq_levelsets_optimal} it suffices to prove the following improvement of \Cref{pro_levelsets_finite_g}.
\begin{pro}\label{pro_levelsets_finite_l_local}
Let \(\lambda\in[0,1/2)\), let \(E\subset\mathbb{R}^d\) be a set of locally finite perimeter and let \(\B\) be a finite set of balls such that for each \(B\in\B\) we have \(\lambda\lm B<\lm{E\cap B}\leq\f12\lm B\).
Then
\[\sm{\mb{\bigcup\B} }\lesssim\lambda^{-\f{d-1}d}\sm{\mb E \cap\bigcup\B }.\]
\end{pro}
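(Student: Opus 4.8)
The plan is to follow the scheme of \Cref{pro_levelsets_finite_g} but to avoid the dyadic decomposition of \(\mb{\bigcup\B}\) by scales that produced the lossy factor \(1-\log\lambda\). The key extra hypothesis here is \(\lm{E\cap B}\le\f12\lm B\) for every \(B\in\B\), which is exactly the situation where the boxing inequality \Cref{lem_boxing} applies: there is a family \(\F\) of balls \(F\) with \(\lm{E\cap F}=\f12\lm F\), each contained in some \(B\in\B\), each \(B\in\B\) containing some \(F\in\F\), and \(\bigcup\F\) covering almost all of \(E\). Since \(\lm{E\cap B}>\lambda\lm B\) and \(F\subset B\) with \(\lm{E\cap F}=\f12\lm F\), comparing measures forces \(\diam F\gtrsim\lambda^{1/d}\diam B\); this is the quantitative lower bound on the sizes of the boxing balls that will drive the argument. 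So I would first invoke \Cref{lem_boxing} to pass from \(\B\) to \(\F\).

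Next, I would run the \Cref{cla_surfacedistanceboxingball} machinery relative to \(\bigcup\B\): for each point \(x\in\mb{\bigcup\B}\setminus\mc E\), using that \(x\) lies on \(\partial B\) for some \(B\in\B\) with \(\lm{E\cap B}>\lambda\lm B\), I get a ball \(C\) centered at \(x\), with \(\diam C\lesssim\diam B\), such that \(\sm{\mb E\cap C\cap\{y:\dist(y,\bigcup\B^\comp)>c\lambda\diam C\}}\gtrsim\lambda^{\f{d-1}d}\sm{\partial C}\); the crucial point, as in \Cref{cla_surfacedistanceboxing}\cref{it_enoughvardist}, is that the relevant boundary of \(E\) lies at distance \(\gtrsim\lambda\diam C\) inside \(\bigcup\B\). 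Extract a disjoint subfamily \(\C\) with \(5\C\) covering \(\mb{\bigcup\B}\setminus\mc E\) via \Cref{lem_disjointcover}. Then \(\sm{\mb{\bigcup\B}\setminus\mc E}\le\sum_{C\in\C}\sm{\partial 5C}\lesssim\lambda^{-\f{d-1}d}\sum_{C\in\C}\sm{\mb E\cap C\cap\{\dist(\cdot,\bigcup\B^\comp)>c\lambda\diam C\}}\), and the task is to bound the overlap of the sets \(\mb E\cap C\cap\{\dist(\cdot,\bigcup\B^\comp)>c\lambda\diam C\}\) by an absolute constant rather than by \(1-\log\lambda\). In \Cref{pro_levelsets_finite_g} the balls \(C\) ranged over all dyadic scales independently, so a given point of \(\mb E\) could be charged once per scale; here the extra input is that each \(C\) is, up to the factor \(\lambda^{1/d}\), comparable in size to the boxing ball \(F\) it is associated with, and the boxing balls have bounded overlap on \(E\) because \(\bigcup\F\approx E\) and they inherit a Besicovitch-type control. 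I would use this to show that only \(O(1)\) of the \(C\in\C\) can simultaneously contain a fixed point of \(\mb E\) at the prescribed depth, by a geometric argument: if \(y\in C\cap\{\dist(y,\bigcup\B^\comp)>c\lambda\diam C\}\), then \(\diam C\lesssim\lambda^{-1}\dist(y,\bigcup\B^\comp)\), which pins \(\diam C\) to within a bounded ratio once one also uses the lower bound \(\diam C\gtrsim\lambda^{1/d}\diam B\gtrsim\lambda^{1/d}\dist(y,\bigcup\B^\comp)\) coming from the boxing comparison, and at a fixed scale disjointness of \(\C\) gives bounded overlap of the \(5C\).

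Finally, I would also need to replace \(\sm{\mb{\bigcup\B}\setminus\mc E}\) on the left by the full \(\sm{\mb{\bigcup\B}}\): the part of \(\mb{\bigcup\B}\) lying in \(\mc E\) is handled by noting that \(\mb{\bigcup\B}\cap\mc E\subset\mb{\bigcup\B}\cap(\mi E\cup\mb E)\), that \(\mb E\cap\bigcup\B\) is already on the right-hand side, and that the contribution from \(\mi E\) vanishes since \(\mi E\subset\mi{\bigcup\B}\) by \Cref{lem_finMf} (every point of \(\mi E\) with \(\M\ind E>\lambda\) is in the interior of the union), so no point of \(\mi E\) lies on \(\mb{\bigcup\B}\). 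The main obstacle is the overlap estimate in the previous paragraph: making rigorous the claim that the depth constraint \(\dist(\cdot,\bigcup\B^\comp)>c\lambda\diam C\) together with the boxing lower bound \(\diam C\gtrsim\lambda^{1/d}\diam B\) forces the charging scales to lie in a bounded window, so that \(\sum_{C\in\C}\ind{C\cap\{\dist>c\lambda\diam C\}}\lesssim1\) pointwise on \(\mb E\) — this is exactly where the hypothesis \(\lm{E\cap B}\le\f12\lm B\) is used and where the logarithm is saved, and getting the constants to cooperate across the different applications of \Cref{cla_surfacedistanceboxingball}, \Cref{lem_boxing} and \Cref{lem_disjointcover} will be the delicate part.
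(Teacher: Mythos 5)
Your outline rests on a size comparison that is not available, and even if it were, it would not remove the logarithm. You claim that boxing forces \(\diam F\gtrsim\lambda^{\f1d}\diam B\), and that each covering ball \(C\) from \Cref{cla_surfacedistanceboxingball} is ``up to the factor \(\lambda^{1/d}\)'' comparable to an associated boxing ball. Neither holds. \Cref{lem_boxing} only guarantees that the \emph{union} of the balls \(F\) covers almost all of \(E\); an individual \(F\subset B\) with \(\lm{E\cap F}=\f12\lm F\) can be arbitrarily small compared to \(\lambda^{\f1d}\diam B\) (take \(E\cap B\) to be many tiny, well-separated pieces of total measure \(\lambda\lm B\): every ball of density exactly \(\f12\) is then comparable to the tiny pieces). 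Likewise, the balls \(C\) of \Cref{cla_surfacedistanceboxingball} carry only the upper bound \(\diam C\leq2\diam X\); their radius comes from a continuity argument between an arbitrarily small \(r_0\) and \(\diam X\), and no association between \(C\) and a particular \(F\) controlling \(\diam C\) from below is ever established. Most importantly, even granting both comparisons, your ``pinned window'' \(\lambda^{\f1d}\dist(y,\bigcup\B^\comp)\lesssim\diam C\lesssim\lambda^{-1}\dist(y,\bigcup\B^\comp)\) has ratio \(\lambda^{-1-\f1d}\), i.e.\ it contains about \((1+\f1d)\log_2\f1\lambda\) dyadic scales; since the Vitali family is disjoint only scale by scale, the pointwise overlap you obtain is \(O(\log\f1\lambda)\), not \(O(1)\) --- exactly the count that produces the factor \(1-\log\lambda\) in \Cref{pro_levelsets_finite_g}. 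So the proposal reproves \Cref{pro_levelsets_finite_g} but does not give \Cref{pro_levelsets_finite_l_local}.

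There is also a gap in the reduction from \(\sm{\mb{\bigcup\B}}\) to \(\sm{\mb{\bigcup\B}\setminus\mc E}\): you argue that the contribution of \(\mi E\) vanishes because \(\mi E\subset\mi{(\bigcup\B)}\) ``by \Cref{lem_finMf}'', but that lemma places \(\mi E\cap\Omega\) inside the full level set \(\{\M\ind E>\lambda\}\), not inside the union of the given \emph{finite} family \(\B\); a portion of \(\partial B\) of positive \(\H^{d-1}\)-measure can lie in \(\mi E\) even when \(\lm{E\cap B}\leq\f12\lm B\), and points of \(\mb{\bigcup\B}\cap\mb E\) are not in the open set \(\bigcup\B\), so they are not absorbed by the right-hand side either; this part of the boundary needs its own estimate. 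For contrast, the paper's proof proceeds quite differently: it selects boxing balls scale by scale so that \((3/4)^{\f1d}\F\) is disjoint (via \Cref{lem_volumemakesalmostdisjoint}), splits \(\B\) into \(\B^0,\B^{1,0},\B^{1,1}\) according to how much of each ball is covered by larger selected balls, bounds \(\mb{\bigcup(\B^0\cup\B^{1,0})}\) by redistributing \(\sm{\partial B}\) over the balls \(F\subset9B\) with weights \(2^{-|k-n|}\) decaying geometrically in the scale difference (this geometric decay, combined with \Cref{cla_largeboundaryinball}, is what replaces the \(\log\f1\lambda\) count by an absolute constant), and handles \(\B^{1,1}\) through \Cref{lem_ballsreachinside,lem_smallerballsthatintersectmuch}. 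Some mechanism of this kind --- genuine geometric decay across scales or genuinely bounded overlap of the charges --- is what your outline is missing; the hypothesis \(\lm{E\cap B}\leq\f12\lm B\), fed into the scheme of \Cref{pro_levelsets_finite_g}, does not by itself provide it.
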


\begin{proof}[Proof of \Cref{eq_levelsets_optimal}]
Let \(\B\) be a finite set of balls \(B\) with \(\lm{B\cap E}\geq\lambda\lm B\).
Then
\begin{align*}
\sm{\partial\bigcup\B\setminus\mc E}
&\leq\sm{\partial\{B\in\B:\lm{B\cap E}>\lm B/2\}\setminus\mc E}\\
&+\sm{\partial\{B\in\B:\lambda\lm B<\lm{B\cap E}\leq\lm B/2\}\setminus\mc E}
\end{align*}
By \Cref{pro_levelsets_finite_g} the first summand in the previous display bounded by a dimensional costant times \(\sm{\mb E\cap\bigcup\B}\)
and by \Cref{pro_levelsets_finite_l_local} the second summand is bounded by a dimensional constant times \(\lambda^{-\f{d-1}d}\sm{\mb E\cap\bigcup\B}\).
We conclude
\[\sm{\partial\bigcup\B\setminus\mc E}\lesssim\lambda^{-\f{d-1}d}\sm{\mb E\cap\bigcup\B},\]
which is \Cref{pro_levelsets_finite_g} without the factor \(1-\log\lambda\).
Now we can repeat the proof of \Cref{eq_levelsets} verbatim without the factor \(1-\log\lambda\).
\end{proof}

There is a weaker version of \Cref{pro_levelsets_finite_l_local} which has a simpler proof,
but already suffices to prove \Cref{eq_levelsets_optimal} for \(\Omega=\mathbb{R}^d\).

\begin{pro}\label{pro_levelsets_finite_l}
There is an \(\varepsilon>0\) depending only on the dimension such that for all \(\lambda\in[0,\varepsilon)\) the following holds. 
Let \(E\subset\mathbb{R}^d\) be a set of locally finite perimeter and let \(\B\) be a finite set of balls such that for each \(B\in\B\) we have \(\lambda\lm B<\lm{E\cap B}\leq\varepsilon\lm B\).
Then there is a finite superset \(\tilde\B\) of \(\B\) consisting of balls \(B\) with \(\lm{E\cap B}>\lambda\lm B\) that satisfies
\[\sm{\mb{\bigcup\tilde\B} }\lesssim\lambda^{-\f{d-1}d}\sm{\mb E \cap\bigcup\B }.\]
\end{pro}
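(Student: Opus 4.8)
The plan is to enlarge $\B$ by a family of balls, extracted from $E$ by the boxing inequality, on which $E$ has density bounded away from both $0$ and $1$; on such balls the relative isoperimetric inequality already controls the whole perimeter (\Cref{lem_isoperimetricconsequence}, \Cref{rem_varavball}), so the scale decomposition of \Cref{pro_levelsets_finite_g} can be carried out without ever having to charge a covering ball $C$ to $\mb E$ at the faraway distance $\lambda\diam C$ from the complement, which is what forced the factor $1-\log\lambda$ there. The point of $\varepsilon$ being a small dimensional constant is to let these balls survive a fixed Vitali dilation with density still above $\lambda$. As in the proof of \Cref{eq_levelsets} we may assume $E\subseteq\bigcup\B$; since $\bigcup\B$ is a finite union of balls, $E$ then has finite measure.

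Each $B\in\B$ satisfies $\lm{E\cap B}\le\varepsilon\lm B\le\f12\lm B$, so \Cref{lem_boxing} applied to $E$ produces a family $\F$ of balls $F$, each contained in some $B\in\B$, with $\lm{E\cap F}=\f12\lm F$ and $\bigcup\F\supseteq E$ up to a null set. By \Cref{lem_disjointcover} there is a disjoint subfamily $\F'\subseteq\F$ with $\bigcup\F\subseteq\bigcup5\F'$, which a routine truncation lets us take finite. For $F\in\F'$,
\[\tfrac12\cdot5^{-d}\lm{5F}=\lm{E\cap F}\le\lm{E\cap5F}\le\lm{E\cap F}+\lm{5F\setminus F}=(1-\tfrac12\cdot5^{-d})\lm{5F},\]
so the density of $E$ in $5F$ lies in $[\tfrac12\cdot5^{-d},\,1-\tfrac12\cdot5^{-d}]$, and if we fix the dimensional constant $\varepsilon\le\tfrac12\cdot5^{-d}$ then $\lm{E\cap5F}\ge\varepsilon\lm{5F}>\lambda\lm{5F}$. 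Thus $\tilde\B:=\B\cup5\F'$ is a finite superset of $\B$ consisting of balls of density $>\lambda$, with $\bigcup\tilde\B\supseteq\bigcup\B$.

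To bound $\sm{\mb{\bigcup\tilde\B}}$ I would rerun the proof of \Cref{pro_levelsets_finite_g} for $\tilde\B$. The part of $\mb{\bigcup\tilde\B}$ near $\mc E$ is treated as the corresponding term in \Cref{eq_levelsets} via \Cref{lem_finMf}, now using in addition that $\F'$ is disjoint and that each $5F$, having density in $[\tfrac12\cdot5^{-d},\,1-\tfrac12\cdot5^{-d}]$, satisfies $\sm{\partial(5F)}\lesssim\sm{\mb E\cap5F}$ by \Cref{lem_isoperimetricconsequence} — with no power of $\lambda$. For the part away from $\mc E$ I would again decompose by scales as in \Cref{cla_surfacedistanceboxing,cla_largeboundaryinball,pro_levelsets_finite_g}; the improvement is that every covering ball $C$ arising there now sits next to a ball of $\tilde\B$ of comparable diameter — a ball of $\B$, or a ball of $5\F'$ whose $E$-density is bounded away from $0$ and $1$ — and, because $\bigcup5\F'$ already contains a neighbourhood of $E$ of size comparable to the local scale of $E$, the piece of $\mb E$ that $C$ gets charged to lies at distance comparable to $\diam C$, not merely in $[\lambda\diam C,\diam C]$, from $(\bigcup\tilde\B)^\comp$. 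Hence the summation over scales over-counts each point of $\mb E$ only a bounded number of times, costing an $O(1)$ factor instead of $1-\log\lambda$, and with $\bigcup\tilde\B\subseteq\{\M\ind E>\lambda\}$ one concludes $\sm{\mb{\bigcup\tilde\B}}\lesssim\lambda^{-\f{d-1}d}\sm{\mb E\cap\bigcup\B}$.

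The main obstacle is precisely this last improvement: showing rigorously that after the enlargement each relevant piece of $\mb E$ sits at a single scale away from $\partial(\bigcup\tilde\B)$, so that the scale sum collapses. This amounts to matching each covering ball to a nearby ball of $5\F'$ of the correct size and exploiting that $E$ occupies a definite fraction of the latter; it is here that the boxing inequality and the smallness of $\varepsilon$ genuinely interact, and where the bulk of the work lies. (For $d=1$ the claim is trivial by a direct count of endpoints, so one may assume $d\ge2$.)
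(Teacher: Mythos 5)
Your set-up is fine as far as it goes: applying \Cref{lem_boxing} and \Cref{lem_disjointcover} to get a disjoint family \(\F'\) of balls with \(\lm{E\cap F}=\f12\lm F\), computing that the density of \(E\) in \(5F\) lies in \([\f12 5^{-d},1-\f12 5^{-d}]\), and observing that \(\B\cup5\F'\) is a legitimate superset with density \(>\lambda\). But the proof has a genuine gap exactly where you place ``the bulk of the work'': the mechanism that removes the factor \(1-\log\lambda\) is never supplied, and the mechanism you sketch does not work. Rerunning \Cref{pro_levelsets_finite_g} for \(\tilde\B=\B\cup5\F'\) in the hope that each covering ball \(C\) now charges a piece of \(\mb E\) at distance \(\sim\diam C\) from \((\bigcup\tilde\B)^\comp\) fails, because the boxing balls adapt to the local thickness of \(E\): if \(E\) meets a ball \(B\in\B\) only in a slab of thickness \(\sim\lambda\diam B\) along \(\partial B\) (precisely the configuration that forces the \(\lambda\diam C\) layer in \Cref{cla_surfacedistanceboxingball}), then the balls \(F\in\F'\) covering that slab have \(\diam F\sim\lambda\diam B\), so \(\bigcup5\F'\) fattens \(E\) there only by \(\sim\lambda\diam B\). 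The complement of \(\bigcup\tilde\B\) therefore stays at distance \(\sim\lambda\diam C\) from the charged piece of \(\mb E\), the distance layers still overlap over \(\sim\log(1/\lambda)\) scales, and no collapse of the scale sum occurs. The paper's enlargement does not produce the distance improvement you are after either — it abandons distance layers altogether.

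What the paper actually does is different in two essential ways. First, the added balls are the \(\lambda\)-dependent dilations \((2\lambda)^{-\f1d}F\), which have \(E\)-density \(\geq\lambda\) by construction; this dilation is chosen so large that any \(B\in\B\) meeting \(5F\) with \(\diam F\gtrsim\lambda^{\f1d}\diam B\) is entirely contained in \((2\lambda)^{-\f1d}F\) and may be discarded, so every surviving \(B\) meets only boxing balls with \(\lm{5F}\leq\f12\lm B\). Second, instead of charging \(\partial B\) to distance layers of \(\mb E\), one applies \Cref{lem_isoperimetricconsequence} with \(X=B\) and the set \(5F\), which gives \(\sm{\partial B}\lesssim\sum_{k\leq n}\sum_{F\in\F_k,\,F\subset2B}2^{k-n}\lambda^{-\f{d-1}d}\sm{\partial F}\); the geometric factor \(2^{k-n}\), combined with \Cref{cla_largeboundaryinball} to bound uniformly the total contribution of all \(B\in\B_n\) with \(F\subset2B\), makes the sum over scales converge with no logarithm, and the proof closes with \(\sm{\partial F}\lesssim\sm{\mb E\cap F}\) and the disjointness of \(\F\). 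Note that your fixed dilation \(5\F'\) also breaks the absorption step this scheme needs: a surviving \(B\) could meet a boxing ball \(F\) with \(\diam F\gtrsim\diam B\) without being contained in \(5F\), and then \(\lm{5F\cap B}\) can exceed \((1-\varepsilon)\lm B\), so the relative isoperimetric step with \(X=B\) is unavailable. In short, the \(\lambda\)-dependent choice of \(\tilde\B\) — exactly the part your construction replaces — is what the rest of the argument hinges on, and the core estimate is missing from your proposal.
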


\begin{proof}[Proof of \Cref{eq_levelsets_optimal} for \(\Omega=\mathbb{R}^d\)]
Take \(\varepsilon>0\) from \Cref{pro_levelsets_finite_l}.
For \(\lambda\geq\varepsilon\) \Cref{eq_levelsets_optimal} already follows from \Cref{eq_levelsets}.
It suffices to consider the case that there is an \(x_0\in\mathbb{R}^d\) with \(\lambda<\M\ind E(x_0)\leq\varepsilon\).
Let \(x\in\mathbb{R}^d\) with \(\M\ind E(x)>\lambda\).
Then there is a ball \(C\ni x\) with \(\lm{E\cap C}>\lambda\lm C\),
while \(\lm{E\cap B(x_0,|x-x_0|+1)}\leq\varepsilon\lm{B(x_0,|x-x_0|)+1}\).
By continuously transforming \(C\) into \(B(x_0,|x-x_2|+1)\) we can conclude that
\(\{\M\ind E>\lambda\}\) is a union of balls \(B\) with \(\lambda\lm B<\lm{E\cap B}\leq\varepsilon\lm B\).
Thus by the Lindel\"of property there is a sequence of balls \((B_n)_n\)
with \(\lambda\lm{B_n}<\lm{E\cap B_n}\leq\varepsilon\lm{B_n}\)
such that \(\{\M\ind E>\lambda\}=B_1\cup B_2\cup\ldots\).
Let \(\tilde\B_n\) be the finite superset of \(\B_n=\{B_1,\ldots,B_n\}\) from \Cref{pro_levelsets_finite_l}.
Then 
\[\bigcup\B_n\subset\bigcup\tilde\B_n\subset\{\M\ind E>\lambda\}\]
which means that \(\bigcup\tilde\B_n\) converges to \(\{\M\ind E>\lambda\}\) in \(L^1_\loc(\Omega)\).
Thus we get as in the proof of \Cref{eq_levelsets} that
\[
\sm{\mb{\{\M\ind E>\lambda\}}}\leq\limsup_{n\rightarrow\infty}\sm{\mb{\bigcup\tilde\B_n}}.
\]
By \Cref{pro_levelsets_finite_l} we have
\begin{align*}
\sm{\mb{\bigcup\tilde\B_n}}
&\lesssim\lambda^{-\f{d-1}d}\sm{\mb E\cap\bigcup\B_n}\\
&\leq\lambda^{-\f{d-1}d}\sm{\mb E\cap\{\M\ind E>\lambda\}}.
\end{align*}
\end{proof}

\subsection{The global case \texorpdfstring{\(\Omega=\mathbb{R}^d\)}{Omega=Rd}}

In this subsection we present a proof of \Cref{pro_levelsets_finite_l}.
It already contains some of the ideas for the general local case \Cref{pro_levelsets_finite_l_local}.

\begin{proof}[Proof of \Cref{pro_levelsets_finite_l}]
Restrict \(\varepsilon\leq\f12\). 
Let \(\F'\) be the collection of balls from \Cref{lem_boxing} applied to \(E\cap\bigcup\B\) and \(\B\). 
Let \(\tilde\F\) be the countable disjoint subcollection from \Cref{lem_disjointcover}. 
Extract from that a finite subcollection \(\F\) so that for every \(B\in\B\) we have
\begin{equation}\label{eq_enoughfinite}
\lm{E\cap\bigcup5\F\cap B}\geq\f\lambda2\lm{B}.
\end{equation}
This is possible since \(\B\) is finite. 
Here \(\F\) serves as a decomposition of \(E\) into pieces \(F\cap E\) where each piece has a substantial amount of boundary. 
The overall goal is to collect for each \(F\) its contribution to \(\sm{\mb{\bigcup\B}}\) and show that it is bounded by \(\partial F\).
First we enlarge \(\B\).
For every \(F\in\F\) the ball \(B=(2\lambda)^{-\f1d} F\) satisfies 
\[\lm{E\cap B}\geq\lm{E\cap F}=\f{\lm F}2=\lambda\lm B.\]
Add all those balls \(B\) to \(\B\). 
Then \(\B\) is still finite. 

Restrict \(\varepsilon\leq\f1210^{-d}\) and let \(r>0\) and \(F\in\F\) with \(\diam F\geq8r(2\lambda)^{\f1d}\).
Since we assume \(\lambda\leq\varepsilon\), we obtain
\[
\diam((2\lambda)^{-\f1d}F)
-
\diam(5F)
=
((2\lambda)^{-\f1d}-5)\diam F
\geq
(1-5(2\lambda)^{\f1d})\cdot8r
\geq
4r
,
\]
which means that any ball \(B\in\B\) with diameter at most \(r\) that intersects \(5F\) is entirely contained in \((2\lambda)^{-\f1d} F\in\B\).
Hence we may remove \(B\) from \(\B\) without changing \(\mb{\bigcup\B}\setminus\mc E\).
Conversely, we may assume that if \(B\in\B\) has diameter \(r\) and \(F\in\F\) is a ball for which \(5F\) intersects \(B\), then \(\diam F<8r(2\lambda)^{\f1d}\).
We further restrict \(\varepsilon\leq\f1420^{-d}\) and obtain
\begin{equation}\label{eq_bfintersect}
\f{\lm{5F}}{\lm B}<\f{5^d8^dr^d2\varepsilon}{2^dr^d}\leq\f12.
\end{equation}

For each \(n\in\mathbb{Z}\) denote by \(\B_n\) the set of balls in \(\B\) with \(\diam B\in[\f12,1)2^n\).
Denote by \(\F_n\) the set of those balls with \(\diam F\in2^n(2\lambda)^{\f1d}[4,8)\). 
Let \(B\in\B_n\) and let \(F\in\F\) such that \(5F\) intersects \(B\).
Then
\begin{equation}\label{eq_FinFntoo}
F\in\F_k\qquad\tx{for some }k\leq n.
\end{equation}
By \cref{eq_bfintersect},
any \(F\in\F\) such that \(5F\) intersects \(B\) is contained in \(3B\).
Thus we get from \cref{eq_enoughfinite,eq_FinFntoo} that
\[\f\lambda2\lm{B}\leq\sum_{F\in\F_{\leq n},\ F\subset3B}\lm{5F\cap B}.\]
We rewrite the previous display as
\begin{align}
\nonumber\sm{\partial B}
&\leq2
\sum_{k\leq n}\sum_{F\in\F_k,F\subset3B}
\f{\lm{5F\cap B}}{\lambda\lm{B}}\sm{\partial B}
\\
\nonumber
&\lesssim
\sum_{k\leq n}\sum_{F\in\F_k,F\subset3B}
\Bigl(\f{\lm F}{\lambda\lm{B}}\Bigr)^{\f1d}
\lambda^{-\f{d-1}d}
\Bigl(\f{\lm F}{\lm{B}}\Bigr)^{\f{d-1}d}
\sm{\partial B}
\\
\label{eq_dbtodf}
&\sim
\sum_{k\leq n}\sum_{F\in\F_k,F\subset3B}2^{k-n}\lambda^{-\f{d-1}d}\sm{\partial F}.
\end{align}

This estimate can be seen as a way to distribute \(\sm{\partial B}\) over the balls \(F\) that it contains. 
The next step will be to turn this dependence around, and see for a fixed \(F\) for how much variation of \(\sm{\mb{\bigcup\B}}\) it is responsible.

Since \(\B_n\) is finite we have
\[\sm{\mb{\bigcup\B_n}}=\sum_{B\in\B_n}\sm{\partial B\cap\mb{\bigcup\B_n}}.\]
We again multiply each summand by a number bounded from below according to \cref{eq_dbtodf},
\begin{align*}
\sum_{B\in\B_n}\sm{\partial B\cap\mb{\bigcup\B_n}}
&\lesssim\sum_{B\in\B_n}\f{\sm{\partial B\cap\mb{\bigcup\B_n}}}{\sm{\partial B}}\sum_{k\leq n}\sum_{F\in\F_k,F\subset3B}2^{k-n}\lambda^{-\f{d-1}d}\sm{\partial F}\\
&=\lambda^{-\f{d-1}d}\sum_{k\leq n}2^{k-n}\sum_{F\in\F_k}\sm{\partial F}\sum_{B\in\B_n,3B\supset F}\f{\sm{\partial B\cap\mb{\bigcup\B_n}}}{\sm{\partial B}}.
\end{align*}
Now we have reorganized \(\mb{\bigcup\B_n}\) according to the balls in \(\F\). 
We want to bound the contribution of each ball \(F\in\F\) uniformly.
For each \(F\in\F_k\) for which there is a ball \(B\in\B_n\) with \(F\subset3B\), denote by \(B_F\) a largest such ball \(B\).
Then for each \(B\in\B_n\) with \(F\subset3B\) have \(B\subset9B_F\). 
Thus
\[\sum_{B\in\B_n,3B\supset F}\f{\sm{\partial B\cap\mb{\bigcup\B_n}}}{\sm{\partial B}}
\lesssim\sum_{B\in\B_n,B\subset9B_F}\f{\sm{\partial B\cap\mb{\bigcup\B_n}}}{\sm{\partial B_F}},\]
which is uniformly bounded according to \Cref{cla_largeboundaryinball}.
Therefore we can conclude
\begin{align*}
\sm{\mb{\bigcup\B_n}}
&\lesssim\lambda^{-\f{d-1}d}\sum_{k\leq n}2^{k-n}\sum_{F\in\F_k}\sm{\partial F}.\\
\intertext{
So the interaction between the scales is small enough
so that we can just sum over all scales and obtain
}
\sm{\mb{\bigcup\B}}&\leq\sum_n\sm{\mb{\bigcup\B_n}}\\
&\lesssim\lambda^{-\f{d-1}d}\sum_k\sum_{n\geq k}2^{k-n}\sum_{F\in\F_k}\sm{\partial F}\\
&\lesssim\lambda^{-\f{d-1}d}\sum_k\sum_{F\in\F_k}\sm{\partial F}\\
&=\lambda^{-\f{d-1}d}\sum_{F\in\F}\sm{\partial F}.
\intertext{
Now we get back from \(\F\) to \(E\). 
Recall that for each \(F\in\F\) we have \(\lm{F\cap E}=\lm F/2\),
so that by \Cref{lem_isoperimetricconsequence} we have \(\sm{\partial F}\lesssim\sm{\mb E\cap F}\).
Because the balls in \(\F\) are disjoint, we can then conclude
}
\sm{\mb{\bigcup\B}}
&\lesssim\lambda^{-\f{d-1}d}\sum_{F\in\F}\sm{\mb E\cap F}\\
&\leq\lambda^{-\f{d-1}d}\sm{\mb E\cap\bigcup\B}.
\end{align*}

\end{proof}

\subsection{The general local case \texorpdfstring{\(\Omega\subset\mathbb{R}^d\)}{Omega is a subset of Rd}}

In this subsection we present a proof of \Cref{pro_levelsets_finite_l_local}.
It requires a few more steps than the proof of \Cref{pro_levelsets_finite_l}.

\begin{lem}\label{lem_volumemakesalmostdisjoint}
Let \(0\leq\lambda\leq2^{-\f{d+1}2}(d+1)^{-\f12}\) and let \(B,C\) be balls with \(\diam C\geq\diam B\) and
\(\lm{B\cap C}\leq\lambda\lm B\).
Then \((1-2(d+1)^{\f1{d+1}}\lambda^{\f2{d+1}})B\) and \(C\) are disjoint.
\end{lem}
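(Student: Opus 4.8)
The plan is to argue by contradiction. Suppose \((1-t)B\) and \(C\) are \emph{not} disjoint, where \(t\seq2d^{\f{3}{d+1}}\lambda^{\f{2}{d+1}}\); the hypothesis on \(\lambda\) guarantees \(t\leq1\). Fix a point \(p\in(1-t)B\cap C\), and after a translation and a rotation assume \(B=B(0,r)\) and \(C=B(\beta e_1,R)\) with \(R\geq r\) and \(\beta\geq0\). In the concentric case \(\beta=0\) we have \(B\subset C\), so \(\lm{B\cap C}=\lm B>\lambda\lm B\), a contradiction; hence from now on \(\beta>0\), and \(B\cap C\) is a lens-shaped region.

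The first step is to bound \(\lm{B\cap C}\) from below by the volume of a spherical cap of \(B\). Slicing \(B\cap C\) by the hyperplanes \(\{x_1=\text{const}\}\) and comparing the constraints \(|x'|^2\leq r^2-x_1^2\) and \(|x'|^2\leq R^2-(x_1-\beta)^2\), one sees that at the level \(x_1^*\seq\f{\beta^2+r^2-R^2}{2\beta}\) where the two right-hand sides agree, the first constraint becomes the binding one for all \(x_1\geq x_1^*\); consequently \(\{x\in B:x_1\geq x_1^*\}\subset C\). This is exactly where \(R\geq r\) is used. Therefore \(\lm{B\cap C}\geq\omega_{d-1}\int_{x_1^*}^r(r^2-x_1^2)^{\f{d-1}{2}}\intd x_1\), the volume of a cap of \(B\) of height \(h\seq r-x_1^*\) (here \(\omega_k\) is the volume of the unit ball in \(\mathbb R^k\)). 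Bounding \(r^2-x_1^2\geq r(r-x_1)\) on this cap gives the clean estimate \(\lm{B\cap C}\gtrsim_d r^{\f{d-1}{2}}\bigl(\min(h,r)\bigr)^{\f{d+1}{2}}\), and it is precisely this \((h/r)^{\f{d+1}{2}}\)-scaling of a thin cap that produces the exponent \(\f{2}{d+1}\).

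The second step is to show \(h\geq tr/2\), and here the point \(p\) is used. A direct computation gives \(h=\f{(R-r+\beta)(R+r-\beta)}{2\beta}\), in which \(R-r+\beta\geq\beta\) because \(R\geq r\). For the factor \(R+r-\beta\) one splits into two cases. If \(\beta\leq R\) then \(R+r-\beta\geq r\), so in fact \(h\geq r/2\), and then \(\lm{B\cap C}\gtrsim_d\lm B>\lambda\lm B\), a contradiction; hence \(\beta>R\), i.e.\ the centre of \(B\) lies outside \(\cl C\), and moreover then \(x_1^*>0\), so \(h<r\). In this remaining case \(\beta=|\beta e_1|\leq|\beta e_1-p|+|p|<R+(1-t)r\), whence \(R+r-\beta>tr\) and therefore \(h>tr/2\). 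Combining the two steps yields \(\lambda\omega_dr^d\geq\lm{B\cap C}\gtrsim_d t^{\f{d+1}{2}}r^d\); substituting \(t=2d^{\f{3}{d+1}}\lambda^{\f{2}{d+1}}\) cancels the powers of \(\lambda\) and leaves a fixed numerical inequality relating \(\omega_{d-1}\) and \(\omega_d\) — this is what the constants \(2\) and \(d^{\f{3}{d+1}}\) are calibrated to violate. It is violated with room to spare for \(d\geq2\), since \(\omega_{d-1}/\omega_d\sim\sqrt{d/(2\pi)}\) far exceeds the right-hand side, which is of order \(d^{-1/2}\); and for \(d=1\) the statement is checked directly with intervals (there it is sharp). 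The resulting contradiction establishes the lemma.

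The step I expect to be the main obstacle is the bookkeeping in the cap estimate: confirming the crossover inclusion \(\{x\in B:x_1\geq x_1^*\}\subset C\) and the formula for \(h\), and cleanly handling the borderline regimes — when \(h\) exceeds \(r\) (the ``cap'' is then more than a hemisphere and the bound only improves) and when \(\beta\) is close to \(R-r\) or to \(R+r\). Checking the final numerical inequality between \(\omega_{d-1}\) and \(\omega_d\) for the small values of \(d\) is routine but must be carried out.
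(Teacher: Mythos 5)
Your argument is correct and follows essentially the same route as the paper: reduce to an axially symmetric configuration, bound the lens volume $\lm{B\cap C}$ from below by $c_d\,r^{\f{d-1}2}(\text{penetration depth})^{\f{d+1}2}$, and compare with $\lambda\lm B$ via a bound of the form $\omega_d/\omega_{d-1}\lesssim d^{\f12}$ (the paper's Stirling estimate), which is exactly what fixes the constants $2$ and $d^{\f3{d+1}}$. The only differences are cosmetic — you argue by contradiction with a one-sided spherical cap cut at the radical hyperplane, while the paper solves directly for the depth $\varepsilon$ using a reflected paraboloid slab — and your outstanding small-$d$ check (the inequality $\omega_d<\f{2d^{3/2}}{d+1}\omega_{d-1}$ for $d\geq2$, with $d=1$ done by hand) does hold, so there is no gap.
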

For the application we only need that for \(\lambda\) small enough \(B\) and \((3/4)^{\f1d}C\) are disjoint.
Since \(\diam C\geq\diam B\) this follows if \((3/4)^{\f1d}B\) and \(C\) are disjoint.
The rate in \(\lambda\) plays no role.

\begin{proof}
After rescaling, rotation and translation it suffices to consider the case that there are
\(r\geq1\) and \(0<\varepsilon\leq2\) such that \(B=B(e_1,1)\) and \(C=B((\varepsilon-r)e_1,r)\).
\begin{figure}
\centering
\includegraphics{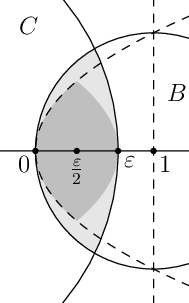}
\caption{The lower bound for \(\lm{B\cap C}\) in the proof of \Cref{lem_volumemakesalmostdisjoint}.}
\label{fig_intersectingballvolume}
\end{figure}
We bound \(\lm{B\cap C}\) from below by the marked area in \Cref{fig_intersectingballvolume}.
For \(x\in\mathbb{R}^d\) denote \(\bar x_1=(x_2,\ldots,x_d)\).
The two spheres \(\partial B\) and \(\partial C\) intersect in a plane orthogonal to \(e_1\)
that is between \(\f\varepsilon2e_1\) and \(\varepsilon e_1\).
Thus
\[
\Bigl\{x:\bar x_1^2< x_1<\f\varepsilon2\Bigr\}
\subset\Bigl\{x\in B:x_1<\f\varepsilon2\Bigr\}
\subset B\cap C
\]
and by symmetry and \(r\geq1\) also the image of the first set mirrored at \(x_1=\f\varepsilon2\) is contained in \(B\cap C\),
so that
\[
\lm{B\cap C}
>
2\lmb{\Bigl\{x:\bar x_1^2< x_1<\f\varepsilon2\Bigr\}}
=
2\int_0^{\f\varepsilon2}\sigma_{d-1}h^{\f{d-1}2}\intd h
=
2^{-\f{d-3}2}\f{\sigma_{d-1}}{d+1}\varepsilon^{\f{d+1}2}
.
\]
Therefore, since \(\lm{B\cap C}\leq\lambda\lm B=\lambda\sigma_d\)
we can conclude the following upper bound for \(\varepsilon\) using \Cref{eq_unitvolumeratio}.
\begin{align*}
\varepsilon^{\f{d+1}2}
&\leq
\f{\lambda(d+1)\sigma_d}{\sigma_{d-1}}2^{\f{d-3}2}
\leq
2^{\f{d-2}2}\f{d+1}{\sqrt d}\sqrt\pi\lambda
\leq
2^{\f{d+1}2}\f{d+1}{\sqrt{2d}}\lambda
\leq
2^{\f{d+1}2}(d+1)^{\f12}\lambda,\\
\varepsilon&\leq2(d+1)^{\f1{d+1}}\lambda^{\f2{d+1}}.
\end{align*}
This finishes the proof because \((1-\varepsilon)B\) and \(C\) are disjoint.
\end{proof}

\begin{lem}\label{lem_ballsreachinside}
Let \(0<\lambda\leq1\), let \(B\) be a ball and let \(\F\) be a set of balls with \(\lm{\bigcup\F\cap B}\geq\lambda\lm B\).
Then there is a ball \(F\in\F\) which intersects \((1-\lambda/d)B\).
\end{lem}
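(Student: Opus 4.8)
The plan is to prove the contrapositive by comparing volumes of concentric balls. Write $B=B(x,r)$, so that $(1-\lambda/d)B=B(x,(1-\lambda/d)r)$ is concentric with $B$. First I would suppose that no $F\in\F$ meets $(1-\lambda/d)B$; then every $F\in\F$, and hence $\bigcup\F$, is disjoint from $B(x,(1-\lambda/d)r)$, so that
\[\bigcup\F\cap B\subset B(x,r)\setminus B(x,(1-\lambda/d)r),\]
the spherical shell between the concentric spheres of radii $(1-\lambda/d)r$ and $r$.

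The next step is to bound the measure of this shell. Since $\lm{B(x,\rho)}=\rho^d\lm{B(0,1)}$ we have
\[\lmb{B(x,r)\setminus B(x,(1-\lambda/d)r)}=\bigl(1-(1-\lambda/d)^d\bigr)\lm B,\]
and Bernoulli's inequality $(1-t)^d\geq1-dt$ applied with $t=\lambda/d$ gives $(1-\lambda/d)^d\geq1-\lambda$, whence the shell has measure at most $\lambda\lm B$. Combined with the inclusion above this yields $\lm{\bigcup\F\cap B}\leq\lambda\lm B$, contradicting the hypothesis $\lm{\bigcup\F\cap B}\geq\lambda\lm B$. For $d\geq2$ Bernoulli's inequality is strict when $\lambda>0$ — the function $t\mapsto(1-t)^d$ is strictly convex on $(0,1)$ — so the shell in fact has measure strictly less than $\lambda\lm B$ and the contradiction is clean; in dimension $d=1$, where the estimate degenerates to an equality, the statement is part of the classical one-dimensional theory.

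I do not expect a real obstacle: the whole argument is this volume comparison together with Bernoulli's inequality. The only point that warrants a moment's care is the equality case of Bernoulli's inequality, which is the reason for separating out $d=1$ above; everything else is routine.
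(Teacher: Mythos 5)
Your argument is correct and is essentially the paper's proof: the paper likewise passes to the shell $B\setminus(1-\lambda/d)B$ and bounds its measure by $d\lm B\int_{1-\lambda/d}^1 r^{d-1}\intd r<\lambda\lm B$, which is exactly the Bernoulli-type volume estimate you use, so $\bigcup\F$ cannot avoid $(1-\lambda/d)B$. Your attention to the equality case (strictness for $d\geq2$, degeneracy at $d=1$) is in fact more careful than the paper, which simply asserts the strict inequality; in the paper's application the covering hypothesis is strict anyway, so the $d=1$ boundary case causes no harm.
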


\begin{proof}
Since
\[
\lm{B\setminus(1-\lambda/d)B}
=
d\lm B\int_{1-\lambda/d}^1r^{d-1}\intd r
<
\lambda\lm B
,
\]
the union \(\bigcup\F\) cannot lie outside of \((1-\lambda/d)B\).
\end{proof}

\begin{proof}[Proof of \Cref{pro_levelsets_finite_l_local}]
According to \Cref{lem_boxing}, for every \(B\in\B\), almost every point in \(E\cap B\) is contained in a ball \(F\subset B\) with
\[\lm{F\cap E}=\f12\lm F.\]
Denote by \(\G\) the set of all such balls \(F\).
By scaling it suffices to consider the case that all balls in \(\G\) and \(\B\) have diameter at most \(1\).
We inductively build sequences \((\F_n)_{n=0}^{-\infty}\) and \((\G_n)_{n=0}^{-\infty}\) of subsets of \(\G\).
We denote \(\F_{>n}=\bigcup_{n<k\leq0}\F_n\) and \(\G_{>n}\) and \(\F_{n<\cdot\leq k}\) accordingly.
Assume we are at scale \(n\leq 0\).
Denote by \(\B_n\) the set of balls in \(\B\) with \(\diam B\in[\f12,1)2^n\).
Decompose \(\B_n\) into
\begin{align*}
\B_n^0&=\Bigl\{B\in\B_n:\lm{\bigcup5\F_{>n}\cap B}\leq\f\lambda2\lm{B}\Bigr\},\\
\B_n^1&=\Bigl\{B\in\B_n:\lm{\bigcup5\F_{>n}\cap B}>\f\lambda2\lm{B}\Bigr\}
\end{align*}
and decompose \(\B_n^1\) into
\begin{align*}
\B_n^{1,0}
&=
\Bigl\{B\in\B_n^1:
\lm{\bigcup\F_{>n}\cap B}
\leq
\f1{8^{\f{d+1}2}(d+1)^{\f{d+2}2}}
\lm{B}
\Bigr\}
,
\\
\B_n^{1,1}
&=
\Bigl\{B\in\B_n^1:
\lm{\bigcup\F_{>n}\cap B}
>
\f1{8^{\f{d+1}2}(d+1)^{\f{d+2}2}}
\lm{B}
\Bigr\}
.
\end{align*}
Denote by \(\G_n\) the set of balls \(G\in\G\) with \(\diam G\in[\f12,1)2^n\) which intersect
\(E\setminus\bigcup5\F_{>n}\)
or are for some \(k\geq n\) and some \(B\in\B_k^{1,0}\) contained in
\(B\setminus\bigcup\F_{n<\cdot\leq k}\).
Set \(\F_n\) to be a maximal disjoint subcollection of \(\G_n\).

Denote \(\F=\bigcup_n\F_n\), \(\B_0=\bigcup_n\B_n^0\) and \(\B^{1,0}\) and \(\B^{1,1}\) accordingly.
Here are a few properties of those ball collections.

\begin{enumerate}
\item
\label{lem_FncoversGn}
The collection \(5\F_n\) is a cover of \(\bigcup\G_n\).
\item
\label{lem_FcoversE}
The collection \(5\F\) covers almost all of \(E\).
\item
\label{lem_Bdecomposition}
The balls in \((3/4)^{\f1d}\F\) are disjoint.
\item
\label{lem_smallFcoverB0}
If \(B\in\B_n^0\) then \(5\F_{\leq n}\) covers at least \(\f\lambda2\) of \(B\).
\item
\label{lem_smallFcoverB10}
If \(B\in\B_n^{1,0}\) then \(5\F_{\leq n}\) covers at least \(\lambda\) of \(B\).
\end{enumerate}

\begin{proof}
\begin{enumerate}
\item
By the maximality of \(\F_n\) every \(G\in\G_n\) intersects an \(F\in\F_n\).
Since \(\diam G\leq2\diam F\) this means \(G\subset5F\).
\item
Let \(G\in\G\) be a ball and \(n\in\mathbb{Z}\) with \(\diam G\in[\f12,1)2^n\). 
Then \(G\) intersects \(E\), so that by definition of \(\G_n\) we have \(G\cap E\subset\bigcup5\F_{>n}\) or \(G\in\G_n\).
By \Cref{lem_FncoversGn} we can conclude \(G\cap E\subset\bigcup5\F_{\geq n}\) in either case.
Since \(\G\) covers almost all of \(E\) this means so does \(5\F\).
\item
For each \(n\) the balls in \(\F_n\) are disjoint.
It remains to show that they are disjoint from the balls in \((3/4)^{\f1d}\F_{>n}\). 
So assume \(F\in\F_n\).
If \(F\) was chosen because it intersects \(E\setminus\bigcup5\F_{>n}\)
then it doesn't intersect \(\F_{>n}\).
It remains to consider the case that there is a \(k\geq n\) and a \(B\in\B_k^{1,0}\) such that \(F\subset B\)
and \(F\) does not intersect any \(G\in\F_{n<\cdot\leq k}\).
Since \(B\in\B_k^{1,0}\), for every \(G\in\F_{>k}\) we have
\[
\lm{B\cap G}
\leq
\f1{8^{\f{d+1}2}(d+1)^{\f{d+2}2}}
\lm B,
\]
so that by \Cref{lem_volumemakesalmostdisjoint}
the balls \(\bigl(1-\f1{4(d+1)}\bigr)B\) and \(G\) are disjoint.
Since \(\bigl(\f34\bigr)^{\f1d}\leq1-\f1{4(d+1)}\) and \(\diam G\geq\diam B\)
this means that \((3/4)^{\f1d}G\) and \(B\) are disjoint, too.
Hence also \(F\) and \(\bigcup(3/4)^{\f1d}\F_{>k}\) are disjoint.
\item
For every \(B\in\B_n^0\) we have \(\lm{B\cap E}\geq\lambda\lm B\).
Thus since \(5\F\) covers almost all of \(E\) and
\[\lm{\bigcup5\F_{>n}\cap B}\leq\f\lambda2\lm{B}\]
we must have
\[\lm{\bigcup5\F_{\leq n}\cap B}\geq\f\lambda2\lm{B}.\]
\item
Let \(B\in\B_n^{1,0}\).
It suffices to show that \(5\F_{\leq n}\) covers \(E\cap B\).
By the construction of \(\G\) using \Cref{lem_boxing} almost all of \(B\cap E\) is covered by the union of all \(G\in\G\) with \(G\subset B\) and \(\diam G<2^n\).
Thus it suffices to show for each such \(G\) that \(G\cap E\) is contained in \(5\F_{\leq n}\).
Take \(k\leq n\) with \(\diam G\in[\f12,1)2^k\).
If \(G\cap E\) is not contained in \(\bigcup5\F_{k<\cdot\leq n}\) then \(G\in\G_k\) and thus by \Cref{lem_FncoversGn} we have \(G\subset\bigcup5\F_k\).
\end{enumerate}
\end{proof}

Denote \(\tilde\B=\B^0\cup\B^{1,0}\) so that \(\B=\tilde\B\cup\B^{1,1}\).
Then by \Cref{lem_boundaryofunion} we have 
\[\mb{\bigcup\B}\subset\mb{\bigcup\tilde\B}\cup\bigl(\mb{\bigcup\B^{1,1}}\setminus\mc{\bigcup\tilde\B}\bigr).\]
Note that for a finite union of balls the topological and measure theoretical notions agree up to a set of \(d-1\) dimensional measure zero.
By \Cref{lem_ballsreachinside} for every \(B\in\B^{1,1}\) there is an \(F\in\F\) with \(\diam F>\diam B\)
that intersects \((1-8^{-\f{d+1}2}d^{-\f{d+4}2})B\).
Because \(F\) came about using \Cref{lem_boxing}, it is further contained in a ball \(B_F\in\B\).
Since \(\diam B<\diam B_F\) we have \(B\neq B_F\).
For each \(F\in\F\) denote by \(\B(F)\) the set of \(B\in\B\) with \(\diam B<\diam F\)
such that \(F\) intersects \((1-8^{-\f{d+1}2}d^{-\f{d+4}2})B\).
Then
\begin{align*}
\partial\bigcup\B^{1,1}\setminus\cl{\bigcup\tilde\B}
&\subset\partial\bigcup\B^{1,1}\setminus\bigcup\B\\
&\subset\bigcup_{F\in\F}\partial\bigcup(\B^{1,1}\cap\B(F))\setminus\bigcup\B\\
&\subset\bigcup_{F\in\F}\partial\bigcup(\B^{1,1}\cap\B(F))\setminus(\bigcup\B(F)\cup B_F)\\
&\subset\bigcup_{F\in\F}\partial\bigcup\B(F)\setminus(\bigcup\B(F)\cup B_F)\\
&=\bigcup_{F\in\F}\partial\bigcup\B(F)\setminus B_F\\
&\subset\bigcup_{F\in\F}\partial(F\cup\bigcup\B(F)).
\end{align*}
Thus \Cref{pro_levelsets_finite_g} implies
\[
\sm{\partial\bigcup\B^{1,1}\setminus\cl{\bigcup\tilde\B}}
\lesssim\sum_{F\in\F}\sm{\partial F}
\]
Recall that we made \((3/4)^{\f1d}\F\) disjoint
and that by \Cref{lem_boxing} for each \(F\in\F\) we have \(F\subset\bigcup\B\) 
and \(\lm{F\cap E}=\lm F/2\).
Thus \(\lm{(\f34)^{\f1d}F\cap E}\in[\f14,\f34]\lm F\)
and so by \Cref{lem_isoperimetricconsequence} we can conclude
\begin{align}
\nonumber
\sum_{F\in\F}\sm{\partial F}
&=(4/3)^{\f{d-1}d}
\sum_{F\in\F}\sm{\partial(3/4)^{\f1d}F}
\\
\nonumber
&\lesssim
\sum_{F\in\F}\sm{\mb E\cap(3/4)^{\f1d}F}
\\
\label{eq_boundsumF}
&\leq
\sm{\mb E\cap\bigcup\B}
.
\end{align}

It remains to prove
\begin{equation}\label{eq_boundtildeB}
\sm{\partial\bigcup\tilde\B}\lesssim\lambda^{-\f{d-1}d}\sum_{F\in\F}\sm{\partial F}.
\end{equation}
For \(n\in\mathbb{Z}\) denote by \(\tilde\B_n\) the set of balls \(B\in\tilde\B\) with \(\diam B\in[\f12,1)2^n\).
Let \(B\in\tilde\B_n\) and let \(F\in\F_{\leq n}\) be a ball such that \(5F\) intersects \(B\).
Then \(F\subset21B\).
By \Cref{lem_smallFcoverB0,lem_smallFcoverB10} this means
\begin{equation}\label{eq_atleastlambda2cover}
\f\lambda2\lm B\leq\lm{B\cap\bigcup5\F_{\leq n}}\leq\sum_{F\in\F_{\leq n},F\subset21B}\lm{5F\cap B}.
\end{equation}
For each \(k\in\mathbb{Z}\) denote by \(\tilde\F_k\) the set of balls \(F\in\F\) with \(\diam F\in[\f12,1)2^k\lambda^{\f1d}\).
We make a case distinction.
If there is a \(k\geq n\) and a ball \(F\in\tilde\F_k\) with \(F\subset21B\) we have
\begin{align}
\nonumber\sm{\partial B}&=\f{\sm{\partial B}}{\sm{\partial F}}\sm{\partial F}\\
\nonumber&\leq2^{2(d-1)}\f{2^{n(d-1)}}{\lambda^{\f{d-1}d}2^{k(d-1)}}\sm{\partial F}\\
\label{eq_caseFbig}&\sim2^{(n-k)(d-1)}\lambda^{-\f{d-1}d}\sm{\partial F},
\end{align}
and we are done with this case for the moment.
Now assume all balls \(F\in\F\) with \(F\subset21B\) are contained in \(\tilde\F_{<n}\).
Then by \cref{eq_atleastlambda2cover} we have
\begin{align}
\nonumber\sm{\partial B}
&\leq2
\sum_{k<n}\sum_{F\in\tilde\F_k,F\subset21B}
\f{\lm{5F\cap B}}{\lambda\lm{B}}\sm{\partial B}
\\
\nonumber
&\lesssim
\sum_{k<n}\sum_{F\in\tilde\F_k,F\subset21B}
\Bigl(\f{\lm F}{\lambda\lm{B}}\Bigr)^{\f1d}
\lambda^{-\f{d-1}d}
\Bigl(\f{\lm F}{\lm{B}}\Bigr)^{\f{d-1}d}
\sm{\partial B}
\\
\label{eq_dbtodf2}
&\sim
\sum_{k<n}\sum_{F\in\tilde\F_k,F\subset21B}2^{k-n}\lambda^{-\f{d-1}d}\sm{\partial F}.
\end{align}

If \(d=1\) then \Cref{pro_levelsets_finite_l_local} is straightforward to prove directly,
so it suffices to consider \(d\geq2\).
There we can combine \cref{eq_caseFbig,eq_dbtodf2} into
\[\sm{\partial B}\lesssim\lambda^{-\f{d-1}d}\sum_k2^{-|k-n|}\sum_{F\in\tilde\F_k,F\subset21B}\sm{\partial F}\]
for simplicity.
This estimate can be seen as a way to distribute \(\sm{\partial B}\) over the balls \(F\) that it contains. 
The next step will be to turn the dependence around, and see for a fixed ball \(F\in\F\) for how much of \(\sm{\mb{\bigcup\tilde\B}}\) it is responsible.
Since \(\tilde\B_n\) is finite we have
\begin{align*}
\sm{\mb{\bigcup\tilde\B_n}}&=\sum_{B\in\tilde\B_n}\sm{\partial B\cap\mb{\bigcup\tilde\B_n}}\\
\intertext{and we multiply each summand by a number bounded from below according to \cref{eq_dbtodf2}}
\sm{\mb{\bigcup\tilde\B_n}}
&\lesssim\sum_{B\in\tilde\B_n}\f{\sm{\partial B\cap\mb{\bigcup\tilde\B_n}}}{\sm{\partial B}}\sum_k\sum_{F\in\tilde\F_k,F\subset21B}2^{-|k-n|}\lambda^{-\f{d-1}d}\sm{\partial F}\\
&=\lambda^{-\f{d-1}d}\sum_k2^{-|k-n|}\sum_{F\in\tilde\F_k}\sm{\partial F}\sum_{B\in\tilde\B_n,21B\supset F}\f{\sm{\partial B\cap\mb{\bigcup\tilde\B_n}}}{\sm{\partial B}}.
\end{align*}
We have reorganized \(\mb{\bigcup\tilde\B_n}\) according to the balls in \(\F\). 
We want to bound the contribution of each ball \(F\in\F\) uniformly.
For each \(F\in\tilde\F_k\) for which there is a ball \(B\in\tilde\B_n\) with \(F\subset21B\), denote by \(B_F\) a largest such \(B\).
Then for all \(B\in\tilde\B_n\) with \(F\subset21B\) have \(B\subset3B_F\). 
Thus,
\[
\sum_{B\in\tilde\B_n,21B\supset F}\f{\sm{\partial B\cap\mb{\bigcup\tilde\B_n}}}{\sm{\partial B}}
\lesssim\sum_{B\in\tilde\B_n,B\subset63B_F}\f{\sm{\partial B\cap\mb{\bigcup\tilde\B_n}}}{\sm{\partial B_F}}
\]
which is uniformly bounded according to \Cref{cla_largeboundaryinball}.
Therefore we can conclude
\begin{align*}
\sm{\mb{\bigcup\tilde\B_n}}
&\lesssim\lambda^{-\f{d-1}d}\sum_k2^{-|k-n|}\sum_{F\in\tilde\F_k}\sm{\partial F}.\\
\intertext{
So the interaction between the scales is small enough
that we can just sum over all scales and obtain
}
\sm{\mb{\bigcup\tilde\B}}&\leq\sum_n\sm{\mb{\bigcup\tilde\B_n}}\\
&\lesssim\lambda^{-\f{d-1}d}\sum_k\sum_n2^{-|k-n|}\sum_{F\in\tilde\F_k}\sm{\partial F}\\
&\lesssim\lambda^{-\f{d-1}d}\sum_k\sum_{F\in\tilde\F_k}\sm{\partial F}\\
&=\lambda^{-\f{d-1}d}\sum_{F\in\F}\sm{\partial F},
\end{align*}
and we have proven \cref{eq_boundtildeB} which was all that remained to finish the proof of \Cref{pro_levelsets_finite_l_local}.
\end{proof}

\nocite{*}
\bibliographystyle{plain}
\bibliography{bib}

\begin{thebibliography}{10}

\bibitem{MR2868961}
J.~M. Aldaz, L.~Colzani, and J.~P\'{e}rez~L\'{a}zaro.
\newblock Optimal bounds on the modulus of continuity of the uncentered
  {H}ardy-{L}ittlewood maximal function.
\newblock {\em J. Geom. Anal.}, 22(1):132--167, 2012.

\bibitem{MR2276629}
J.~M. Aldaz and J.~P\'{e}rez~L\'{a}zaro.
\newblock Functions of bounded variation, the derivative of the one dimensional
  maximal function, and applications to inequalities.
\newblock {\em Trans. Amer. Math. Soc.}, 359(5):2443--2461, 2007.

\bibitem{MR2356061}
J.~M. Aldaz and J.~P\'{e}rez~L\'{a}zaro.
\newblock Boundedness and unboundedness results for some maximal operators on
  functions of bounded variation.
\newblock {\em J. Math. Anal. Appl.}, 337(1):130--143, 2008.

\bibitem{MR2867756}
Anders Bj\"{o}rn and Jana Bj\"{o}rn.
\newblock {\em Nonlinear potential theory on metric spaces}, volume~17 of {\em
  EMS Tracts in Mathematics}.
\newblock European Mathematical Society (EMS), Z\"{u}rich, 2011.

\bibitem{MR1724375}
Stephen~M. Buckley.
\newblock Is the maximal function of a {L}ipschitz function continuous?
\newblock {\em Ann. Acad. Sci. Fenn. Math.}, 24(2):519--528, 1999.

\bibitem{MR3809456}
Emanuel Carneiro, Renan Finder, and Mateus Sousa.
\newblock On the variation of maximal operators of convolution type {II}.
\newblock {\em Rev. Mat. Iberoam.}, 34(2):739--766, 2018.

\bibitem{MR3091605}
Emanuel Carneiro and Kevin Hughes.
\newblock On the endpoint regularity of discrete maximal operators.
\newblock {\em Math. Res. Lett.}, 19(6):1245--1262, 2012.

\bibitem{MR3624402}
Emanuel Carneiro and Jos\'{e} Madrid.
\newblock Derivative bounds for fractional maximal functions.
\newblock {\em Trans. Amer. Math. Soc.}, 369(6):4063--4092, 2017.

\bibitem{MR3695894}
Emanuel Carneiro, Jos\'{e} Madrid, and Lillian~B. Pierce.
\newblock Endpoint {S}obolev and {BV} continuity for maximal operators.
\newblock {\em J. Funct. Anal.}, 273(10):3262--3294, 2017.

\bibitem{MR2431055}
Emanuel Carneiro and Diego Moreira.
\newblock On the regularity of maximal operators.
\newblock {\em Proc. Amer. Math. Soc.}, 136(12):4395--4404, 2008.

\bibitem{MR3063097}
Emanuel Carneiro and Benar~F. Svaiter.
\newblock On the variation of maximal operators of convolution type.
\newblock {\em J. Funct. Anal.}, 265(5):837--865, 2013.

\bibitem{MR3409135}
Lawrence~C. Evans and Ronald~F. Gariepy.
\newblock {\em Measure theory and fine properties of functions}.
\newblock Textbooks in Mathematics. CRC Press, Boca Raton, FL, revised edition,
  2015.

\bibitem{lecturenoteshajlasz}
Piotr Haj\l{}asz.
\newblock Jyv\"askyl\"a lectures on {S}obolev spaces.
\newblock \url{http://www.pitt.edu/~hajlasz/Notatki/jyv-98b.pdf}.

\bibitem{MR2550181}
Piotr Haj\l{}asz and Jan Mal\'{y}.
\newblock On approximate differentiability of the maximal function.
\newblock {\em Proc. Amer. Math. Soc.}, 138(1):165--174, 2010.

\bibitem{MR2041705}
Piotr Haj\l{}asz and Jani Onninen.
\newblock On boundedness of maximal functions in {S}obolev spaces.
\newblock {\em Ann. Acad. Sci. Fenn. Math.}, 29(1):167--176, 2004.

\bibitem{MR3319617}
Toni Heikkinen, Juha Kinnunen, Janne Korvenp\"{a}\"{a}, and Heli Tuominen.
\newblock Regularity of the local fractional maximal function.
\newblock {\em Ark. Mat.}, 53(1):127--154, 2015.

\bibitem{MR1469106}
Juha Kinnunen.
\newblock The {H}ardy-{L}ittlewood maximal function of a {S}obolev function.
\newblock {\em Israel J. Math.}, 100:117--124, 1997.

\bibitem{MR2400262}
Juha Kinnunen, Riikka Korte, Nageswari Shanmugalingam, and Heli Tuominen.
\newblock Lebesgue points and capacities via the boxing inequality in metric
  spaces.
\newblock {\em Indiana Univ. Math. J.}, 57(1):401--430, 2008.

\bibitem{MR1650343}
Juha Kinnunen and Peter Lindqvist.
\newblock The derivative of the maximal function.
\newblock {\em J. Reine Angew. Math.}, 503:161--167, 1998.

\bibitem{MR1979008}
Juha Kinnunen and Eero Saksman.
\newblock Regularity of the fractional maximal function.
\newblock {\em Bull. London Math. Soc.}, 35(4):529--535, 2003.

\bibitem{MR3310075}
Ond\v{r}ej Kurka.
\newblock On the variation of the {H}ardy-{L}ittlewood maximal function.
\newblock {\em Ann. Acad. Sci. Fenn. Math.}, 40(1):109--133, 2015.

\bibitem{MR3800463}
Hannes Luiro.
\newblock The variation of the maximal function of a radial function.
\newblock {\em Ark. Mat.}, 56(1):147--161, 2018.

\bibitem{2017arXiv171007233L}
Hannes {Luiro} and Jos{\'e} {Madrid}.
\newblock {The Variation of the Fractional Maximal Function of a Radial
  Function}.
\newblock {\em arXiv e-prints}, page arXiv:1710.07233, October 2017.

\bibitem{MR3592548}
Jos\'{e} Madrid.
\newblock Sharp inequalities for the variation of the discrete maximal
  function.
\newblock {\em Bull. Aust. Math. Soc.}, 95(1):94--107, 2017.

\bibitem{2019arXiv190904375R}
Jo{\~a}o P.~G. {Ramos}, Olli {Saari}, and Julian {Weigt}.
\newblock {Weak differentiability for fractional maximal functions of general
  $L^{p}$ functions on domains}.
\newblock {\em arXiv e-prints}, page arXiv:1909.04375, September 2019.

\bibitem{MR1898539}
Hitoshi Tanaka.
\newblock A remark on the derivative of the one-dimensional
  {H}ardy-{L}ittlewood maximal function.
\newblock {\em Bull. Austral. Math. Soc.}, 65(2):253--258, 2002.

\end{thebibliography}

\end{document}